\documentclass[reqno]{amsart}

\usepackage{mathtools,amssymb,mathrsfs}
\usepackage[english]{babel}
\usepackage[utf8]{inputenc}
\usepackage[linktocpage,colorlinks=false]{hyperref}
\usepackage{tikz-cd}
\usepackage{upgreek}
\usepackage{textcomp}
\usepackage{amsmath}
\usepackage{graphicx}
\usepackage{float}
\usepackage{amsthm}
\usepackage{amsfonts}
\usepackage{enumitem}
\usepackage[symbol]{footmisc}

\setcounter{tocdepth}{1}
\numberwithin{equation}{section}

\usepackage{tikz-cd, mathtools}

\newtheorem{thm}{Theorem}[section]
\newtheorem{cor}[thm]{Corollary}
\newtheorem{lem}[thm]{Lemma}
\newtheorem{prop}[thm]{Proposition}
\newtheorem{conj}[thm]{Conjecture}
\theoremstyle{definition}
\newtheorem{defn}[thm]{Definition}
\newtheorem{exmp}[thm]{Example}
\newtheorem{assm}[thm]{Assumption}
\newtheorem{rem}[thm]{Remark}
\DeclareMathOperator{\Hom}{Hom}
\DeclareMathOperator{\HC}{HC}
\DeclareMathOperator{\CH}{CH}
\DeclareMathOperator{\pr}{pr}
\newcommand{\wh}{\widehat}
\newcommand{\ot}{\otimes}
\newcommand{\vphi}{\varphi}
\newcommand{\fd}[2]{{}_{#1}\hspace{-0.33333em}\times_{#2}\hspace{-0.16667em}}

\numberwithin{equation}{section}

\begin{document}
	
    \author{Yi Wang}
    \address{Department of Mathematics, Purdue University, West Lafayette, IN 47907, USA}
    \email{wang6206@purdue.edu}
	\title[cyclic homology, $S^1$-equivariant homology, string topology]{A cocyclic construction of $S^1$-equivariant homology and application to string topology}


	\begin{abstract}
		Given a space with a circle action, we study certain cocyclic chain complexes and prove a theorem relating cyclic homology to $S^1$-equivariant homology, in the spirit of celebrated work of Jones. As an application, we describe a chain level refinement of the gravity algebra structure on the (negative) $S^1$-equivariant homology of the free loop space of a closed oriented smooth manifold, based on work of Irie on chain level string topology and work of Ward on an $S^1$-equivariant version of operadic Deligne's conjecture.
	\end{abstract}
	
	\maketitle

	
	\section{Introduction}
	Let $M$ be a closed oriented smooth manifold and $\mathcal{L}M=C^\infty(S^1,M)$ be the smooth free loop space of $M$. In a seminal paper \cite{Chas-Sullivan string topology} (and a sequal \cite{Chas-Sullivan gravity}), Chas-Sullivan discovered rich algebraic structures on the ordinary homology and $S^1$-equivariant homology of $\mathcal{L}M$, initiating the study of string topology. In particular, there is a Batalin-Vilkovisky (BV) algebra structure on (shifted) $H_*(\mathcal{L}M)$ (\cite[Theorem 5.4]{Chas-Sullivan string topology}), which naturally induces a gravity algebra structure on (shifted) $H^{S^1}_*(\mathcal{L}M)$ (\cite[Section 6]{Chas-Sullivan string topology}, \cite[page 18]{Chas-Sullivan gravity}).
	
	The goal of this paper is to describe a chain level refinement of the string topology gravity algebra, and compare it with an algebraic counterpart related to the de Rham dg algebra $\Omega(M)$. Along the way we also obtain results on the relation between cyclic homology and $S^1$-equivariant homology, and an $S^1$-equivariant version of Deligne's conjecture.
	
	In spirit, this paper may be compared with work of Westerland \cite{Westerland}. Westerland gave a homotopy theoretic generalization of the gravity operations on the (negative) $S^1$-equivariant homology of $\mathcal{L}M$, whereas we describe a chain level refinement.

	\subsubsection*{Cyclic homology and $S^1$-equivariant homology}
	
	The close connection between cyclic homology (algebra) and $S^1$-equivariant homology (topology) was first systematically studied by Jones in \cite{Jones cyclic}. One of the main theorems in that paper (\cite[Theorem 3.3]{Jones cyclic}) says that the singular chains  $\{S_k(X)\}_{k\geq0}$ of an $S^1$-space $X$ can be made into a cyclic module, such that there are natural isomorphisms between three versions of cyclic homology (positive, periodic, negative) of $\{S_k(X)\}_{k\geq0}$ and three versions of $S^1$-equivariant homology of $X$, in a way compatible with long exact sequences. 
	
	The first result in this paper is a theorem ``cyclic dual'' to Jones' theorem. As far as the author knows, such a result did not appear in the literature.
	\begin{thm}[See Theorem {\ref{thm:cocyclic topological isom}}]\label{thm:main result 1}
		Let $X$ be a topological space with an $S^1$-action. Then $\{S_*(X\times\Delta^k)\}_{k\geq0}$ can be made into a cocyclic chain complex, such that there are natural isomorphisms between three versions of cyclic homology of $\{S_*(X\times\Delta^k)\}_{k\geq0}$ and three versions of $S^1$-equivariant homology of $X$, in a way compatible with long exact sequences.
	\end{thm}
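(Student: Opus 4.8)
The plan is to build the cocyclic chain complex by hand and then compute its three cyclic homologies through a resolution argument that collapses the $\Delta^\bullet$-direction, reducing everything to Jones' computation. First I would make $\{X\times\Delta^k\}_{k\ge0}$ into a cocyclic space: the cosimplicial cofaces and codegeneracies are $\mathrm{id}_X$ times the usual coface inclusions $\delta^i\colon\Delta^{k-1}\to\Delta^k$ and codegeneracy projections $\sigma^i\colon\Delta^{k+1}\to\Delta^k$ of the cosimplicial space $\Delta^\bullet$. The cocyclic operator is where the circle action enters: writing a point of $\Delta^k$ in barycentric coordinates $(t_0,\dots,t_k)$ with $\sum t_i=1$, set
\[
\tau^k(x;t_0,t_1,\dots,t_k)=\bigl(t_0\cdot x;\,t_1,\dots,t_k,t_0\bigr),
\]
where $t_0\cdot x$ denotes the $\s$-action through the angle $t_0\in\mathbb R/\mathbb Z$. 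Iterating $k+1$ times rotates the coordinates back and acts on $x$ by $t_0+\dots+t_k=1\in\mathbb R/\mathbb Z$, hence $(\tau^k)^{k+1}=\mathrm{id}$; the remaining compatibilities with cofaces and codegeneracies are a direct check. Applying the (covariant) singular chains functor $S_*$ levelwise then produces the cocyclic chain complex $\{S_*(X\times\Delta^k)\}_{k\ge0}$, with internal differential the singular boundary.

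Next I would set up the homological algebra. From a cocyclic chain complex one forms, via Connes' $(b,B)$-type bicomplex in the cocyclic direction with the singular boundary as internal differential, three total complexes — positive, periodic and negative — whose homologies are the three versions of cyclic homology, and these sit in Connes' periodicity (SBI) long exact sequence. On the topological side the three versions of $\s$-equivariant homology (the Borel homology $H^{\s}_*(X)$, its periodic/Tate variant, and the negative variant) sit in the Gysin long exact sequence. The goal is to produce natural isomorphisms intertwining SBI with Gysin.

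The core of the argument is a reduction in the $\Delta^\bullet$-direction. For fixed singular degree $n$, the cosimplicial abelian group $k\mapsto S_n(X\times\Delta^k)$ with coboundary $\sum_i(-1)^i d^i$ is a resolution of $S_n(X)=S_n(X\times\Delta^0)$: since each $\Delta^k$ is contractible and the cofaces are induced by the acyclic cosimplicial space $\Delta^\bullet$, an acyclic-models (or explicit contracting cohomotopy) argument shows its normalization is concentrated in cosimplicial degree $0$, where it recovers $S_*(X)$. Feeding this into the cocyclic bicomplex collapses the $\Delta^\bullet$-direction and identifies each of the three total complexes, up to quasi-isomorphism, with the corresponding $(b,B)$-complex of the mixed complex $(S_*(X),\partial,B)$, where the induced Connes operator $B$ is exactly the $\s$-action (BV) operator on $S_*(X)$. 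This is precisely the mixed complex whose three cyclic homologies Jones identifies with the three $\s$-equivariant homologies of $X$, so the three isomorphisms follow; naturality is immediate from functoriality of $S_*$ and of the whole construction under $\s$-equivariant maps, and compatibility of the isomorphisms with the two long exact sequences follows because the comparison is realized by maps of bicomplexes respecting the $S$, $B$ and $I$ operators.

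I expect the main obstacle to be this last reduction: checking that the cocyclic operator $\tau$, which only records the $\s$-action through the ``first barycentric coordinate'' twist $t_0\cdot x$, induces on the reduced complex $S_*(X)$ exactly the topological $\s$-operator used by Jones, with all signs and the variance of the cosimplicial versus chain gradings correctly matched. Making the acyclic-models reduction compatible with the full cocyclic (not merely cosimplicial) structure — so that it descends to an isomorphism of all three total complexes and intertwines SBI with Gysin — is the delicate point; the definition of $\tau$ is arranged precisely so that this comparison goes through.
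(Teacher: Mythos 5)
Your cocyclic structure on $\{X\times\Delta^k\}_{k\geq0}$ is exactly the one the paper uses (your barycentric $\tau^k$ agrees with $\tau_k^{X\times\Delta^k}$ after the change of coordinates in the Remark following \eqref{eqn:cocyclic simplex}), and your collapse of the $\Delta^\bullet$-direction for the underlying complexes is sound: it is the paper's Lemma \ref{lem:cocyclic singular quasi}, proved by the spectral sequence of the column filtration, showing $\mathrm{pr}_0:(S^{X\Delta}_*,b)\to(S_*(X),\partial)$ is a quasi-isomorphism. The genuine gap is the sentence asserting that this collapse ``identifies each of the three total complexes, up to quasi-isomorphism, with the $(b,B)$-complex of $(S_*(X),\partial,B)$, where the induced Connes operator $B$ is exactly the $\s$-action operator.'' That identification is false at the strict level, and no choice of $\tau$ fixes it: Connes' operator $B=Ns(1-\lambda)$ on $\prod_k S_{*+k}(X\times\Delta^k)$ satisfies $\mathrm{pr}_0(B(c))=\sigma_0\tau_1(c_1)$, which depends on the component $c_1$ and not on $c_0$, so $\mathrm{pr}_0$ does not intertwine $B$ with $J$, and there is no strict section of $\mathrm{pr}_0$ doing so either. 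A quasi-isomorphism of the underlying $b$-complexes does not by itself transport the second differential of a mixed complex, and without controlling this you cannot conclude anything about the negative or periodic theories (which are not determined by $H_*(C,b)$ alone), nor get compatibility of SBI with Gysin.

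What the paper does to bridge this is the actual content of the proof and is absent from your proposal: it introduces $\infty$-morphisms of mixed complexes and constructs an $\infty$-quasi-isomorphism $f^\xi=\{f^\xi_n\}$ from $(S_*(X),\partial,J)$ to $(S^{X\Delta,\mathrm{nm}}_*,b,B)$, with $f^\xi_n(a)=\big((F^{X\times\Delta^k}\circ(\nu\times\mathrm{id}))_*(a\times\xi^n_k)\big)_k$, where the higher homotopies $\xi^n\in S^{\s\Delta,\mathrm{nm}}_{2n}$ are chosen inductively on the model case $X=\s$ to satisfy $b(\xi^0)=0$ and $b(\xi^n)=(J-B)(\xi^{n-1})$, using $H_{2n+1}(\s)=0$ for $n\geq1$ (Lemma \ref{lem:cocyclic infty quasi for S1} and Proposition \ref{prop:cocyclic exists mixed infty}). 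An $\infty$-quasi-isomorphism then induces isomorphisms on all three cyclic homologies by the $u$-adic spectral sequence (Lemma \ref{lem:mixed complex quasi}) and is natural for the long exact sequences (Lemma \ref{lem:cyclic long exact seq naturality}). You correctly flagged this comparison as ``the delicate point,'' but asserting that the definition of $\tau$ makes it go through is not a proof; you need either this tower of higher homotopies or an equivalent device, and your write-up contains no candidate for it.
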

	
	Jones dealt with the cyclic set $\{\mathrm{Map}(\Delta^k,X)\}_{k}$ and the cyclic module $\{S_k(X)\}_k$, while we deal with the cocyclic space $\{X\times\Delta^k\}_{k}$ and the cocyclic complex $\{S_*(X\times\Delta^k)\}_{k}$. It is in this sense that these two theorems are ``cyclic dual'' to each other. In the special case that $X$ is the free loop space of a topological space $Y$, Theorem \ref{thm:main result 1} may also be viewed as ``cyclic dual'' to a result of Goodwillie (\cite[Lemma V.1.4]{Goodwillie}). As does Jones' theorem, Theorem \ref{thm:main result 1} has the advantage that it works for all $S^1$-spaces.
	
	The cyclic structure on singular chains plays no role in Theorem \ref{thm:main result 1}; what matters is the cocyclic space. 
	Indeed, the main motivation for the author to seek for a result like Theorem \ref{thm:main result 1} is to study the $S^1$-equivariant homology of $\mathcal{L}M$, using a novel chain model of loop space homology defined via certain ``de Rham chains'', introduced by Irie \cite{Irie loop}. 
	
	\subsubsection*{Deligne's conjecture}
	
	What is called Deligne's conjecture asks whether there is an action of a certain chain model of the little disks operad on the Hochschild cochain complex of an associative algebra, inducing the Gerstenhaber algebra structure on Hochschild cohomology discovered by Gerstenhaber \cite{Gerstenhaber}. This conjecture, as well as some variations and generalizations, has been answered affirmatively by many authors, to whom we are apologetic not to list here. What is of most interest and importance to us is work of Ward \cite{Ward}.
	
	Ward (\cite[Theorem C]{Ward}) gave a general solution to the question when certain complex of cyclic (co)invariants admits an action of a chain model of the gravity operad, inducing the gravity algebra structure on cyclic cohomology. Recall that the gravity operad was introduced by Getzler \cite{Getzler gravity} and is the $S^1$-equivariant homology of the little disks operad. So Ward's result can be viewed as an $S^1$-equivariant version of operadic Deligne's conjecture (\cite[Corollary 5.22]{Ward}).
	
	The second result in this paper is an extension, in a special case, of Ward's theorem. To state our result, let $A$ be a dg algebra equipped with a symmetric, cyclic, bilinear form $\langle,\rangle:A\ot A\to\mathbb{R}$ of degree $m\in\mathbb{Z}$ satisfying Leibniz rule (see Example \ref{exmp:end operad}). Then $\langle,\rangle$ induces a dg $A$-bimodule map $\theta:A\to A^\vee[m]$, and hence a cochain map $\Theta:\CH(A,A)\to\CH(A,A^\vee[m])$ between Hochschild cochains. Let $\CH_{\mathrm{cyc}}(A,A^\vee[m])$ be the subcomplex of cyclic invariants in $\CH(A,A^\vee[m])$. Let $\mathsf{M}_{\circlearrowleft}$ be the chain model of the gravity operad that Ward constructed (see also Example \ref{exmp:operads}\eqref{item:example operad 3}).
	\begin{thm}[See Corollary \ref{cor:homotopy gravity action}]\label{thm:main result 2}
		Given $A,\langle,\rangle,\theta,\Theta$ as above, there is an action of $\mathsf{M}_{\circlearrowleft}$ on $\Theta^{-1}(\CH_{\mathrm{cyc}}(A,A^\vee[m]))$, giving rise to a structure of a gravity algebra up to homotopy. If $\theta$ is a quasi-isomorphism and $\Theta$ restricts to a quasi-isomrphism $\Theta^{-1}(\CH_{\mathrm{cyc}}(A,A^\vee[m]))\to\CH_{\mathrm{cyc}}(A,A^\vee[m])$, this descends to a gravity algebra structure on the cyclic cohomology of $A$, which is compatible with the BV algebra structure on Hochschild cohomology.
	\end{thm}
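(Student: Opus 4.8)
The plan is to realize the data $(A,\langle,\rangle,\theta)$ as an input to Ward's construction and then to transport his gravity action along $\Theta$. The symmetry, cyclicity, and Leibniz properties of $\langle,\rangle$ are exactly what make $\theta\colon A\to A^\vee[m]$ a morphism of dg $A$-bimodules and, equivalently, equip the endomorphism operad of $A$ (Example \ref{exmp:end operad}) with a cyclic inner product. Under this identification $\mathrm{CH}(A,A^\vee[m])\cong \Hom(A^{\ot\bullet+1},\mathbb{R})[m]$ carries the cocyclic $\mathbb{Z}/(\bullet+1)$-action dual to the cyclic bar complex, with $\mathrm{CH}_{\mathrm{cyc}}(A,A^\vee[m])$ its subcomplex of invariants. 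The first step is therefore to record these identifications and to verify that $\Theta$, being induced by the bimodule map $\theta$, is a morphism of the relevant structures, so that $\Theta^{-1}(\mathrm{CH}_{\mathrm{cyc}}(A,A^\vee[m]))$ is an honest subcomplex of the brace algebra $\mathrm{CH}(A,A)$.

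Next I would invoke Ward's Theorem C. The brace (Gerstenhaber--Deligne) operations on $\mathrm{CH}(A,A)$, reweighted by the fundamental cellular chains of $\mathsf{M}_{\circlearrowleft}$, should assemble into operadic operations compatible with the cyclic structure: one defines the action of an operation in $\mathsf{M}_{\circlearrowleft}(n)$ by Ward's explicit formulas as a signed sum of iterated braces, and the key lemma is that these operations intertwine the Hochschild differential and carry cyclically invariant inputs to cyclically invariant outputs, i.e.\ preserve $\Theta^{-1}(\mathrm{CH}_{\mathrm{cyc}})$. I expect this closure to be the main obstacle. Ward's statement presupposes a genuine cyclic operad, morally the case of $\theta$ invertible, whereas here $\theta$ need not be an isomorphism, so the cyclic operator is only defined after applying $\theta$; one must check that the cyclic symmetry of Ward's cells together with the cyclicity of $\langle,\rangle$ forces the image under $\Theta$ of each operadic output to be invariant, even though the inputs are manipulated on the ``$\mathrm{CH}(A,A)$ side.'' This should be a direct but sign- and bookkeeping-heavy transcription of Ward's argument to the preimage $\Theta^{-1}(\mathrm{CH}_{\mathrm{cyc}})$.

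Granting the action, the phrase ``gravity algebra up to homotopy'' is automatic, since $\mathsf{M}_{\circlearrowleft}$ is a chain model of the gravity operad and passing to the homology operad produces the gravity operad, hence a strict gravity algebra on $H^*(\Theta^{-1}(\mathrm{CH}_{\mathrm{cyc}}))$. For the descent statement, the hypotheses that $\theta$ be a quasi-isomorphism and that $\Theta$ restrict to a quasi-isomorphism $\Theta^{-1}(\mathrm{CH}_{\mathrm{cyc}})\to\mathrm{CH}_{\mathrm{cyc}}(A,A^\vee[m])$ identify $H^*(\Theta^{-1}(\mathrm{CH}_{\mathrm{cyc}}))$ with the cyclic cohomology of $A$, transporting the gravity algebra there. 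Finally, compatibility with the BV algebra structure on Hochschild cohomology follows by comparing the gravity operations with the underlying Gerstenhaber/brace operations through the Connes-type exact sequence relating Hochschild and cyclic cohomology, exactly as in the operadic Deligne statement (\cite[Corollary 5.22]{Ward}): the gravity bracket is the $\s$-equivariant shadow of the Gerstenhaber bracket, and the chain-level action constructed above refines this compatibility.
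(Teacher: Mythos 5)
Your overall strategy coincides with the paper's: reduce everything to Ward's operads $\mathsf{B}_{\circlearrowleft}\subset\mathsf{M}_{\circlearrowleft}$ acting by signed sums of iterated braces, isolate the closure of $\Theta^{-1}(\mathrm{CH}_{\mathrm{cyc}}(A,A^\vee[m]))$ under these operations as the key lemma, and then handle the descent and BV-compatibility by standard arguments (Lemma \ref{lem:BV induce gravity} and the Connes exact sequence). You have correctly located where the difficulty sits. The problem is that at exactly that point you write that the closure ``should be a direct but sign- and bookkeeping-heavy transcription of Ward's argument,'' and this is where the proposal has a genuine gap: there is no argument in \cite{Ward} to transcribe. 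The paper points this out explicitly in Example \ref{exmp:cyclic brace algebra} --- the corresponding claim in \cite[Theorem 5.5]{Ward} is referred back to \cite[Proposition 3.10]{Ward}, but no direct proof is given there even in the strictly cyclic case, let alone in the present setting where $\theta$ need not be invertible and the cyclic operator on $\mathcal{E}nd_A$ is only defined ``after applying $\theta$.''

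The actual content of the paper's proof is Lemma \ref{lem:cyclic brace action two equiv} together with Lemma \ref{lem:cyclic brace invariant}. The first rewrites the cyclic brace operation $(\kappa_n\circ\rho_n)(T_{\circlearrowleft})$ --- a sum over choices of root vertex --- as $\bar{\kappa}_n\circ r_n\circ\nu_k^s$, i.e.\ a sum over trees with tails attached (matching the arities of the inputs) and over choices of one of those \emph{tails} as the root; this is the combinatorial identity $\bigcup_{T\in\mathcal{R}_1(T_{\circlearrowleft})}\mathcal{T}(T,k)=\bigcup_{T_{\circlearrowleft}^\prime\in\mathcal{T}(T_{\circlearrowleft},k)}\mathcal{R}_0(T_{\circlearrowleft}^\prime)$. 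The second is the real computation: for a fixed tree with tails, moving the root from the tail $l_1$ to the first non-root tail $l_2$ is implemented by cutting the tree along the unique path $v_{i_1},\dots,v_{i_k}$ joining them and applying, alternately, weak cyclic invariance of each $\theta\circ f_{i_j}$ and the (anti-)symmetry of $\langle,\rangle$, $k$ times each, to push the output slot of the pairing around one step of the cyclic order; summing over all tails then produces exactly the norm operator $N$, whence $\lambda$-invariance of $\theta$ applied to the total output. Without this mechanism (or an equivalent one) the first sentence of the theorem is not established. The remaining parts of your proposal --- the extension from $\mathsf{B}_{\circlearrowleft}$ to $\mathsf{M}_{\circlearrowleft}$ (which in the paper needs only the additional observation that $\mu_A$ itself is weakly cyclic invariant, i.e.\ \eqref{eqn:<> cyclic invariant}), the passage to homology, and the identification of the induced gravity brackets with those coming from the BV structure via iterated cup-brace expressions and tree contraction --- are consistent with the paper and would go through once the closure lemma is in place.
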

	
	Here the BV algebra structure on the Hochschild cohomology of $A$ (when $\theta$ is a quasi-isomorphism) is well-known (e.g. Menichi \cite[Theorem 18]{Menichi cyclic}), where the BV operator is given by Connes' operator (Example \ref{exmp:cocyclic complex}). By compatibility with a BV algebra structure we mean the content of Lemma \ref{lem:BV induce gravity}. Note that Ward's original theorem only applies to the situation that $\theta$ is an isomorphism (\cite[Corollary 6.2]{Ward}). 
	
	\subsubsection*{Chain level structures in $S^1$-equivariant string topology}
	Let us say more about Irie's work \cite{Irie loop}. Using his chain model and results of Ward (\cite[Theorem A, Theorem B]{Ward}), Irie obtained an operadic chain level refinement of the string topology BV algebra, and compared it with a solution to the ordinary Deligne's conjecture via a chain map defined by iterated integrals of differential forms. 
	
	The third result in this paper is a similar story in the $S^1$-equivariant context. Note that the string topology BV algebra induces gravity algebra structures on two versions (positive i.e. ordinary, and negative) of $S^1$-equivariant homology of $\mathcal{L}M$ (Example \ref{exmp:string topology gravity}).
	
	\begin{thm}[See Theorem \ref{thm:chain level gravity string topology})]\label{thm:main result 3}
		For any closed oriented $C^\infty$-manifold $M$, there exists a chain complex $\tilde{\mathcal{O}}^{\mathrm{cyc}}_M$ satisfying the following properties. Firstly, the homology of $\tilde{\mathcal{O}}^{\mathrm{cyc}}_M$ is isomorphic to the negative $S^1$-equivariant homology of $\mathcal{L}M$, and $\tilde{\mathcal{O}}^{\mathrm{cyc}}_M$ admits an action of $\mathsf{M}_{\circlearrowleft}$ (hence an up-to-homotopy gravity algebra structure) which lifts the gravity algebra structure mentioned above. Secondly, there is a morphism of $\mathsf{M}_{\circlearrowleft}$-algebras 
		\begin{equation}\label{eqn:main result morphism}
			\tilde{\mathcal{O}}^{\mathrm{cyc}}_M\to\Theta^{-1}(\CH_{\mathrm{cyc}}(\Omega(M),\Omega(M)^\vee[-\dim M]))
		\end{equation}
		which is induced by iterated integrals of differential forms, where the structure on right-hand side follows from Theorem \ref{thm:main result 2} and $\Theta$ comes from the Poincar\'e pairing. At homology level, the morphism \eqref{eqn:main result morphism} descends to a map (part of arrow 4 below) which fits into a commutative diagram of gravity algebra homomorphisms			\begin{equation}\label{eqn:main result diagram}				\begin{tikzcd}					
				A\arrow[r,"1"]\arrow[d,"2"] & B\arrow[d,"3"] \\					C\arrow[r,"4"] & D.			
			\end{tikzcd}			
		\end{equation}		    Here $A$ is the $S^1$-equivariant homology of $\mathcal{L}M$, $B$ is the negative cyclic cohomology of $\Omega(M)$, $C$ is the negative $S^1$-equivariant homology of $\mathcal{L}M$, $D$ is the cyclic cohomology of $\Omega(M)$. Arrows 1, 4 are defined by iterated integrals on free loop space, and arrow 2 (resp. 3) is the connecting map in the tautological long exact sequence for $S^1$-equivariant homology theories (resp. cyclic homology theories).
	\end{thm}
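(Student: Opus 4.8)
The plan is to assemble the statement from the two preceding theorems together with Irie's chain-level model and Chen's iterated integrals. First I would construct $\tilde{\mathcal{O}}^{\mathrm{cyc}}_M$ by feeding Irie's de Rham chain complex of $\mathcal{L}M$ into the cocyclic machine of Theorem \ref{thm:main result 1}. The remark following that theorem is essential here: since only the cocyclic \emph{space} $\{\mathcal{L}M\times\Delta^k\}_k$ enters, and no cyclic structure on singular chains is used, the construction goes through with singular chains replaced by the de Rham chains of Section \ref{section:differentiable space}. Taking the negative cyclic complex of the resulting cocyclic chain complex produces $\tilde{\mathcal{O}}^{\mathrm{cyc}}_M$, and the de Rham analogue of Theorem \ref{thm:cocyclic topological isom} identifies its homology with the negative $\s$-equivariant homology of $\mathcal{L}M$.

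For the gravity action I would apply the mechanism of Theorem \ref{thm:main result 2} to Irie's chain-level model of the string topology operations on $\mathcal{L}M$: the chain-level product and higher operations supply the algebra structure, while the Poincar\'e pairing supplies the cyclic symmetric bilinear form of degree $-\dim M$. Feeding this into the cocyclic construction and Corollary \ref{cor:homotopy gravity action} yields the $\mathsf{M}_{\circlearrowleft}$-action on $\tilde{\mathcal{O}}^{\mathrm{cyc}}_M$, hence the up-to-homotopy gravity algebra structure. To see that it lifts the string topology gravity algebra of Example \ref{exmp:string topology gravity}, I would verify that the induced BV operator is the one coming from the $\s$-action and that the induced product agrees on homology with the Chas--Sullivan loop product, which is precisely what Irie's comparison (through Ward's Theorems A and B, \cite{Ward}) establishes non-equivariantly; the gravity operations are then determined by this BV structure.

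Next, the iterated integral morphism. Chen's iterated integrals, combined with the Poincar\'e pairing, define a chain map from the de Rham chains of $\mathcal{L}M$ into the Hochschild cochain complex $\mathrm{CH}(\Omega(M),\Omega(M)^\vee[-\dim M])$; I would first check that it lands in the cyclic invariants and in $\Theta^{-1}$ of them. The crucial point, and the main obstacle, is to promote this to a morphism of $\mathsf{M}_{\circlearrowleft}$-algebras, i.e.\ to show it intertwines the two operadic actions rather than merely being a chain map. I expect this to follow by combining Irie's argument that iterated integrals intertwine the underlying little-disks-type actions \cite{Irie loop} with the functoriality of the cocyclic-to-cyclic passage and of Ward's construction, checked operation by operation; the delicate part is the interaction of iterated integrals with cyclic symmetrization and with Connes' operator.

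Finally I would pass to homology. Arrows $1$ and $4$ are the maps induced by iterated integrals on the ordinary and negative $\s$-equivariant homology respectively, the latter being the homology-level shadow of \eqref{eqn:main result morphism}. Arrows $2$ and $3$ are the connecting maps in the tautological long exact sequences relating the positive and negative theories, topologically and algebraically. Commutativity of the square then reduces to the naturality of iterated integrals with respect to these long exact sequences; here I would invoke the compatibility with long exact sequences built into Theorem \ref{thm:main result 1} and its algebraic counterpart used in Theorem \ref{thm:main result 2}, so that the iterated integral chain map commutes with the periodicity and connecting operators up to coherent homotopy, which descends to strict commutativity on homology.
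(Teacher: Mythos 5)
There is a genuine gap in the construction of $\tilde{\mathcal{O}}^{\mathrm{cyc}}_M$ itself. You build it by applying the cocyclic machine to $\{\mathcal{L}M\times\Delta^k\}_k$ with de Rham chains and then taking the negative cyclic complex $C[[u]]$. The paper instead takes $\tilde{\mathcal{O}}^{\mathrm{cyc}}_M:=\mathrm{Ker}(1-\lambda)$ inside the total complex of Irie's cyclic ns dg operad $\mathcal{O}_M(k)=C^{dR}_{*+\dim M}(\mathscr{L}^M_{k+1,\mathrm{reg}})$ built on \emph{marked Moore loop spaces}. Both differences matter. First, Ward's Theorem C (Proposition \ref{prop:cyclic operad induce structure}(v)) produces the $\mathsf{M}_{\circlearrowleft}$-action on the subcomplex of cyclic \emph{invariants}, via cyclic brace operations; it does not act on $C[[u]]$. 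Second, and more seriously, the operadic compositions (concatenation of loops at marked points, via fiber products over evaluation maps with the regularity condition) live on $\{\mathscr{L}^M_{k+1,\mathrm{reg}}\}_k$, not on $\{\mathcal{L}M\times\Delta^k\}_k$; your complex simply does not carry the structure needed to run Ward's construction. The identification of the homology of the paper's $\tilde{\mathcal{O}}^{\mathrm{cyc}}_M$ with $G^{\s}_*(\mathcal{L}M)$ then requires the zig-zag $\mathscr{L}^M_{k+1,\mathrm{reg}}\to\mathscr{L}^M_{k+1}\hookleftarrow\mathcal{L}^M_{k+1}\hookrightarrow\mathcal{L}^M\times\Delta^k$ of cocyclic differentiable spaces and the resulting quasi-isomorphisms of mixed complexes (Lemma \ref{lem:loop homology isom}, Proposition \ref{prop:Irie loop s1 equiv}); your proposal never addresses this comparison.

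A second, related confusion: you invoke the mechanism of Theorem \ref{thm:main result 2} (dg algebra with a possibly degenerate cyclic pairing, weakly cyclic invariants) on the \emph{geometric} side, with the Poincar\'e pairing supplying the cyclic structure. In fact the Poincar\'e pairing enters only on the algebraic side, where it is needed precisely because $\mathcal{E}nd_{\Omega(M)}$ has no honest cyclic structure ($\theta$ is not an isomorphism). On the geometric side $\mathcal{O}_M$ is honestly cyclic --- $\tau_k$ rotates the marked points of a Moore loop --- so Proposition \ref{prop:cyclic operad induce structure}(v) applies directly with no pairing in sight. Your sketch of the iterated integral morphism and of the commutative square is in the right direction (the paper's proof indeed rests on $I:\mathcal{O}_M\to\mathcal{E}nd_{\Omega(M)}$ being a morphism of ns operads preserving $\mu$ and $\varepsilon$, plus the cocyclic compatibility of $\theta\circ I_k$ and the naturality of the long exact sequences under $\infty$-morphisms), but as written the object you propose to map out of is not the one that carries the $\mathsf{M}_{\circlearrowleft}$-action.
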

	
	The crucial part of Theorem \ref{thm:main result 3} is, of course, the chain level statement that fits well with structures on homology. The first part of Theorem \ref{thm:main result 3} was conjectured by Ward in \cite[Example 6.12]{Ward}, but the correct statement turns out to be more complicated, as we actually lift gravity algebra structures on negative $S^1$-equivariant homology rather than $S^1$-equivariant homology, whereas they are naturally related by a morphism (arrow 2).
	
	Other than the chain level statement, part of the results at homology level is known. For example, the fact that arrow 1 is a Lie algebra homomorphism appeared in work of Abbaspour-Tradler-Zeinalian as \cite[Theorem 11]{ATZ}; The fact that \eqref{eqn:main result diagram} commutes was of importance to Cieliebak-Volkov \cite{CV cyclic} (the arrows are only treated as linear maps there). 	
	
	In a forthcoming paper, the author is going to apply results in this paper to Lagrangian Floer theory, in view of cyclic symmetry therein (Fukaya \cite{Fukaya cyclic}).
	
	\subsection*{Outline} In Section \ref{section:cyclic homology}, we review cyclic homology of mixed complexes. In Section \ref{section:A cocyclic complex and an infty-quasi-isomorphism}, we prove Theorem \ref{thm:main result 1}. In Section \ref{section:differentiable space}, we review Irie's de Rham chain complex of differentiable spaces and apply Theorem \ref{thm:main result 1} to it. In Section \ref{section:operads and algebraic structures}, we review basics of operads and algebraic structures. In Section \ref{section:cyclic brace operations}, we prove Theorem \ref{thm:main result 2}. 
	In Section \ref{section:application to string topology}, we prove Theorem \ref{thm:main result 3}.
	
	\subsection*{Conventions} Vector spaces are over $\mathbb{R}$, algebras are associative and unital, graded objects are $\mathbb{Z}$-graded. Homological and cohomological gradings are mixed by the understanding $C_*=C^{-*},C^*=C_{-*}$. As for sign rules, see Appendix \ref{section:sign rule}. For the sake of convenience, we may write $(-1)^{\varepsilon}$ for a sign that is apparent from Koszul sign rule (Appendix \ref{subsection:Koszul sign rule}).
	
	\subsection*{Acknowledgements} This work was completed when I was a graduate student at Stony Brook University. I am grateful to my advisor, Kenji Fukaya, for guidance, encouragement and support. I also thank Kei Irie for comments on an earlier version of the paper, and thank Ben Ward for a useful email.
	
	\section{Preliminaries on cyclic homology}\label{section:cyclic homology} A convenient way to study different versions of cyclic homology is to work in the context of \emph{mixed complexes}, which was introduced by Kassel \cite{Kassel}. By definition, a mixed cochain complex is a triple $(C^*,b,B)$ consisting of a graded vector space $C^*$ and  linear maps
	$b:C^*\to C^{*+1}$, $B:C^*\to C^{*-1}$ such that	
	\begin{equation*}b^2=0,\hspace{.33333em}B^2=0,\hspace{.33333em}bB+Bb=0.\end{equation*}
	Let $u$ be a formal variable of degree $2$. Define graded $\mathbb{R}[u]$-modules $C[[u]]^*$, $C[[u,u^{-1}]^*$, $C[u^{-1}]^*$ by
	\begin{align*}
		C[[u]]^n&:=\Big\{\sum_{i\geq0} c_iu^i\mid c_i\in C^{n-2i}\Big\},\\ 
		C[[u,u^{-1}]^n&:=\Big\{\sum_{i\geq-k} c_iu^i\mid k\in\mathbb{Z}_{\geq0},\hspace{.16667em}c_i\in C^{n-2i}\Big\},\\
		C[u^{-1}]^n&:=\Big\{\sum_{-k\leq i\leq0} c_iu^i\mid k\in\mathbb{Z}_{\geq0},\hspace{.16667em}c_i\in C^{n-2i}\Big\}.
	\end{align*}
	Here the $\mathbb{R}[u]$-module structure on $C[u^{-1}]$ is induced by the identification $C[u^{-1}]=C[[u,u^{-1}]/uC[[u]]$.
	Then $b+uB$ is a differential on $C[[u]]^*$, $C[[u,u^{-1}]^*$, $C[u^{-1}]^*$, resulting in cohomology groups denoted by 
	\begin{equation*}
		\HC^*_{[[u]]}(C),\hspace{.33333em}\HC^*_{[[u,u^{-1}]}(C),\hspace{.33333em}\HC^*_{[u^{-1}]}(C).
	\end{equation*} 
	These are three classical versions of \emph{cyclic homology} of mixed complexes, called the negative, periodic and ordinary (positive) cyclic homology of $(C^*,b,B)$, respectively.  We prefer to distinguish them by suggestive symbols ($[[u]]$, $[[u,u^{-1}]$, $[u^{-1}]$) rather than names, as did in \cite{CV cyclic}. Here cohomological grading is used for cyclic homology since we deal with cochain complexes. If we move to homological grading $C_*:=C^{-*}$ and replace $u$ by $v$ (a formal variable of degree $-2$), then the mixed chain complex $(C_*,b,B)$ gives negative, periodic and ordinary (positive) cyclic homology theories \begin{equation*}\HC_*^{[[v]]}=\HC^{-*}_{[[u]]}, \hspace{.33333em}\HC_*^{[[v,v^{-1}]}=\HC^{-*}_{[[u,u^{-1}]}, \hspace{.33333em}\HC_*^{[v^{-1}]}=\HC^{-*}_{[u^{-1}]}.\end{equation*} (\cite{CV cyclic} also takes the Hom dual of $C$ to define cyclic cohomology theories of $(C,b,B)$, which we try to avoid in this article.)
	
	For any mixed cochain complex $(C^*,b,B)$, there is a \emph{tautological exact sequence}
	\begin{equation}\label{eqn:cyclic exact seq tautological}
		\cdots\to  \HC^*_{[[u]]}(C)\xrightarrow{i_*} \HC^*_{[[u,u^{-1}]}(C)\xrightarrow{u\cdot p_*} \HC^{*+2}_{[u^{-1}]}(C)\xrightarrow{B_{0*}} \HC^{*+1}_{[[u]]}(C)\to\cdots
	\end{equation}
	which is induced by the short exact sequence
	\begin{equation*}
		0\to C[[u]]\xrightarrow{i} C[[u,u^{-1}]\xrightarrow{p} C[[u,u^{-1}]/C[[u]]\to 0
	\end{equation*}
	and the $(b+uB)$-cochain isomorphism \begin{equation*}(C[[u,u^{-1}]/C[[u]])^*\underset{\cong}{\xrightarrow{\cdot u}}C[u^{-1}]^{*+2};\hspace{.5em}\sum_{-k\leq i\leq -1}c_i u^i\mapsto \sum_{-k\leq i\leq -1}c_i u^{i+1}.\end{equation*}
	The connecting map $B_{0*}:\HC^{*+2}_{[u^{-1}]}(C)\to \HC^{*+1}_{[[u]]}(C)$ is induced by an anti-cochain map
	\begin{equation*}B_0:C[u^{-1}]^{*+2}\to C[[u]]^{*+1};\hspace{.16667em}\sum_{-k\leq i\leq 0}c_iu^i\mapsto B(c_0).\end{equation*}
	Similarly, from the short exact sequences 
	\begin{align*}
		&0\to C[[u]]/uC[[u]]\xrightarrow{i} C[[u,u^{-1}]/uC[[u]]\xrightarrow{p}C[[u,u^{-1}]/C[[u]]\to 0\\
		&0\to uC[[u]]\xrightarrow{i^+} C[[u]]\xrightarrow{p_0} C[[u]]/uC[[u]]\to 0
	\end{align*} 
	one obtains the \emph{Gysin-Connes exact sequences} 
	\begin{subequations}\label{eqn:cyclic exact seq gysin}
		\begin{align}
			\label{eqn:cyclic exact seq gysin [u-1]}&\cdots\to H^*(C,b)\xrightarrow{i_*} \HC_{[u^{-1}]}^*(C)\xrightarrow{u\cdot p_*} \HC_{[u^{-1}]}^{*+2}(C)\xrightarrow{B_{0*}} H^{*+1}(C,b)\to\cdots\\
			\label{eqn:cyclic exact seq gysin [[u]]}&\cdots\to \HC_{[[u]]}^{*-2}(C)\xrightarrow{i^+_*\cdot u} \HC_{[[u]]}^*(C)\xrightarrow{p_{0*}} H^*(C,b)\xrightarrow{B_*} \HC_{[[u]]}^{*-1}(C)\to\cdots
		\end{align}
	\end{subequations}
	The connecting maps $\HC_{[u^{-1}]}^{*+2}(C)\xrightarrow{B_{0*}} H^{*+1}(C,b)$ and $H^*(C,b)\xrightarrow{B_*} \HC_{[[u]]}^{*-1}(C)$ are induced by anti-cochain maps $B_0$ and $B$, respectively.
	\begin{lem}\label{lem:cyclic exact seq gysin morphism}
		The map $B_{0*}:\HC^{*+2}_{[u^{-1}]}(C)\to \HC^{*+1}_{[[u]]}(C)$ in \eqref{eqn:cyclic exact seq tautological} and the exact sequences \eqref{eqn:cyclic exact seq gysin} fit into the following commutative diagram:
		\begin{center}
			\begin{tikzcd}
				\cdots \to H^*(C,b) \arrow[r,"i_*"] \arrow[d,equal,"\mathrm{id}"] & \HC_{[u^{-1}]}^*(C) \arrow[r,"u\cdot p_*"] \arrow[d,"B_{0*}"] & \HC_{[u^{-1}]}^{*+2}(C)\arrow[r,"B_{0*}"]\arrow[d,"B_{0*}"] & H^{*+1}(C,b)\arrow[d,equal,"\mathrm{id}"]  \to\cdots\\
				\cdots \to H^*(C,b) \arrow[r,"B_*"] & \HC_{[[u]]}^{*-1}(C) \arrow[r,"i^+_*\cdot u"] & \HC_{[[u]]}^{*+1}(C)\arrow[r,"p_{0*}"]& H^{*+1}(C,b) \to\cdots
			\end{tikzcd}
		\end{center}
		\begin{proof}
			The left and the right squares commute since they commute at the level of cocycles. As for the middle square, let $c=\sum_{j=-k}^0 c_ju^j\in Z^*(C[u^{-1}])$, then 
			$B_0(u\cdot p(c))=B(c_{-1})$ and
			$i^+(u\cdot B_0(c))=B(c_0)u$.
			Since $c$ is a cocycle, $$0=(b+uB)(c)=\sum_{j=-k}^0 (b(c_j)+B(c_{j-1}))u^j\in C[u^{-1}].$$ In particular, $b(c_0)+B(c_{-1})=0$, so $B(c_0)u-B(c_{-1})=(b+uB)(c_0)$ is exact. This proves $B_{0*}\circ(u\cdot p_*)=(i^+_*\cdot u)\circ B_{0*}$.
		\end{proof}

	\end{lem}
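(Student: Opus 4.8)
The plan is to verify each of the three squares by evaluating the maps on explicit cocycle representatives, which is possible because every arrow in the diagram is either induced by an honest cochain map (an inclusion or a projection of the relevant $u$-adic complexes) or is a connecting homomorphism whose value on cocycles is given by the explicit formula $B_0$ or $B$ recorded after \eqref{eqn:cyclic exact seq tautological} and \eqref{eqn:cyclic exact seq gysin [[u]]}. First I would fix the following dictionary, using $C[[u]]/uC[[u]]\cong C$ and $C[[u,u^{-1}]/uC[[u]]=C[u^{-1}]$: the map $i$ embeds $C$ as the constant ($u^0$) term, $p_0$ reads off the constant term, multiplication by $u$ shifts coefficients, and both occurrences of $B_{0*}$ (the tautological one from \eqref{eqn:cyclic exact seq tautological} and the Gysin one from \eqref{eqn:cyclic exact seq gysin [u-1]}) are computed on a cocycle $c=\sum_{-k\leq j\leq0}c_ju^j$ by $c\mapsto B(c_0)$, the only difference being whether the output is viewed in $H^{*+1}(C,b)$ or as the constant term of an element of $C[[u]]$.

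For the two outer squares the composites already agree on representatives. In the left square, the clockwise route sends a $b$-cocycle $c_0$ through $i_*$ to the constant term $c_0\in C[u^{-1}]$ and then through $B_{0*}$ to $B(c_0)$, while the bottom map $B_*$ is by definition $c_0\mapsto B(c_0)$; these coincide in $\mathrm{HC}_{[[u]]}^{*-1}(C)$. In the right square, the tautological $B_{0*}$ produces $B(c_0)$ as the constant term of an element of $C[[u]]$ and $p_{0*}$ reads it back off, yielding the same class in $H^{*+1}(C,b)$ as the Gysin $B_{0*}$. Thus both outer squares commute already at the cocycle level, with no coboundary correction needed.

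The hard part will be the middle square, where the two composites differ by a coboundary rather than agreeing outright. Evaluating on $c=\sum_{-k\leq j\leq0}c_ju^j$, the counterclockwise composite applies $B_0$ and then multiplies by $u$, giving $B(c_0)u$, whereas the clockwise composite first forms $u\cdot p(c)$, whose constant term is $c_{-1}$, and then applies $B_0$, giving $B(c_{-1})$. The decisive step is to use the cocycle condition $(b+uB)(c)=0$ in $C[u^{-1}]$: extracting the coefficient of $u^0$ gives $b(c_0)+B(c_{-1})=0$, so that $(b+uB)(c_0)=b(c_0)+B(c_0)u=B(c_0)u-B(c_{-1})$ exhibits the difference of the two representatives as an explicit $(b+uB)$-coboundary in $C[[u]]$. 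Hence the two classes agree in $\mathrm{HC}_{[[u]]}^{*+1}(C)$ and the square commutes. The main point requiring care throughout is the grading bookkeeping: one must check that the degree shifts coming from multiplication by $u$ and from the isomorphism $uC[[u]]\cong C[[u]]$ align correctly, so that the source and target claimed for each arrow are the ones actually produced by these cocycle-level formulas.
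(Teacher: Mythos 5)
Your proposal is correct and follows essentially the same route as the paper's proof: the outer squares are checked directly on cocycle representatives, and the middle square is handled by computing both composites on $c=\sum_{-k\leq j\leq 0}c_ju^j$ to get $B(c_{-1})$ and $B(c_0)u$, then using the $u^0$-coefficient of the cocycle condition to exhibit their difference as the coboundary $(b+uB)(c_0)$. No substantive difference from the paper's argument.
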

	
	\begin{defn}Let $(C^*,b,B),(C^{\prime\prime*},b^{\prime\prime},B^{\prime\prime})$ be mixed cochain complexes.
		\begin{enumerate}
			\item A series of linear maps $\{f_i:C^*\to (C^{\prime\prime})^{*-2i}\}_{i\in\mathbb{Z}_{\geq0}}$ is called an \emph{$\infty$-morphism} from $C^*$ to $C^{\prime\prime*}$ if $\sum_{i\geq0}u^if_i:(C[[u,u^{-1}]^*,b+uB)\to (C^{\prime\prime}[[u,u^{-1}]^*,b^{\prime\prime}+uB^{\prime\prime})$ is a cochain map, or equivalently, if $\{f_i\}_{i\geq0}$ satisfies $b^{\prime\prime} f_0=f_0 b$ and
			$B^{\prime\prime} f_{i-1}+b^{\prime\prime} f_i=f_{i-1}B+f_ib$ ($i\geq1$).
			\item An $\infty$-morphism $f=\{f_i\}_{i\geq0}:C^*\to C^{\prime\prime*}$ is called an \emph{$\infty$-quasi-isomorphism} if $f_0:(C^*,b)\to (C^{\prime\prime*},b^{\prime\prime})$ is a cochain quasi-isomorphism.
			\item Given two $\infty$-morphisms $\{f_i\}_{i\geq0},\{g_i\}_{i\geq0}:C^*\to C^{\prime\prime*}$, a series of linear maps $\{h_i:C^*\to (C^{\prime\prime})^{*-2i-1}\}_{i\in\mathbb{Z}_{\geq0}}$ is called an \emph{$\infty$-homotopy} between them if $h=\sum_{i\geq0}u^ih_i:C[[u,u^{-1}]^*\to C^{\prime\prime}[[u,u^{-1}]^*$ is a $(b+uB,b^{\prime\prime}+uB^{\prime\prime})$-cochain homotopy between $\sum_{i\geq0}u^if_i$ and $\sum_{i\geq0}u^ig_i$, or equivalently, if $\{h_i\}_{i\geq0}$ satisfies $f_0-g_0=b^{\prime\prime} h_0+h_0 b$ and 
			$f_i-g_i=b^{\prime\prime} h_i+h_i b +B^{\prime\prime} h_{i-1}+h_{i-1}B$ ($i\geq1$).
		\end{enumerate}
	\end{defn}
	A morphism between mixed complexes is an $\infty$-morphism $\{f_i\}_{i\geq0}$ such that $f_i=0$ for all $i>0$, namely a single degree 0 linear map that commutes with both $b$ and $B$. A quasi-isomorphism between mixed complexes is a morphism that is also a $(b,b^{\prime\prime})$-quasi-isomorphism. A homotopy between two morphisms $f,g:(C^*,b,B)\to(C^{\prime\prime*},b^{\prime\prime},B^{\prime\prime})$ is an $\infty$-homotopy $\{h_i\}_{i\geq0}$ such that $h_i=0$ for all $i>0$, namely a single degree $-1$ linear map $h$ satisfying $f-g=b^{\prime\prime} h+hb$ and $B^{\prime\prime} h+hB=0$.
	
	The following important lemma goes back to \cite[Lemma 2.1]{Jones cyclic}, and is a special case of \cite[Lemma 2.3]{Zhao cyclic} which is stated for  $S^1$-complexes (an $\infty$-version of mixed complexes). The proof is a spectral sequence argument using the $u$-adic filtration on $C[[u]]^*$ etc.
	\begin{lem}\label{lem:mixed complex quasi}
		Let $\{f_i\}_{i\geq0}:(C^*,b,B)\to(C^{\prime\prime*},b^{\prime\prime},B^{\prime\prime})$ be an $\infty$-quasi-isomorphism. Then $\sum_{i\geq0}u^if_i$ induces isomorphisms on $\HC^*_{[[u]]}$, $\HC^*_{[[u,u^{-1}]}$ and $\HC^*_{[u^{-1}]}$.\qed
	\end{lem}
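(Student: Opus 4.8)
The plan is to realize $F:=\sum_{i\geq0}u^if_i$ as a morphism of filtered complexes simultaneously on $C[[u]]$, $C[[u,u^{-1}]$ and $C[u^{-1}]$, to identify the first page of the associated spectral sequences with (shifted copies of) $H^*(C,b)$, and then to invoke the hypothesis that $f_0$ is a cochain quasi-isomorphism together with an appropriate convergence/comparison theorem. First I would observe that $F$ is $\mathbb{R}[[u]]$-linear, hence preserves both $C[[u]]$ and $uC[[u]]$; therefore it induces a cochain endomorphism $F_+$ of $(C[[u]],b+uB)$, a cochain map $F_0$ on $(C[[u,u^{-1}],b+uB)$, and (on the quotient by $uC[[u]]$) a cochain map $F_-$ on $(C[u^{-1}],b+uB)$. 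Moreover $F$ is compatible with the short exact sequence $0\to C[[u]]\to C[[u,u^{-1}]\to C[[u,u^{-1}]/C[[u]]\to0$ underlying \eqref{eqn:cyclic exact seq tautological}, as well as with the degree-shifting isomorphism $C[[u,u^{-1}]/C[[u]]\xrightarrow{\cong}C[u^{-1}][2]$ recorded in the excerpt.

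For the negative theory I would filter $C[[u]]$ by the decreasing $u$-adic filtration $F^pC[[u]]:=u^pC[[u]]$. Since $uB$ strictly raises the $u$-power, the differential induced on the associated graded is $b$ alone, so each $E_1$-term is a copy of $H^*(C,b)$ (shifted by a power of $u$), and $F_+$ acts on $E_1$ as the map induced by $f_0$, which is an isomorphism by hypothesis. The filtration is complete, Hausdorff and exhaustive, so the complete convergence (comparison) theorem for filtered complexes yields that $F_+$ is a quasi-isomorphism, i.e. an isomorphism on $\mathrm{HC}^*_{[[u]]}$. For the positive theory I would filter $C[u^{-1}]$ by the increasing filtration by $u^{-1}$-degree; this filtration is bounded below and exhaustive, the $E_1$-page is again made of shifted copies of $H^*(C,b)$ on which $F_-$ acts via $f_0$, and the standard convergence theorem for bounded-below exhaustive filtrations gives that $F_-$ is an isomorphism on $\mathrm{HC}^*_{[u^{-1}]}$.

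Finally, for the periodic theory I would not argue by spectral sequences directly but instead apply the five lemma to the map of tautological long exact sequences \eqref{eqn:cyclic exact seq tautological} induced by $F$, using the two isomorphisms just established at the $[[u]]$ and $[u^{-1}]$ spots; this forces $F_0$ to be an isomorphism on $\mathrm{HC}^*_{[[u,u^{-1}]}$. I expect the main obstacle to be the convergence of the negative spectral sequence: because the $u$-adic filtration on $C[[u]]$ is unbounded, naive convergence fails and one must verify the hypotheses of the complete convergence theorem (completeness, Hausdorffness and exhaustiveness of the filtration). Once that case is secured, the positive case is a routine bounded-below argument and the periodic case is purely formal via the five lemma, so the whole lemma rests on getting the complete-filtration comparison right.
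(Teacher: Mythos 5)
Your proposal is correct and follows exactly the route the paper indicates for this lemma: a spectral sequence comparison via the $u$-adic filtration on $C[[u]]$ (complete, exhaustive, with $E_1$ given by shifted copies of $H^*(C,b)$ on which the map acts by $f_0$), the analogous bounded-below filtration on $C[u^{-1}]$, and a formal deduction for the periodic theory. The only point worth recording is the one you already flag -- the negative case needs the complete-convergence/Eilenberg--Moore comparison theorem (e.g.\ \cite[Theorem 5.5.10]{Weibel}, the same tool the paper invokes in the proof of Lemma \ref{lem:cocyclic singular quasi}) rather than naive convergence, and your five-lemma argument for $\mathrm{HC}^*_{[[u,u^{-1}]}$ is legitimate since $F$ visibly induces a morphism of the underlying short exact sequences of complexes.
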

	The following lemma illustrates the naturality of the tautological exact sequence and Connes-Gysin exact sequences for cyclic homology, with respect to $\infty$-morphisms between mixed complexes.
	\begin{lem}\label{lem:cyclic long exact seq naturality}
		Let $f=\{f_i\}_{i\geq0}:(C^*,b,B)\to(C^{\prime\prime*},b^{\prime\prime},B^{\prime\prime})$ be an $\infty$-morphism. Then $f=\sum_i u^i f_i$ induces a morphism between the exact sequence \eqref{eqn:cyclic exact seq tautological} for $C$ and $C^{\prime\prime}$, namely there is a commutative diagram
		\begin{center}
			\begin{tikzcd}
				\cdots \HC^*_{[[u]]}(C)\arrow[r,"i_*"]\arrow[d,"f_*"] & \HC^*_{[[u,u^{-1}]}(C)\arrow[r,"u\cdot p_*"]\arrow[d,"f_*"] & \HC^{*+2}_{[u^{-1}]}(C)\arrow[r,"B_{0*}"]\arrow[d,"f_*"] & \HC^{*+1}_{[[u]]}(C)\arrow[d,"f_*"]  \cdots\\
				\cdots \HC^*_{[[u]]}(C^{\prime\prime})\arrow[r,"i_*^{\prime\prime}"] & \HC^*_{[[u,u^{-1}]}(C^{\prime\prime})\arrow[r,"u\cdot p_*^{\prime\prime}"] & \HC^{*+2}_{[u^{-1}]}(C^{\prime\prime})\arrow[r,"B_{0*}^{\prime\prime}"] & \HC^{*+1}_{[[u]]}(C^{\prime\prime}) \cdots.
			\end{tikzcd}
		\end{center}
		Similarly, for the exact sequence  \eqref{eqn:cyclic exact seq gysin [u-1]}, there is a commutative diagram
		\begin{center}
			\begin{tikzcd}
				\cdots  H^*(C,b) \arrow[r,"i_*"] \arrow[d,"f_{0*}"] & \HC_{[u^{-1}]}^*(C) \arrow[r,"u\cdot p_*"] \arrow[d,"f_*"] & \HC_{[u^{-1}]}^{*+2}(C)\arrow[r,"B_{0*}"]\arrow[d,"f_*"] & H^{*+1}(C,b)\arrow[d,"f_{0*}"]  \cdots\\
				\cdots H^*(C^{\prime\prime},b^{\prime\prime}) \arrow[r,"i_*^{\prime\prime}"] & \HC_{[u^{-1}]}^{*-1}(C^{\prime\prime}) \arrow[r,"u\cdot p_*^{\prime\prime}"] & \HC_{[u^{-1}]}^{*+1}(C^{\prime\prime})\arrow[r,"B_{0*}^{\prime\prime}"]& H^{*+1}(C^{\prime\prime},b^{\prime\prime})  \cdots.
			\end{tikzcd}
		\end{center}
		The case of the exact sequence \eqref{eqn:cyclic exact seq gysin [[u]]} is also similar.
	\end{lem}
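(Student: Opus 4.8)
The plan is to reduce the entire statement to the naturality of the long exact cohomology sequence attached to a short exact sequence of cochain complexes. The crucial structural observation is that the map $f=\sum_{i\geq0}u^if_i$ is $\mathbb{R}[u]$-linear (it is a sum of $u$-linear extensions of the $f_i$, each multiplied by a power of $u$) and involves only nonnegative powers of $u$. Consequently $f$ \emph{preserves the $u$-adic filtration}: it restricts to cochain maps $C[[u]]\to C''[[u]]$ and $u^kC[[u]]\to u^kC''[[u]]$ for every $k\geq0$, and it descends to every quotient built out of these. Since by hypothesis $f$ is a cochain map for $b+uB$, all these restrictions and descents are again cochain maps. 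Thus the only real work is to recognize each of the three exact sequences as the long exact sequence of an explicit short exact sequence of complexes, to check that $f$ induces a morphism of that short exact sequence, and to invoke naturality of the connecting homomorphism.

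For the tautological sequence \eqref{eqn:cyclic exact seq tautological}, I would start from the defining short exact sequence $0\to C[[u]]\xrightarrow{i}C[[u,u^{-1}]\xrightarrow{p}C[[u,u^{-1}]/C[[u]]\to0$. Because $f$ preserves $C[[u]]$, it induces a commutative ladder between this sequence and its primed analogue, with vertical maps $f$ on the sub- and total complexes and the descended map $\bar f$ on the quotient. The one point needing care is the third vertical map: $f$ does not preserve $C[u^{-1}]$ as a literal subspace of $C[[u,u^{-1}]$, so one must use the presentation $C[u^{-1}]=C[[u,u^{-1}]/uC[[u]]$. Since $f$ is $u$-linear it preserves $uC[[u]]$ and hence descends to $C[u^{-1}]$; moreover the cochain isomorphism $\cdot u\colon(C[[u,u^{-1}]/C[[u]])^*\xrightarrow{\cong}C[u^{-1}]^{*+2}$ recalled above is itself $u$-linear and therefore commutes with $f$. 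Thus $\bar f$ is carried to the descended map on $C[u^{-1}]$, which is exactly the $f_*$ in the diagram, and naturality of the long exact sequence yields the first commutative diagram.

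For the Gysin--Connes sequences I would run the same argument on the two short exact sequences displayed before \eqref{eqn:cyclic exact seq gysin}. For \eqref{eqn:cyclic exact seq gysin [u-1]} the sequence is $0\to C[[u]]/uC[[u]]\to C[[u,u^{-1}]/uC[[u]]\to C[[u,u^{-1}]/C[[u]]\to0$; the extra identification to make here is $C[[u]]/uC[[u]]\cong(C,b)$, under which the descent of $f$ becomes its reduction modulo $u$, namely $f_0$. This explains why the outer vertical arrows in the second diagram are $f_{0*}$ while the inner two, living on $C[u^{-1}]$, are $f_*$. The sequence \eqref{eqn:cyclic exact seq gysin [[u]]} is handled identically from $0\to uC[[u]]\xrightarrow{i^+}C[[u]]\xrightarrow{p_0}C[[u]]/uC[[u]]\to0$, with $f$ on the first two terms and $f_0$ on $C[[u]]/uC[[u]]\cong(C,b)$.

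The genuinely delicate point, and the only place where one must resist a naive treatment, is the behavior of $f$ on the positive cyclic complex: $f$ is not the restriction of $\sum_{i\geq0}u^if_i$ to the subspace of nonpositive powers of $u$ (that subspace is not $f$-invariant), but rather the map induced on the quotient $C[[u,u^{-1}]/uC[[u]]$. Once the quotient presentation and the $u$-linearity of both $f$ and the shift $\cdot u$ are in hand, every square commutes for the formal reason that it already commutes at the level of short exact sequences of complexes, and the remaining verifications are routine bookkeeping with $u$-adic (co)filtrations.
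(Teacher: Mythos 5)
Your proof is correct, and it takes a more structural route than the paper's. The paper handles the first two squares of each ladder exactly as you do in spirit (they commute at the level of cocycles), but for the square involving the connecting map $B_{0*}$ it performs an explicit chain-level computation: starting from a cocycle $c=\sum_{j=0}^k c_{-j}u^{-j}\in Z^{*+2}(C[u^{-1}])$ it exhibits a concrete primitive, namely $\sum_{i\geq0}\sum_{0\leq j\leq k}f_{i+j+1}(c_{-j})u^i$, witnessing that $(f\circ B_0-B_0^{\prime\prime}\circ f)(c)$ is exact, using the cocycle relations $b(c_{-j})+B(c_{-j-1})=0$ together with the $\infty$-morphism relations $B^{\prime\prime}f_{i-1}+b^{\prime\prime}f_i=f_{i-1}B+f_ib$. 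You instead observe that $f=\sum_i u^if_i$ preserves the $u$-adic filtration, hence induces a morphism of each of the defining short exact sequences, and then invoke naturality of the connecting homomorphism; your explicit attention to the fact that $f$ acts on $C[u^{-1}]$ only through the quotient presentation $C[[u,u^{-1}]/uC[[u]]$ (not by restriction to nonpositive powers), and that the shift $\cdot\, u$ is $u$-linear and hence commutes with $f$, is precisely the point that makes this reduction legitimate. Your argument buys uniformity --- all three ladders, connecting-map squares included, follow from one formal principle applied to the short exact sequences the paper already uses to define these long exact sequences --- at the cost of not producing the explicit homotopy; the paper's computation is self-contained and records the primitive, which would be needed if one later wanted chain-level rather than merely homology-level compatibility.
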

	\begin{proof}We only write proof for the first diagram since the others are similar. The left and the middle squares commute since they commute at the level of cocycles. Now let $c=\sum_{j=0}^k c_{-j}u^{-j}\in Z^{*+2}(C[u^{-1}])$. Then, $(b+uB)(c)=0$ says $b(c_{-j})+B(c_{-j-1})=0$ for all $j\in\{0,\dots,k\}$. Also recall the $\infty$-morphism $f$ satisfies $B^{\prime\prime} f_{i-1}+b^{\prime\prime}f_i=f_{i-1}B+f_ib$. Using these relations, it is a straightforward computation to see
		\begin{equation*}
			\sum_{i\geq0}f_i(B(c_0))\cdot u^i-B^{\prime\prime}\Big(\sum_{0\leq j\leq k}f_j(c_{-j})\Big)=(b^{\prime\prime}+uB^{\prime\prime})\Big(\sum_{i\geq0}\sum_{0\leq j\leq k}f_{i+j+1}(c_{-j})\cdot u^i\Big).
		\end{equation*}
		The left-hand side is $(f\circ B_0-B_0^{\prime\prime}\circ f)(c)$, and the right-hand side is exact, so commutativity of the right square is proved. 
	\end{proof}
	
	We now discuss some important examples of mixed (co)chain complexes and their cyclic homologies. Recall that a \emph{cosimplicial object} in some category is a sequence of objects $C(k)$ $(k\in\mathbb{Z}_{\geq0})$ together with morphisms 
	\begin{equation*}
		\delta_i:C(k-1)\to C(k)\hspace{.16667em}(0\leq i\leq k),\hspace{.33333em}\sigma_{i}:C(k+1)\to C(k)\hspace{.16667em}(0\leq i\leq k)
	\end{equation*}
	satisfying the following relations:
	\begin{align*}
		\delta_j\delta_i&=\delta_i\delta_{j-1}\hspace{.16667em}(i<j);\\
		\sigma_j\sigma_i&=\sigma_i\sigma_{j+1}\hspace{.16667em}(i\leq j);\\
		\sigma_j\delta_i&=\begin{cases}\delta_i\sigma_{j-1} & (i<j)\\
			\mathrm{id}&(i=j,j+1)\\
			\delta_{i-1}\sigma_j&(i>j+1).
		\end{cases}
	\end{align*}
	A \emph{cocyclic object} is a cosimplicial object $\{C(k)\}_k$ together with morphisms $\tau_k:C(k)\to C(k)$ satisfying the following relations:
	\begin{align*}
		\tau_k^{k+1}&=\mathrm{id};\\
		\tau_k\delta_i&=\delta_{i-1}\tau_{k-1}\hspace{.16667em}(1\leq i\leq k),\hspace{.66667em}\tau_k\delta_0=\delta_k;\\
		\tau_k\sigma_i&=\sigma_{i-1}\tau_{k+1}\hspace{.16667em}(1\leq i\leq k),\hspace{.66667em}\tau_k\sigma_0=\sigma_k\tau_{k+1}^2.
	\end{align*}
	For example, let $\Delta^0:=\mathbb{R}^0$, $\Delta^k:=\{(t_1,\dots,t_k)\in\mathbb{R}^k\mid0\leq t_1\leq\dots\leq t_k\leq1\}$ $(k>0)$ be the standard simplices, then $\{\Delta^k\}_{k\in\mathbb{Z}_{\geq0}}$ is a cocyclic set (topological space, etc.) with standard cocyclic maps $\delta_i:\Delta^{k-1}\to \Delta^k$, $\sigma_i:\Delta^{k+1}\to \Delta^k$, $\tau_k:\Delta^k \to \Delta^k$ defined by
	\begin{subequations}\label{eqn:cocyclic simplex}
		\begin{align}
			\delta_i(t_1,\dots,t_{k-1})&:=
			\begin{cases}
				(0,t_1,\dots,t_{k-1}) & (i=0)\\
				(t_1,\dots,t_i,t_i,\dots,t_{k-1}) & (1\leq i\leq k-1)\\
				(t_1,\dots,t_{k-1},1) & (i=k),
			\end{cases}\\
			\sigma_i(t_1,\dots,t_{k+1})&:=
			(t_1,\dots,\widehat{t_{i+1}},\dots,t_{k+1})\hspace{.16667em}(\hbox{miss }t_{i+1})\hspace{.16667em}(0\leq i\leq k),\\
			\tau_k(t_1,\dots,t_k)&:=
			(t_2-t_1,\dots,t_k-t_1,1-t_1).
		\end{align}
	\end{subequations}
	\begin{rem}
		Equivalently, for $\tilde{\Delta}^k:=\{(s_0,s_1,\dots,s_k)\in[0,1]^{k+1}\mid s_0+s_1\cdots+s_k=1\}$ ($k\geq0$), $\tau_k:\tilde{\Delta}^k\to\tilde{\Delta}^k$ reads $\tau_k(s_0,s_1,\dots,s_k)=(s_1,\dots,s_k,s_0)$.
	\end{rem}
	
	\begin{exmp}[Cocyclic complex and Connes' version of cyclic cohomology]\label{exmp:cocyclic complex} 
		Consider the category of cochain complexes where the morphisms are degree 0 cochain maps. Let $\big((C(k)^*,d),\delta_i,\sigma_i,\tau_k\big)$ be a cocyclic cochain complex, then a mixed cochain complex $(C,b,B)$ is obtained as follows. Let 
		\begin{equation}\label{eqn:cocyclic complex delta defn}
			\delta:C(k-1)^*\to C(k)^*;\hspace{.33333em}c_{k-1}\mapsto(-1)^{|c_{k-1}|+k-1}\sum_{0\leq i\leq k}(-1)^i\delta_i(c_{k-1}),
		\end{equation}
		then $\delta^2=0$, $\delta d+d\delta=0$. Let $(C^*,b)$ be the \emph{product} total complex of the double complex $\big(C(k)^l,d,\delta\big)^{k\in\mathbb{Z}_{\geq0}}_{l\in\mathbb{Z}}$:
		\begin{equation*}
			C^*:=\prod_{l+k=*}C(k)^l=\prod_{k\geq 0} C(k)^{*-k},\hspace{.66667em}b=d+\delta.
		\end{equation*}
		For later purpose we also introduce the \emph{normalized} subcomplex $(C_{\mathrm{nm}}^*,b)$ of $(C^*,b)$:
		\begin{equation*}
			C_{\mathrm{nm}}^*:=\prod_{k\geq0}C_{\mathrm{nm}}(k)^{*-k},\hspace{.5em}C_{\mathrm{nm}}(k):=\bigcap_{0\leq i\leq k-1}\mathrm{Ker}\big(\sigma_i:C(k)\to C(k-1)\big).
		\end{equation*}
		Note that the natural inclusion $\{C_{\mathrm{nm}}(k)\}\subset \{C(k)\}$ is not cosimplicial since $\delta_j$ does not restricts to $C_{\mathrm{nm}}(k)$. The natural inclusion $(C_{\mathrm{nm}}^*,b)\hookrightarrow(C^*,b)$ is a quasi-isomorphism (see \cite[Proposition 1.6.5]{Loday cyclic book} or \cite[Lemma 2.5]{Irie loop}). Next, define the operator $B:C^*\to C^{*-1}$ by
		\begin{equation*}
			B:=Ns(1-\lambda)\hspace{.33333em}(\hbox{\emph{Connes' operator}}),
		\end{equation*}
		where $\lambda,N,s$ are given by (here $|c|$ is the degree of $c=(c_k)_{k\geq0}\in\prod_{k\geq0}C(k)$ in $C^*$)
		\begin{equation*}
			\lambda|_{C(k)}:=(-1)^k\tau_k,\hspace{.16667em} N|_{C(k)}:=1+\lambda+\dots+\lambda^k,\hspace{.16667em} s(c):=(-1)^{|c|-1}(\sigma_{k}\tau_{k+1}(c_{k+1}))_{k\geq0}.
		\end{equation*}
		Although $C_{\mathrm{nm}}$ is not closed under $\lambda$, $N$, it is closed under $s$, $B$. For  $c_{k+1}\in C_{\mathrm{nm}}(k+1)^*$, there holds
		$$
		s(\lambda(c_{k+1}))=(-1)^{|c_{k+1}|}\sigma_k\tau_{k+1}^2(c_{k+1})=(-1)^{|c_{k+1}|}\tau_k\sigma_0(c_{k+1})=0,
		$$
		so Connes' operator $B$ has simpler form on normalized subcomplex:
		\begin{equation*}
			B|_{C_{\mathrm{nm}}}=Ns.
		\end{equation*}
		To see $(C^*,b,B)$ is a mixed complex, define 
		\begin{equation*}b^\prime:C^*\to C^{*+1},\hspace{.5em}c\mapsto b(c)-((-1)^{|c_{k-1}|-1}\delta_k(c_{k-1}))_{k\geq0}.\end{equation*}
		It is a routine calculation to see $(b^\prime)^2=0$,
		$N(1-\lambda)=(1-\lambda)N=0$, $(1-\lambda)b=b^\prime(1-\lambda)$, $bN=Nb^\prime$ and $b^\prime s+sb^\prime=1$.
		It follows that $B^2=Ns((1-\lambda)N)s(1-\lambda)=0$ and $bB+Bb=Nb^\prime s(1-\lambda)+Nsb^\prime(1-\lambda)=N(1-\lambda)=0$, as desired. The identity $(1-\lambda)b=b^\prime(1-\lambda)$ also implies that the space of \emph{cyclic invariants}, \begin{equation*}C_{\mathrm{cyc}}:=\mathrm{Ker}(1-\lambda)\subset(C,b),\end{equation*} forms a subcomplex (we denote this inclusion by $i_\lambda$). This leads to \emph{Connes' version of cyclic cohomology} of the cocyclic cochain complex,
		\begin{equation*}\HC_{\lambda}^*(C)=\HC_{\lambda}^*\big(C(k),d,\delta_i,\sigma_i,\tau_k\big):=H^*(C_{\mathrm{cyc}},b).\end{equation*}
		Since $B=Ns(1-\lambda)$ vanishes on $C_{\mathrm{cyc}}$, $(C_{\mathrm{cyc}}^*,b)$ is also naturally a subcomplex of $(C[[u]]^*,b+uB)$. By an argument similar to \cite[Theorem 2.1.5, 2.1.8]{Loday cyclic book} 
		one sees that this inclusion $I_{\lambda}:(C_{\mathrm{cyc}}^*,b)\hookrightarrow(C[[u]]^*,b+uB)$ 
		induces an isomorphism
		\begin{equation}\label{eqn:cyclic Connes isom}
			I_{\lambda*}:\HC_{\lambda}^*(C)\cong \HC_{[[u]]}^*(C).
		\end{equation}
		The short exact sequence $0\to(C_{\mathrm{cyc}},b)\xrightarrow{i_\lambda} (C,b)\xrightarrow{p_\lambda} (C/C_{\mathrm{cyc}},b)\to 0$ induces \emph{Connes' long exact sequence} (we follow the presentation of \cite[Section 3.7]{K})
		\begin{equation}\label{eqn:cyclic exact seq Connes}
			\cdots\to \HC_\lambda^*(C)\xrightarrow{i_{\lambda*}} H^*(C,b)\xrightarrow{B_\lambda}\HC_\lambda^{*-1}(C)\xrightarrow{S_\lambda}\HC_\lambda^{*+1}(C)\to\cdots.
		\end{equation}
		Here we have made use of an isomorphism $\HC_\lambda^{*-1}(C)\cong H^*(C/C_{\mathrm{cyc}},b)$, which is a consequence of another short exact sequence 
		\begin{equation}\label{eqn:Connes short exact seq acyclic}
			0\to (C/C_{\mathrm{cyc}},b)\xrightarrow{1-\lambda} (C,b^\prime)\xrightarrow{N}(C_{\mathrm{cyc}},b)\to 0
		\end{equation} and the fact that $ (C,b^\prime)$ is acyclic (since $b^\prime s+sb^\prime=1$). Lemma \ref{lem:Connes exact seq coincide} below says \eqref{eqn:cyclic exact seq Connes} can be identified with \eqref{eqn:cyclic exact seq gysin [[u]]}.
		Finally we mention that $\HC_{[[u]]}^*(C_{\mathrm{nm}})\cong \HC_{[[u]]}^*(C)\cong \HC_{\lambda}^*(C)$, where the first isomorphism follows from Lemma \ref{lem:mixed complex quasi}.
	\end{exmp}
	A subexample of Example \ref{exmp:cocyclic complex} is as follows.
	\begin{exmp}[Cyclic cohomology of dg algebras]\label{exmp:cyclic cohomology of algebra}
		Let $A^*$ be a dg algebra with unit $1_A$. Then $\{\Hom^*(A^{\ot k+1},\mathbb{R})\}_{k\geq0}$ has the structure of a cocyclic cochain complex, where $\delta_i:\Hom^{*}(A^{\ot k},\mathbb{R})\to\Hom^{*}(A^{\ot k+1},\mathbb{R})$, $\sigma_i:\Hom^{*}(A^{\ot k+2},\mathbb{R})\to\Hom^{*}(A^{\ot k+1},\mathbb{R})$ and $\tau_k:\Hom^{*}(A^{\ot k+1},\mathbb{R})\to\Hom^{*}(A^{\ot k+1},\mathbb{R})$ are 
		\begin{align}\nonumber
			\delta_i(\vphi)(a_1\ot\cdots\ot a_{k+1})&:=\begin{cases}
				(-1)^{\varepsilon}\vphi(a_2\ot\cdots\ot a_k\ot a_{k+1}a_1)& (i=0)\\
				\vphi(a_1\ot\cdots\ot a_i a_{i+1}\ot\cdots\ot a_{k+1})& (1\leq i\leq k),
			\end{cases}\\\nonumber
			\sigma_i(\vphi)(a_1\ot\cdots\ot a_{k+1})&:=\vphi(a_1\ot\cdots\ot a_i\ot 1_A\ot a_{i+1}\ot\cdots\ot a_{k+1})\hspace{.33333em}(0\leq i\leq k),\\\label{eqn:cyclic permutation on End}
			\tau_k(\vphi)(a_1\ot\cdots\ot a_{k+1})&:=(-1)^{\varepsilon}\vphi(a_{k+1}\ot a_1\cdots\ot a_k).
		\end{align}
		The associated mixed total complex is denoted by $\CH^*(A,A^\vee)$. For simplicity, denote cyclic homologies of $\CH^*(A,A^\vee)$ by $\HC_{[u^{-1}]}^*(A,A^\vee)$, $\HC_{[[u]]}^*(A,A^\vee)\cong \HC_\lambda^*(A,A^\vee)$, etc. Classically, $\HC_\lambda^*(A,A^\vee)$ is called \emph{(Connes') cyclic cohomology} of $A$. 
		
		Let us also recall that for any dg $A$-bimodule $M^*$, there is a structure of a cosimplicial complex on $\{\Hom^*(A^{\ot k},M)\}_{k\geq0}$, where $\delta_i:\Hom^*(A^{\ot k-1},M)\to\Hom^*(A^{\ot k},M)$, $\sigma_i:\Hom^*(A^{\ot k+1},M)\to\Hom^*(A^{\ot k},M)$ are
		\begin{align*}
			\delta_i(f)(a_1\ot\cdots\ot a_k)&:=\begin{cases}
				(-1)^{\varepsilon}a_1\boldsymbol{\cdot}f(a_2\ot\cdots\ot a_k)& (i=0)\\
				f(a_1\ot\cdots\ot a_i a_{i+1}\ot\cdots\ot a_k)& (1\leq i\leq k-1)\\
				f(a_1\ot\cdots\ot a_{k-1})\boldsymbol{\cdot}a_k&(i=k),
			\end{cases}\\
			\sigma_i(f)(a_1\ot\cdots\ot a_k)&:=f(a_1\ot\cdots\ot a_i\ot 1_A\ot a_{i+1}\ot\cdots\ot\cdots\ot a_{k})\hspace{.33333em}(0\leq i\leq k).
		\end{align*}
		The associated total complex, denoted by $\CH^*(A,M)$, is called the \emph{Hochschild cochain complex}, whose cohomology group, denoted by $\mathrm{HH}^*(A,M)$, is called the \emph{Hochschild cohomology}. Taking $M^*=(A^\vee)^*=\Hom^*(A,\mathbb{R})$ with dg $A$-bimodule structure satisfying
		\begin{equation}\label{eqn:A dual bimodule}
			(d\vphi)(a)+(-1)^{|\vphi|}\vphi(da)=0,\hspace{.5em}\vphi (ab)=(-1)^{(|a|+|\vphi|)|b|}(b\boldsymbol{\cdot}\vphi)(a)=(\vphi\boldsymbol{\cdot} a)(b),
		\end{equation}	
		one sees that the cosimplicial structure on $\{\Hom^*(A^{\ot k},A^\vee)\}_k$ is the same as that on $\{\Hom^*(A^{\ot k+1},\mathbb{R})\}$ described previously, in view of the natural isomorphism
		\begin{equation*}
			\Hom^*(A^{\ot k},A^\vee)\cong\Hom^{*}(A^{\ot k}\ot A,\mathbb{R})=\Hom^{*}(A^{\ot k+1},\mathbb{R})
		\end{equation*}
		from $\Hom$-$\ot$ adjunction. See Example \ref{exmp:end operad} for further discussion.
	\end{exmp}
	\begin{rem}
		We shall use the name ``Connes' version of cyclic \emph{co}homology'' for ``\emph{co}cyclic complex'', even if we work with chain complexes rather than cochain complexes. For a cocyclic chain complex $\big((C(k)_*,\partial),\delta_j,\sigma_i,\tau_k\big)$, Connes' version of cyclic cohomology is $\HC^{\lambda}_*(C):=H_*(C^{\mathrm{cyc}},b)$ where $C^{\mathrm{cyc}}_*:=\mathrm{Ker}(1-\lambda)\subset C_*$, and $\HC^{\lambda}_*(C)$ is isomorphic to $\HC_*^{[[v]]}$ of the mixed chain complex $(C_*,b,B)=\big(\prod_{k\geq0}C(k)_{*+k},\partial+\delta,Ns(1-\lambda)\big)$.
	\end{rem}
	
	\begin{lem}\label{lem:Connes exact seq coincide}
		In the situation of Example \ref{exmp:cocyclic complex}, the isomorphism \eqref{eqn:cyclic Connes isom} and the long exact sequences \eqref{eqn:cyclic exact seq Connes} and \eqref{eqn:cyclic exact seq gysin [[u]]} fit into the following commutative diagram:
		\begin{center}
			\begin{tikzcd}
				\cdots   \HC_\lambda^{*}(C)\arrow[r,"i_{\lambda*}"]\arrow[d,"I_{\lambda*}","\cong"']& H^{*}(C,b)\arrow[r,"B_\lambda"]\arrow[d,"\mathrm{id}",equal] &  \HC_\lambda^{*-1}(C)\arrow[r,"-S_\lambda"]\arrow[d,"I_{\lambda*}","\cong"'] & \HC_\lambda^{*+1}(C)\arrow[d,"I_{\lambda*}","\cong"']  \cdots\\
				\cdots  \HC_{[[u]]}^{*}(C)\arrow[r,"p_{0*}"]& H^{*}(C,b)\arrow[r,"B_*"] & \HC_{[[u]]}^{*-1}(C) \arrow[r,"i^+_*\cdot u"] & \HC_{[[u]]}^{*+1}(C)\cdots
			\end{tikzcd}
		\end{center}
	\end{lem}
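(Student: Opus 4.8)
The plan is to verify the three squares of the diagram separately, each at the level of (co)cycles, using explicit descriptions of the maps. Recall that $I_\lambda$ sends a cyclic cocycle $c\in C_{\mathrm{cyc}}$ to the same element regarded as the $u^0$-term of $C[[u]]$, that $p_{0*}$ is induced by projection onto the $u^0$-coefficient, and that $i^+_*\cdot u$ is multiplication by $u$ on $\mathrm{HC}^*_{[[u]]}$. I would also record the cocycle-level formula for $B_*\colon H^*(C,b)\to\mathrm{HC}^{*-1}_{[[u]]}(C)$: for a $b$-cocycle $z$ one has $(b+uB)z=uB(z)$, so the connecting map of the Gysin sequence \eqref{eqn:cyclic exact seq gysin [[u]]} gives $B_*[z]=[B(z)]$ (as already noted in the text). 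The maps $B_\lambda$ and $S_\lambda$ are read off from Connes' sequence \eqref{eqn:cyclic exact seq Connes} for $0\to C_{\mathrm{cyc}}\to C\to C/C_{\mathrm{cyc}}\to 0$, together with the identification $H^*(C/C_{\mathrm{cyc}},b)\cong\mathrm{HC}^{*-1}_\lambda(C)$ furnished by the acyclic sequence \eqref{eqn:Connes short exact seq acyclic}.

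The left square is immediate: for a $b$-cocycle $c\in C_{\mathrm{cyc}}$, both $p_{0*}I_{\lambda*}[c]$ and $i_{\lambda*}[c]$ are the class of $c$ in $H^*(C,b)$, so it commutes already on cocycles.

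For the middle square I compute $B_\lambda$ directly. Unwinding the connecting isomorphism of \eqref{eqn:Connes short exact seq acyclic}, the class $B_\lambda[z]$ is represented by $N\tilde z$ for any $\tilde z$ with $b^\prime\tilde z=(1-\lambda)z$. Since $b^\prime(1-\lambda)=(1-\lambda)b$, the element $(1-\lambda)z$ is a $b^\prime$-cocycle, and acyclicity of $(C,b^\prime)$ (via $b^\prime s+sb^\prime=1$) lets me take $\tilde z=s(1-\lambda)z$; hence $B_\lambda[z]=[Ns(1-\lambda)z]=[B(z)]$ in $\mathrm{HC}^{*-1}_\lambda(C)$. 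Because $B(z)\in\mathrm{Ker}(1-\lambda)=C_{\mathrm{cyc}}$, applying $I_\lambda$ merely reinterprets it as a $u^0$-term, so $I_{\lambda*}B_\lambda[z]=[B(z)]_{[[u]]}=B_*[z]$ and the middle square commutes.

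The right square is the crux, and it is where the sign $-S_\lambda$ is forced. Writing a cyclic cocycle as $w=Nv$ (possible since $\mathrm{Im}\,N=C_{\mathrm{cyc}}$ by exactness of \eqref{eqn:Connes short exact seq acyclic}) and choosing $y$ with $b^\prime v=(1-\lambda)y$, the connecting-map description gives $S_\lambda[w]=[b y]$. The goal is to show $[u w]=-I_{\lambda*}[S_\lambda w]$ in $\mathrm{HC}^{*+1}_{[[u]]}(C)$, i.e.\ that $by+uw$ is a $(b+uB)$-coboundary. I would produce an explicit primitive: using $sb^\prime=1-b^\prime s$, $Nb^\prime=bN$ and $(1-\lambda)N=0$, one finds $B(y)=Nsb^\prime v=w-b(Nsv)$ and $B(Nsv)=0$, whence
\[
(b+uB)(y+uNsv)=by+uw .
\]
This proves the right square commutes with the stated sign. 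The main obstacle is precisely this last computation: one must track the interplay of $b,b^\prime,s,N,\lambda$ and the $u$-degrees, and it is the choice of the correction term $uNsv$ — which cancels the spurious $u$-contribution from $B(y)$ and whose own $u^2$-contribution vanishes by $(1-\lambda)N=0$ — that accounts for the minus sign in front of $S_\lambda$.
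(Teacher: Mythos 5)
Your proof is correct and follows essentially the same route as the paper: one unwinds the connecting maps of the two short exact sequences to get the cocycle-level formulas $B_\lambda[z]=[Ns(1-\lambda)z]=[B(z)]$ and $S_\lambda[w]=[b(y)]$, and then exhibits an explicit $(b+uB)$-primitive of $b(y)+uw$ to account for the sign in front of $S_\lambda$. The only cosmetic difference is in the right square: the paper normalizes the representative so that $B(y)$ equals the cyclic cocycle on the nose, making $-y$ itself the primitive, whereas your choice of $y$ via $b^\prime v=(1-\lambda)y$ forces the correction term $uNsv$; both computations are valid.
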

	\begin{proof}
		The left square commutes since it commutes at cochain level. To verify commutativity of the other two squares, we need explicit formulas of $B_\lambda$ and $S_\lambda$. Since $(1-\lambda)b=b^\prime(1-\lambda)$, 
		there is a cochain isomorphism $$(C/C_{\mathrm{cyc}},b)\underset{\cong}{\xrightarrow{1-\lambda}}(\mathrm{Im}(1-\lambda),b^\prime).$$ Also note that $C_{\mathrm{cyc}}=\mathrm{Im}N$ and $N|_{C_{\mathrm{cyc}}}=((k+1)\mathrm{id}_{\mathrm{Ker}(1-(-1)^k\tau_k)})_{k\geq0}:C_{\mathrm{cyc}}\to C_{\mathrm{cyc}}$ is a linear isomorphism. By definition, $B_\lambda$  is the composition \begin{equation*}H^*(C,b)\xrightarrow{p_{\lambda*}}H^*(C/C_{\mathrm{cyc}},b)\underset{\cong}{\xrightarrow{1-\lambda}}H^*(\mathrm{Im}(1-\lambda),b^\prime)\underset{\cong}{\xrightarrow{Q_{\lambda*}^{-1}}} \HC_\lambda^{*-1}(C),\end{equation*} where by examining \eqref{eqn:Connes short exact seq acyclic}, $Q_{\lambda*}:\HC_\lambda^{*-1}(C)\xrightarrow{\cong}H^*(\mathrm{Im}(1-\lambda),b^\prime)$ is given on cocycles by
		\begin{equation*}
			Q_\lambda: Z^{*-1}(C_{\mathrm{cyc}},b)\to Z^*(\mathrm{Im}(1-\lambda),b^\prime);\hspace{.5em}x\mapsto (b^\prime\circ(N|_{C_{\mathrm{cyc}}})^{-1})(x).
		\end{equation*}
		Let us calculate that on $Z(C,b)$, 
		\begin{equation}\label{eqn:Connes long exact seq chase}
			Q_\lambda B=Q_\lambda Ns(1-\lambda)=b^\prime s(1-\lambda)=(1-sb^\prime)(1-\lambda)=1-\lambda-s(1-\lambda)b=1-\lambda.
		\end{equation}
		Thus $(B|_{C\to C_{\mathrm{cyc}}})_*=(Q_{\lambda*})^{-1}\circ(1-\lambda)\circ p_{\lambda*}=B_\lambda$, which says the middle square commutes. Similarly, $S_\lambda$ is the composition \begin{equation*}\HC_\lambda^{*-1}(C)\underset{\cong}{\xrightarrow{Q_{\lambda*}}}H^*(\mathrm{Im}(1-\lambda),b^\prime)\underset{\cong}{\xrightarrow{(1-\lambda)^{-1}}} H^*(C/C_{\mathrm{cyc}},b)\xrightarrow{R_{\lambda*}} \HC_\lambda^{*+1}(C),\end{equation*} where $R_{\lambda*}:H^*(C/C_{\mathrm{cyc}},b)\to \HC_\lambda^{*+1}(C)$ is induced by the map
		\begin{equation*}
			R_\lambda:\{y\in C^*\mid b(y)\in C_{\mathrm{cyc}}\}\to Z^{*+1}(C_{\mathrm{cyc}},b);\hspace{.5em}y\mapsto b(y).
		\end{equation*}
		\eqref{eqn:Connes long exact seq chase} also holds on $Z(C/C_{\mathrm{cyc}},b)$, and implies $(1-\lambda)^{-1}Q_{\lambda*}=(B|_{C/C_{\mathrm{cyc}}\to C_{\mathrm{cyc}}})_*^{-1}$.
		
		Therefore for $x\in Z^{*-1}(C_{\mathrm{cyc}},b)$, \begin{equation*}S_\lambda([x])=[b(y)]\in \HC_\lambda^{*+1}(C),\end{equation*}  where $y$ is any choice of elements in $C^*$ satisfying $B(y)=x$ and $(1-\lambda)(b(y))=0$. For such $x$ and $y$, $-b(y)-x\cdot u=(b+uB)(-y)$ is exact in $C^{*+1}[[u]]$, so $I_{\lambda*}\circ(-S_\lambda)=(i^+_*\cdot u)\circ I_{\lambda*}$, i.e. the right square commutes.
	\end{proof}

	\begin{exmp}[$S^1$-equivariant homology theories \cite{Jones cyclic}]\label{exmp:mixed complex equivariant} Let $X$ be a topological $S^1$-space, namely a topological space with a continuous $S^1$-action $F^X:S^1\times X\to X$. Let $(C_*,b)=(S_*(X),\partial)$ be the singular chain complex of $X$, and define the rotation operator $B=J:S_*(X)\to S_{*+1}(X)$ by
		
		\begin{equation}\label{eqn:s1 action J}
			J(a):=F^X_*([S^1]\times a),\hspace{.5em}a\in S_*(X).
		\end{equation}
		Here $[S^1]\in S_1(S^1)$ is the fundamental cycle of $S^1$, namely $[S^1]=\pi^{\Delta^1}_{S^1}:\Delta^1=[0,1]\to\mathbb{R}/\mathbb{Z}=S^1$, and $\times$ is the simplicial cross product induced by standard decomposition of $\Delta^l\times\Delta^k$ into $(k+l)$-simplices (see \cite[page 278]{Hatcher}). 
		Then $\partial J+J\partial=0$ since $$\partial ([S^1]\times a)=\partial[S^1]\times a+(-1)^{\deg[S^1]}[S^1]\times \partial a=-[S^1]\times \partial a.$$ To see $J^2=0$, let us write down the cross product with $[S^1]$ explicitly. For $k\in\mathbb{Z}_{\geq0}$ and $j\in\{0,\dots,k\}$, consider the embeddings $\iota_{k,j}:\Delta^{k+1}\to\Delta^1\times\Delta^k$ defined by
		\begin{equation*}
			\iota_{k,j}(t_1,\dots,t_{k+1}):=(t_{j+1},(t_1,\dots,t_j,t_{j+2},\dots,t_{k+1})),
		\end{equation*}
		then for $(\sigma:\Delta^k\to X)\in S_k(X)$,
		\begin{equation*}
			[S^1]\times\sigma=\sum_{0\leq j\leq k}(-1)^j(\pi^{\Delta^1}_{S^1}\times\sigma)\circ\iota_{k,j}\in S_{k+1}(S^1\times X).
		\end{equation*}
		Let $F^{S^1}:S^1\times S^1\to S^1$, $([t],[t^\prime])\mapsto[t+t^\prime]$ be the rotation $S^1$-action on $S^1$, then
		\begin{equation*}
			F^{S^1}_*([S^1]\times[S^1])=F^{S^1}\circ(\pi^{\Delta^1}_{S^1}\times\pi^{\Delta^1}_{S^1})\circ\iota_{1,0}-F^{S^1}\circ(\pi^{\Delta^1}_{S^1}\times\pi^{\Delta^1}_{S^1})\circ\iota_{1,1}=0.
		\end{equation*}
		From the commutative diagram
		\begin{center}
			\begin{tikzcd}
				S^1\times S^1\times X \arrow[d, "\mathrm{id}_{ S^1}\times F^X"] \arrow[rr, "F^{ S^1}\times\mathrm{id}_X"] & &  S^1\times X \arrow[d, "F^X"]\\
				S^1\times X \arrow[rr, "F^X"]  & & X
			\end{tikzcd}\hspace{.33333em}
			\begin{tikzcd}
				([t],[t^\prime],x) \arrow[d, mapsto] \arrow[r, mapsto]  & ([t+t^\prime],x) \arrow[d, mapsto]\\
				([t], F^X([t^\prime],x)) \arrow[r, mapsto]  &  F^X([t+t^\prime],x)
			\end{tikzcd}
		\end{center}
		we conclude that for any $a\in S_*(X)$,
		\begin{equation*}
			J^2(a)=F^X_*([S^1]\times F^X_*([S^1]\times a))=F^X_*(F^{S^1}_*([S^1]\times[S^1])\times a))=0.
		\end{equation*}
		For the mixed chain complex $(C_*,b,B)=(S_*(X),\partial,J)$, there is a natural isomorphism (\cite[Lemma 5.1]{Jones cyclic}) 
		\begin{equation*}\HC_*^{[v^{-1}]}(S(X))\cong H^{S^1}_*(X):=H_*(X\times_{S^1}E S^1),\end{equation*}
		namely $\HC_*^{[v^{-1}]}(S(X))$ is isomorphic to the $S^1$-equivariant homology of $X$, i.e. homology of the homotopy quotient (Borel construction).
		The other two cyclic homology groups of $(S_*(X),\partial,J)$ are called the \emph{negative} and \emph{periodic $S^1$-equivariant homology} of $X$, and are denoted by 
		\begin{equation*}G^{S^1}_*(X):=\HC_*^{[[v]]}(S(X)),\hspace{.5em}\widehat{H}^{S^1}_*(X):=\HC_*^{[[v,v^{-1}]}(S(X)),\end{equation*} 
		respectively.
		The tautological exact sequence \eqref{eqn:cyclic exact seq tautological} translates into
		\begin{equation}\label{eqn:s1 equivariant tautological}
			\cdots\to G^{S^1}_*(X)\to \wh{H}^{S^1}_*(X)\to H^{S^1}_{*-2}(X)\to G^{S^1}_{*-1}(X)\to\cdots,
		\end{equation}
		and the Connes-Gysin exact sequences \eqref{eqn:cyclic exact seq gysin} translate into
		\begin{subequations}
			\begin{align}\label{eqn:s1 equivariant gysin}
				\cdots\to H_*(X)\to H^{S^1}_*(X)\to H^{S^1}_{*-2}(X)\to H_{*-1}(X)\to\cdots\\\label{eqn:s1 equivariant negative gysin}
				\cdots\to G^{S^1}_{*+2}(X)\to G^{S^1}_*(X)\to H_*(X)\to G^{S^1}_{*+1}(X)\to\cdots.
			\end{align}
		\end{subequations}
		We end this example by mentioning that \eqref{eqn:s1 equivariant gysin} coincides with the Gysin sequence associated to the $S^1$-fibration $X\times E S^1\to X\times_{S^1}E S^1$. 
	\end{exmp}
	\begin{rem}There seems to be no interpretation of $G^{S^1}_*(X)$ and $\wh{H}^{S^1}_*(X)$ as homology groups of some spaces naturally associated to $X$, but there are homotopy theoretic interpretations. For example, when $X$ is a (finite) $S^1$-CW complex, \cite[Lemma 4.4]{ACD} says $G_*^{S^1}(X)$ is naturally isomorphic to the homotopy groups of the homotopy fixed point spectrum $(\mathbf{H}\wedge X_+)^{h S^1}$, where $\mathbf{H}$ is the Eilenberg-MacLane spectrum $\{K(\mathbb{Z},n)\}$.
	\end{rem}
	
	\section{A cocyclic complex and an $\infty$-quasi-isomorphism}\label{section:A cocyclic complex and an infty-quasi-isomorphism}
	Let $X$ be a topological space with $S^1$-action $F^X: S^1\times X\to X$. There is a cocyclic structure on $\{ X\times\Delta^k\}_{k\in\mathbb{Z}_{\geq0}}$, where $\delta_i^{ X\times\Delta^k}:=\mathrm{id}_X\times \delta_i^{\Delta^k}$, $\sigma_i^{ X\times\Delta^k}:=\mathrm{id}_X\times \sigma_i^{\Delta^k}$ ($\delta_i^{\Delta^k}$, $\sigma_i^{\Delta^k}$ are as in \eqref{eqn:cocyclic simplex}), and $\tau_k^{ X\times\Delta^k}: X\times\Delta^k\to  X\times\Delta^k$ is defined by
	\begin{equation*}
		\tau_k^{ X\times\Delta^k}(x,t_1,\dots,t_k):=(F^X([t_1],x),t_2-t_1,\dots,t_k-t_1,1-t_1).\end{equation*} 
	Taking singular chains of the cocyclic space $\{ X\times\Delta^k\}_{k\geq0}$ yields a cocyclic chain complex $\{S_*( X\times\Delta^k)\}_{k\geq0}$. Let us denote the associated mixed complex by
	\begin{equation*}
		(S^{ X\Delta}_*,b,B):=\Big(\prod_{k\geq0}S_{*+k}( X\times\Delta^k),\partial+\delta,Ns(1-\lambda)\Big).
	\end{equation*}
	The $S^1$-action on $X$ extends to $ X\times\Delta^k$ where the $S^1$-action on $\Delta^k$ is trivial, and then the rotation operator $J:S_*(X)\to S_{*+1}(X)$ defined in Example \ref{exmp:mixed complex equivariant} extends component-by-component to $S^{ X\Delta}_*$ by
	\begin{equation*}
		J:S^{ X\Delta}_*\to S^{ X\Delta}_{*+1},\hspace{.5em}(x_k)_{k\geq0}\mapsto (J(x_k))_{k\geq0}.
	\end{equation*}
	By Example \ref{exmp:mixed complex equivariant}, $J^2=0$ and $\partial J+J\partial=0$. Since $S^1$ acts trivially on $\Delta^k$, $J$ commutes with $\delta_i,\sigma_i$. It follows that $\delta J+J\delta=0$ and $J(S^{ X\Delta,\mathrm{nm}})\subset S^{ X\Delta,\mathrm{nm}}$, so $(S^{ X\Delta}_*,b,J)$,  $(S^{ X\Delta,\mathrm{nm}}_*,b,J)$ are also mixed complexes. $J$ also commutes with $\tau_k^{ X\times\Delta^k}$ because of the commutative diagram
	\begin{equation}\label{eqn:tau commute with J}\begin{tikzcd}
			S^1\times X\times\Delta^k \arrow[d, "F^{ X\times\Delta^k}"] \arrow[rr, "\mathrm{id}_{S^1}\times\tau_k^{ X\times\Delta^k}"] & &   S^1\times X\times\Delta^k\arrow[d, "F^{ X\times\Delta^k}"] \\
			X\times\Delta^k \arrow[rr, "\tau_k^{ X\times\Delta^k}"] & &  X\times\Delta^k
	\end{tikzcd}\end{equation}
	\begin{center}\begin{tikzcd}
			([t],x,t_1,\dots,t_k) \arrow[d, mapsto] \arrow[r, mapsto]  & ([t],F^X([t_1],x),t_2-t_1,\dots,1-t_1)\arrow[d, mapsto]\\
			(F^X([t],x),t_1,\dots,t_k) \arrow[r, mapsto] &  (F^X([t+t_1],x),t_2-t_1,\dots,1-t_1),
	\end{tikzcd}\end{center}
	so $JB+BJ=0$.
	We will analyze the relationship between the mixed complexes
	\begin{equation*}
		(S^{ X\Delta}_*,b,B),\hspace{.33333em}(S^{ X\Delta}_*,b,J),\hspace{.33333em}(S_*(X),\partial,J).
	\end{equation*}
	If there is no risk of confusion, we shall write $\delta_i^{ X\times\Delta^k},\sigma_i^{ X\times\Delta^k},\tau_k^{ X\times\Delta^k}$ and the induced maps on singular chain complexes as $\delta_i,\sigma_i,\tau_k$ for short. Note that $\delta_i,\sigma_i$ do not involve $S^1$-action, so if we forget the $S^1$-action, there is still a total complex $(S^{ X\Delta}_*,b=\partial+\delta)$ from the cosimplicial chain complex $\{S_*( X\times\Delta^k)\}_{k\in\mathbb{Z}_{\geq0}}$.
	
	Let us state the main theorem of this section.
	\begin{thm}\label{thm:cocyclic topological isom}
		Let $X$ be a topological $S^1$-space. Then for both of the mixed complex structures $(b,B)$ and $(b,J)$ on $S_*^{ X\Delta}=\prod_{k\geq0}S_{*+k}( X\times\Delta^k)$, there are natural isomorphisms
		\begin{align*}
			\HC_*^{[v^{-1}]}(S^{ X\Delta})&\cong H^{S^1}_*(X)\hspace{.5em}\hbox{as $\mathbb{R}[v^{-1}]$-modules},\\ \HC_*^{[[v,v^{-1}]}(S^{ X\Delta})&\cong \widehat{H}^{S^1}_*(X)\hspace{.5em}\hbox{as $\mathbb{R}[[v,v^{-1}]$-modules},\\ \HC_*^{[[v]]}(S^{ X\Delta})&\cong G^{S^1}_*(X)\hspace{.5em}\hbox{as $\mathbb{R}[[v]]$-modules}.
		\end{align*}
		Furthermore, these isomorphisms throw the (tautological and Connes-Gysin) exact sequences \eqref{eqn:cyclic exact seq tautological}\eqref{eqn:cyclic exact seq gysin} for cyclic homology theories onto the (tautological and Gysin) exact sequences for $S^1$-equivariant homology theories.
	\end{thm}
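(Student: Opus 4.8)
The plan is to deduce everything from Lemma \ref{lem:mixed complex quasi} and Lemma \ref{lem:cyclic long exact seq naturality} by connecting both mixed complexes $(S^{X\Delta}_*,b,B)$ and $(S^{X\Delta}_*,b,J)$ to the mixed complex $(S_*(X),\partial,J)$ through $\infty$-quasi-isomorphisms. Indeed, by Example \ref{exmp:mixed complex equivariant} the three cyclic homologies of $(S_*(X),\partial,J)$ are precisely $H^{\s}_*(X)$, $\wh{H}^{\s}_*(X)$, $G^{\s}_*(X)$, and the tautological and Gysin sequences \eqref{eqn:cyclic exact seq tautological}, \eqref{eqn:cyclic exact seq gysin} for this complex are the $\s$-equivariant sequences \eqref{eqn:s1 equivariant tautological}, \eqref{eqn:s1 equivariant gysin} and \eqref{eqn:s1 equivariant negative gysin}. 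Any $\infty$-quasi-isomorphism $\sum_i v^i f_i$ then induces isomorphisms on all three theories (Lemma \ref{lem:mixed complex quasi}) that are $\mathbb{R}[v^{-1}]$-, $\mathbb{R}[[v,v^{-1}]$- and $\mathbb{R}[[v]]$-linear (the map $\sum_i v^i f_i$ being $\mathbb{R}[v,v^{-1}]$-linear), and are morphisms of the two families of long exact sequences (Lemma \ref{lem:cyclic long exact seq naturality}).

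For the operator $J$ I would use the projection $\mathrm{pr}_0\colon S^{X\Delta}_*=\prod_{k\geq0}S_{*+k}(X\times\Delta^k)\to S_*(X\times\Delta^0)=S_*(X)$ onto the $k=0$ factor. Since the cosimplicial differential $\delta$ strictly increases $k$, the $k=0$ component of $b=\partial+\delta$ is just $\partial$, so $\mathrm{pr}_0$ is a chain map for $(b,\partial)$; and because $J$ acts componentwise it commutes with $\mathrm{pr}_0$ on the nose. Thus $\mathrm{pr}_0$ is a strict morphism of mixed complexes $(S^{X\Delta}_*,b,J)\to(S_*(X),\partial,J)$. That $\mathrm{pr}_0$ is a $b$-quasi-isomorphism I would see from the spectral sequence of the double complex $\big(S_l(X\times\Delta^k),\partial,\delta\big)$: computing $\partial$-homology first yields $H_*(X\times\Delta^k)\cong H_*(X)$ with every coface inducing the identity, so the $E_1$-column is the cochain complex of the constant cosimplicial abelian group $H_*(X)$, whose cohomology is $H_*(X)$ concentrated in cosimplicial degree $0$ with edge map $\mathrm{pr}_{0*}$. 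A completeness argument takes care of the product totalization. This already proves the theorem for $(b,J)$.

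The remaining and harder task is to compare $(b,B)$, where $B=Ns(1-\lambda)$ is Connes' operator of the cocyclic complex of Example \ref{exmp:cocyclic complex}, with $(b,J)$ on the same underlying complex. Passing to the normalized subcomplex (where $B=Ns$), I would build an $\infty$-quasi-isomorphism $\{f_i\}_{i\geq0}\colon(S^{X\Delta}_*,b,B)\to(S^{X\Delta}_*,b,J)$ with $f_0=\mathrm{id}$, solving inductively $bf_i-f_ib=f_{i-1}B-Jf_{i-1}$ for $i\geq1$. The base case $i=1$ requires a chain homotopy $f_1$ with $bf_1-f_1b=B-J$; this exists exactly because $B$ and $J$ induce the same degree $+1$ operator on $H_*(S^{X\Delta}_*,b)\cong H_*(X)$, namely the circle action — the cocyclic counterpart of the classical fact, going back to Jones and Goodwillie, that Connes' operator realizes the $\s$-action. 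Concretely I would produce $f_1$ by a prism operator on $\s\times\Delta^k$ interpolating between the cross-product decomposition that defines $J$ and the cyclic rotation $\tau$ that defines $B$. For $i\geq2$, granting the relations through $i-1$, the right-hand side $f_{i-1}B-Jf_{i-1}$ is closed for the differential $[b,-]$ on the endomorphism complex (using $B^2=J^2=0$ and $bB+Bb=bJ+Jb=0$), and its class must be killed; I would arrange this, together with naturality in $X$, by solving the homotopies functorially via acyclic models, exploiting the contracting homotopy $s$ (with $b^{\prime}s+sb^{\prime}=1$) already used in Example \ref{exmp:cocyclic complex}.

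Assembling the two comparisons, Lemma \ref{lem:mixed complex quasi} gives the three module isomorphisms for both mixed structures $(b,B)$ and $(b,J)$, and Lemma \ref{lem:cyclic long exact seq naturality}, applied to the tautological sequence \eqref{eqn:cyclic exact seq tautological} and to both Gysin sequences \eqref{eqn:cyclic exact seq gysin}, promotes them to isomorphisms of long exact sequences matching those of Example \ref{exmp:mixed complex equivariant}; naturality in $X$ is inherited from that of $\mathrm{pr}_0$ and of the acyclic-model construction. The crux is the third paragraph: although the first homotopy $f_1$ rests on the standard identification of Connes' $B$ with the circle action, the genuine difficulty is to construct the higher homotopies $f_i$ coherently and naturally in $X$, which is precisely why I would pass to the normalized complex and invoke acyclic models rather than a bare obstruction-vanishing count.
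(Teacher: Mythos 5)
Your treatment of the $(b,J)$ structure is exactly the paper's: $\mathrm{pr}_0$ is a strict morphism of mixed complexes and a $b$-quasi-isomorphism by the filtration spectral sequence (Lemma \ref{lem:cocyclic singular quasi}, Corollary \ref{cor:cocyclic singular J quasi}), and your reduction of the whole theorem to producing $\infty$-quasi-isomorphisms, via Lemma \ref{lem:mixed complex quasi} and Lemma \ref{lem:cyclic long exact seq naturality}, is also the paper's strategy. The gap is in your third paragraph, the comparison of $(b,B)$ with $(b,J)$. For $i\geq 2$ the obstruction to solving $bf_i-f_ib=f_{i-1}B-Jf_{i-1}$ is the class of the right-hand side in the degree-$(2i-1)$ homology of the endomorphism complex, which over $\mathbb{R}$ is $\prod_n\mathrm{Hom}(H_n(X),H_{n+2i-1}(X))$; for a general $\s$-space $X$ these groups do not vanish, so ``its class must be killed'' is precisely the point that needs an argument, and your setup does not supply one (for $i=1$ you are fine, since $B-J$ is an honest anti-chain map inducing zero on homology). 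The appeal to acyclic models is not available as stated: the functor $X\mapsto S^{X\Delta}_*$ on $\s$-spaces is not free on any evident set of models, and the contracting homotopy $s$ you invoke contracts $(C,b')$, not $(C,b)$, so it does not produce $[b,-]$-primitives.

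The paper closes exactly this gap by making the construction universal over the single model $\s$ with its rotation action: Lemma \ref{lem:cocyclic infty quasi for S1} produces chains $\xi^n\in S^{\s\Delta,\mathrm{nm}}_{2n}$ with $b(\xi^0)=0$ and $b(\xi^n)=(J-B)(\xi^{n-1})$, the obstructions now living in $H_{2n+1}(S^{\s\Delta,\mathrm{nm}},b)\cong H_{2n+1}(\s)=0$ for $n\geq1$ (the first step being your observation that $B$ and $J$ both realize $[\s]$). Proposition \ref{prop:cocyclic exists mixed infty} then transports this to arbitrary $X$ by $f^\xi_n(a):=\big((F^{X\times\Delta^k}\circ(\nu\times\mathrm{id}_{\Delta^k}))_*(a\times\xi^n_k)\big)_{k\geq0}$, giving an $\infty$-quasi-isomorphism $(S_*(X),\partial,J)\to(S^{X\Delta,\mathrm{nm}}_*,b,B)$ (the opposite direction from yours, with $f^\xi_0$ only chain homotopic to a section of $\mathrm{pr}_0$ rather than the identity), and naturality in $X$ is automatic from this push-forward description. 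This reduction to the universal case over $\s$ is the ingredient your proposal is missing.
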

	\begin{proof}The statement about isomorphisms is a consequence of Lemma \ref{lem:mixed complex quasi}, Corollary \ref{cor:cocyclic singular J quasi} and Proposition \ref{prop:cocyclic exists mixed infty} below. The statement about long exact sequences is then a consequence of Lemma \ref{lem:cyclic long exact seq naturality}.
	\end{proof}
	\begin{cor}\label{cor:cocyclic topological isom connes}For any topological $S^1$-space $X$, Connes' version of cyclic cohomology of the cocyclic chain complex $\{S_*( X\times\Delta^k)\}_{k\in\mathbb{Z}_{\geq0}}$ is naturally isomorphic to the negative $S^1$-equivariant homology of $X$. \qed
	\end{cor}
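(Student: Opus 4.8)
The plan is to obtain the claim as an immediate composition of two isomorphisms that are already in place, so that the corollary reduces to bookkeeping once Theorem \ref{thm:cocyclic topological isom} is available. The relevant mixed structure on $S^{X\Delta}_*=\prod_{k\geq0}S_{*+k}(X\times\Delta^k)$ is $(b,B)$ with $B=Ns(1-\lambda)$ Connes' operator, since this is the mixed chain complex canonically attached to the cocyclic chain complex $\{S_*(X\times\Delta^k)\}_k$, and Connes' version of cyclic cohomology is by definition $\mathrm{HC}^\lambda_*(S^{X\Delta})=H_*(\mathrm{Ker}(1-\lambda),b)$, where $\mathrm{Ker}(1-\lambda)\subset S^{X\Delta}_*$ is the cyclic-invariants subcomplex (cf. the Remark following Example \ref{exmp:cocyclic complex}).

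First I would invoke the chain-level analogue of the Connes isomorphism \eqref{eqn:cyclic Connes isom}, recorded for cocyclic chain complexes in that same Remark: the inclusion of the cyclic-invariants subcomplex into $(S^{X\Delta}_*[[v]],b+vB)$ induces a natural isomorphism $\mathrm{HC}^\lambda_*(S^{X\Delta})\cong\mathrm{HC}_*^{[[v]]}(S^{X\Delta})$. Then I would apply the third isomorphism of Theorem \ref{thm:cocyclic topological isom}, taken for the $(b,B)$ mixed structure, namely $\mathrm{HC}_*^{[[v]]}(S^{X\Delta})\cong G^{\s}_*(X)$, the negative $\s$-equivariant homology of $X$. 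Composing the two yields $\mathrm{HC}^\lambda_*(S^{X\Delta})\cong G^{\s}_*(X)$, which is the assertion.

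For naturality, both constituent isomorphisms are natural in the $\s$-space $X$: the Connes isomorphism arises from the natural inclusion of the cyclic-invariants subcomplex and is therefore compatible with the maps induced by $\s$-equivariant maps, while Theorem \ref{thm:cocyclic topological isom} already asserts naturality of its isomorphisms. Hence the composite is natural, which is all that is claimed. There is no substantive obstacle here, which is why the corollary carries a \qed; the only point demanding care is to use the $(b,B)$ structure with $B$ Connes' operator rather than the $(b,J)$ structure, since Connes' cyclic cohomology is tied to the cyclic symmetry $\lambda$ encoded by $B$ and not to the rotation operator $J$.
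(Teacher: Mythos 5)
Your proposal is correct and matches the paper's intended (unwritten) argument exactly: compose the Connes-type isomorphism $\mathrm{HC}^\lambda_*(S^{X\Delta})\cong\mathrm{HC}_*^{[[v]]}(S^{X\Delta})$ from \eqref{eqn:cyclic Connes isom} (in its chain-complex form) with the third isomorphism $\mathrm{HC}_*^{[[v]]}(S^{X\Delta})\cong G^{\s}_*(X)$ of Theorem \ref{thm:cocyclic topological isom} for the $(b,B)$ structure. Your remark about using $(b,B)$ rather than $(b,J)$ is the right point of care, and nothing further is needed.
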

	\begin{lem}\label{lem:cocyclic singular quasi}
		For any topological space X, the projection chain map
		\begin{equation*}\pr_0:(S^{ X\Delta}_*,b)\to(S_*(X),\partial);\hspace{.5em}(c_k)_{k\geq0}\mapsto c_0\end{equation*} is a quasi-isomorphism.
	\end{lem}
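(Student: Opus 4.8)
The plan is to exhibit $\mathrm{pr}_0$ as a composite of two quasi-isomorphisms, by comparing the double complex $\{S_*(X\times\Delta^k)\}_{k\geq0}$ with the \emph{constant} cosimplicial chain complex $\underline{S_*(X)}$. Regard $(S^{X\Delta}_*,b)$ as the product total complex $\mathrm{Tot}^{\prod}D$ of the double complex $D$ whose $k$-th column is $(S_*(X\times\Delta^k),\partial)$ and whose horizontal differential is the cosimplicial coboundary $\delta$ (pointing from column $k-1$ to column $k$). Since $\mathrm{pr}_X\circ(\mathrm{id}_X\times\delta_i^{\Delta^k})=\mathrm{pr}_X$, the projections $p_k:=(\mathrm{pr}_X)_*\colon S_*(X\times\Delta^k)\to S_*(X)$ assemble into a morphism of double complexes $\Phi\colon D\to D'$, where $D'$ is the constant complex with every column $(S_*(X),\partial)$. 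Because $\Delta^0$ is a point, $\Phi$ is the identity in column $0$, so $\mathrm{pr}_0=\mathrm{proj}_0\circ\mathrm{Tot}^{\prod}\Phi$, where $\mathrm{proj}_0\colon\mathrm{Tot}^{\prod}D'\to S_*(X)$ is projection onto the $0$-th column. It thus suffices to show that both $\mathrm{Tot}^{\prod}\Phi$ and $\mathrm{proj}_0$ are quasi-isomorphisms.

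For $\Phi$: each $\Delta^k$ is contractible, so $p_k$ is a homotopy equivalence and every column of $\mathrm{cone}(\Phi)$ is acyclic. The crux — and the only real obstacle — is that $\mathrm{Tot}^{\prod}\mathrm{cone}(\Phi)$ is acyclic; one cannot simply invoke a column-filtration spectral sequence, since in each fixed total degree the total complex is an infinite product over $k$. Instead I would argue by hand (this is the acyclic assembly lemma): given a total cycle $z=(z_k)_{k\geq0}$, solve $b\,w=z$ by forward induction on $k$, using at the $k$-th stage the exactness of the $k$-th column together with the already-constructed $w_{k-1}$; the relation $\partial\delta+\delta\partial=0$ guarantees that $z_k-\delta(w_{k-1})$ is a $\partial$-cycle at each step. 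The base case $k=0$ works because no $\delta$ enters column $0$, and the induction yields a genuine element $w\in\prod_k$ with $b\,w=z$. Here it is essential that the columns are indexed by $k\geq0$ (so there is a base case) and that $\delta$ increases $k$. Hence $\mathrm{Tot}^{\prod}\mathrm{cone}(\Phi)=\mathrm{cone}(\mathrm{Tot}^{\prod}\Phi)$ is acyclic, so $\mathrm{Tot}^{\prod}\Phi$ is a quasi-isomorphism.

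For $\mathrm{proj}_0$: in $D'$ all cofaces are identities, so the horizontal differential from column $k-1$ to column $k$ equals $\sum_{i=0}^k(-1)^i\mathrm{id}$, hence (up to an overall sign) is $0$ for $k$ odd and an isomorphism for $k$ even. Thus $\ker(\mathrm{proj}_0)$ is the product total complex of the sub-double-complex on columns $k\geq1$, whose rows are the acyclic complexes $S_*(X)\xrightarrow{\cong}S_*(X)\xrightarrow{0}S_*(X)\xrightarrow{\cong}\cdots$ (the columns pair off via the identity maps). A second application of the acyclic assembly lemma, now in the row direction using the left-boundedness $k\geq1$ — or, equivalently, the evident locally finite contracting homotopy inverting each identity — shows this kernel is acyclic, so $\mathrm{proj}_0$ is a quasi-isomorphism. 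Composing with the previous step gives the claim. In summary, the entire difficulty is the product totalization; once it is handled by the forward-substitution argument, both steps reduce to the contractibility of $\Delta^k$ and elementary linear algebra.
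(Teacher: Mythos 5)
Your proof is correct, but it takes a genuinely different route from the paper's. The paper reduces at once to showing $\mathrm{Ker}(\mathrm{pr}_0)=\prod_{k\geq1}S_{*+k}(X\times\Delta^k)$ is acyclic, runs the column-filtration spectral sequence of that product total complex, identifies the $E_1$-page as the alternating-sum-of-identities complex $H_q(X\times\Delta^1)\xrightarrow{\cong}H_q(X\times\Delta^2)\xrightarrow{0}\cdots$ (using $(\delta_i)_*=(\sigma_{k-1})_*^{-1}$ for all $i$), concludes $E_2=0$, and then — precisely because of the infinite product you worry about — invokes the Complete Convergence Theorem \cite[Theorem 5.5.10(2)]{Weibel} for the complete, bounded-above filtration. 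You instead factor $\mathrm{pr}_0$ through the constant cosimplicial complex and handle each factor by hand: the comparison map $\Phi$ via forward substitution down the columns of its cone (legitimate here since the columns are indexed by $k\geq0$ and $\delta$ increases $k$, so the induction has a base case and the solution lives in the product), and the projection off the constant complex via the explicit one-step contracting homotopy inverting the even-indexed coface sums. The two arguments consume the same inputs — contractibility of $\Delta^k$ compatibly with the cofaces, and acyclicity of the alternating-identity complex — and differ only in how the product totalization is justified: your forward-substitution/explicit-homotopy argument is more elementary and self-contained, while the paper's spectral-sequence formulation is shorter and is what generalizes verbatim to Remark \ref{rem:cosimplicial pr quasi} (an arbitrary cosimplicial complex with $\sigma_0\cdots\sigma_{k-1}$ a quasi-isomorphism), though your factorization through the constant object would adapt to that setting as well. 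One small caution: your phrase ``acyclic assembly lemma in the row direction'' for the second step is not the correct citation on its own (for $\mathrm{Tot}^{\prod}$ with the horizontal differential pointing away from the bounded end, exactness of rows does not suffice in general); it is your parenthetical locally finite contracting homotopy, which anticommutes with $\partial$ because it is built from $(\delta')^{-1}$, that actually closes this step.
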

	\begin{proof}
		Since $\pr_0$ is surjective, it suffices to prove $\mathrm{Ker}(\pr_0)_*=\prod_{k\geq1} S_{*+k}( X\times\Delta^k)$ is $b$-acyclic. Let us write $\tilde{S}_*:=\mathrm{Ker}(\pr_0)_*$ and consider the decreasing filtration $\mathcal{F}_p$ $(p\in\mathbb{Z}_{\geq1})$ on $\tilde{S}$ defined by $\mathcal{F}_p\tilde{S}_*:=\prod_{k\geq p} S_{*+k}( X\times\Delta^k)$. The $E_1$-page of the spectral sequence of this filtration is divided into columns indexed by $q\in\mathbb{Z}_{\geq0}$, each of which looks like
		\begin{equation}\label{eqn:total complex E1}
			0\to H_q( X\times\Delta^1)\xrightarrow{\delta_*} H_q(X\times\Delta^2)\xrightarrow{\delta_*} H_q(X\times\Delta^3)\xrightarrow{\delta_*}\cdots.
		\end{equation}
		For each $k\geq1$, the map $p_k:=\sigma_0\sigma_1\cdots\sigma_{k-1}: X\times\Delta^k\to  X\times\Delta^0=X$, $(x,t_1,\dots,t_k)\mapsto(x,0)=x$ is a homotopy equivalence. Since $p_{k+1}=p_{k}\sigma_k$ and $\sigma_j\delta_i=\mathrm{id}$ ($i=j,j+1$), we conclude that for any $k\geq1$ and $0\leq i\leq k$, $$H_*(X\times\Delta^{k-1})\xrightarrow{(\delta_i)_*} H_*( X\times\Delta^k)$$ is an isomorphism such that $(\delta_i)_*=(\sigma_{k-1})_*^{-1}$.
		Then since $$\delta|_{S_q(\Delta^{k-1}\times X)}=(-1)^{q+k}\sum_{i=0}^k(-1)^i\delta_i,$$  \eqref{eqn:total complex E1} is nothing but
		\begin{equation*}
			0\to H_q( X\times\Delta^1)\underset{\cong}{\xrightarrow{(-1)^q(\sigma_1)_*^{-1}}} H_q(X\times\Delta^2)\xrightarrow{0} H_q(X\times\Delta^3)\underset{\cong}{\xrightarrow{(-1)^q(\sigma_3)_*^{-1}}}\cdots.
		\end{equation*}
		Thus all $E_2$-terms vanish. Finally, since the filtration $\mathcal{F}_p$ on $\tilde{S}$ is complete (namely $\tilde{S}=\varprojlim \tilde{S}/\mathcal{F}_p\tilde{S}$) and bounded above (since $\tilde{S}_*=\mathcal{F}_1\tilde{S}_*$), standard convergence theorem \cite[Theorem 5.5.10(2)]{Weibel} gives $H_*(\tilde{S},b)=0$.
	\end{proof}
	\begin{rem}\label{rem:cosimplicial pr quasi}
		The proof of Lemma \ref{lem:cocyclic singular quasi} implies that more generally, for a cosimplicial complex $\{(C(k)_*,\partial)\}_{k\geq0}$, if $\sigma_0\sigma_1\cdots\sigma_{k-1}:C(k)\to C(0)$ is a quasi-isomorphism for each $k\geq1$, then so is $\pr_0:(\prod_{k\geq0}C(k)_{*+k},\partial+\delta)\to(C(0)_*,\partial)$ (\cite[Lemma 8.3]{Irie loop}).
	\end{rem}
	\begin{cor}\label{cor:cocyclic singular J quasi}
		For any topological $S^1$-space $X$,
		$\pr_0:(S^{ X\Delta}_*,b,J)\to(S_*(X),\partial,J)$ is a mixed complex quasi-isomorphism.\qed
	\end{cor}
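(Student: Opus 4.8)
The plan is to unwind the definition of a mixed-complex quasi-isomorphism (recalled just after Lemma \ref{lem:mixed complex quasi}) and observe that every ingredient has already been supplied. According to that definition, it suffices to check two things: first, that $\mathrm{pr}_0$ is a \emph{morphism} of mixed complexes, i.e.\ a single degree $0$ linear map intertwining both the $b$-differentials and the $J$-operators; and second, that $\mathrm{pr}_0$ is a quasi-isomorphism of the underlying $b$-chain complexes. The second point is precisely the content of Lemma \ref{lem:cocyclic singular quasi}, so nothing remains to be verified there.

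For the first point, commutation with $b=\partial+\delta$ is again furnished by Lemma \ref{lem:cocyclic singular quasi}, in which $\mathrm{pr}_0$ is produced as a chain map. Commutation with $J$ I would read off directly from the construction of $J$ on $S^{ X\Delta}_*$. Recall that $J$ was defined component-by-component by $(x_k)_{k\geq0}\mapsto (J(x_k))_{k\geq0}$, where on each factor $X\times\Delta^k$ one uses the $\s$-action $F^{ X\times\Delta^k}$ that acts trivially on the simplex coordinate. Since $\mathrm{pr}_0$ merely extracts the $k=0$ component, and $X\times\Delta^0=X$ carries exactly the original action $F^X$, one computes $\mathrm{pr}_0(J((x_k)_k))=J(x_0)=J(\mathrm{pr}_0((x_k)_k))$, the two occurrences of $J$ agreeing because both are induced by the same action $F^X$. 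Hence $\mathrm{pr}_0\circ J=J\circ \mathrm{pr}_0$, and $\mathrm{pr}_0$ is a morphism of mixed complexes.

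There is no genuine obstacle here: the corollary is an immediate assembly of Lemma \ref{lem:cocyclic singular quasi} with the component-wise definition of $J$. The only step deserving a moment's care is the identification of the two $J$-operators across source and target, which is resolved by noting that the simplex factors carry the trivial $\s$-action and therefore contribute nothing to $J$; this is why the statement can be recorded as a corollary with its proof omitted.
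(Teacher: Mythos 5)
Your proof is correct and matches exactly what the paper intends: the corollary is stated with its proof omitted precisely because it follows immediately from Lemma \ref{lem:cocyclic singular quasi} together with the component-wise definition of $J$ on $S^{X\Delta}_*$, which is what you spell out.
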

	Note that $S_*(X)=S_*^{ X\Delta}(0)=S^{ X\Delta,\mathrm{nm}}_*(0)$ by vacuum normalized condition. Since $(S^{ X\Delta,\mathrm{nm}}_*,b)\hookrightarrow(S^{ X\Delta}_*,b)$ is a quasi-isomorphism, Lemma \ref{lem:cocyclic singular quasi} and Corollary \ref{cor:cocyclic singular J quasi} also hold true if $S_*^{ X\Delta}$ is replaced by $S^{ X\Delta,\mathrm{nm}}_*$. In the following, we may use $S^{ X\Delta,\mathrm{nm}}_*$ to simplify calculation involving Connes' operator $B$. One could also stick with $S^{ X\Delta}_*$, though.
	
	Recall the augmentation map $\varepsilon:S_0(X)\to\mathbb{R}$, $\sum\lambda_i\cdot(\Delta^0\xrightarrow{u_i}X)\mapsto\sum\lambda_i$.
	\begin{lem}\label{lem:cocyclic infty quasi for S1}
		Consider the topological $S^1$-space $S^1$ with rotation action on itself.
		\begin{enumerate}
			\item\label{item:lem:cocyclic infty quasi for S1 1} There exists a sequence of elements $\{\xi^n=(\xi^n_k)_{k\geq0}\in S^{ S^1\Delta,\mathrm{nm}}_{2n}\}_{n\in\mathbb{Z}_{\geq0}}$ such that 
			\begin{equation*}
				\varepsilon(\xi^0_0)=1,\hspace{.5em}b(\xi^0)=0,\hspace{.5em}b(\xi^n)=(J-B)(\xi^{n-1})\hspace{.16667em}(n\geq1).
			\end{equation*}
			
			\item \label{item:lem:cocyclic infty quasi for S1 2} Suppose $\{\xi^n\}_{n\geq0}$, $\{\xi^{\prime n}\}_{n\geq0}$ both satisfy conditions in \eqref{item:lem:cocyclic infty quasi for S1 1}. Then there exists a sequence of elements $\{\eta^n=(\eta^n_k)_{k\geq0}\in S^{ S^1\Delta,\mathrm{nm}}_{2n+1}\}_{n\in\mathbb{Z}_{\geq0}}$ such that 
			\begin{equation*}
				\xi^0-\xi^{\prime 0}=b(\eta^0),\hspace{.5em}\xi^n-\xi^{\prime n}=b(\eta^n)-(J-B)(\eta^{n-1})\hspace{.16667em}(n\geq1).
			\end{equation*}
		\end{enumerate}
	\end{lem}
	\begin{proof}
		(1)  Consider the isomorphisms $(\pr_0)_*:H_0(S^{ S^1\Delta,\mathrm{nm}},b)\xrightarrow{\cong}H_0(S^1)$ from Lemma \ref{lem:cocyclic singular quasi} and $\varepsilon_*:H_0(S^1)\xrightarrow{\cong}\mathbb{R}$ induced by augmentation. Choose a 0-cycle $\xi^0$ in the homology class $(\varepsilon_*\circ(\pr_0)_*)^{-1}(1)\in H_0(S^{ S^1\Delta,\mathrm{nm}},b)$, then $\xi^0=(\xi^0_k)_{k\geq0}$ is as desired. 
		Next, since $bB=-Bb$ and $bJ=-Jb$, $B(\xi^0)$ and $J(\xi^0)$ are 1-cycles. We claim that they are homologous. Since $\pr_0$ is a quasi-isomorphism, it suffices to look at $\pr_0(B(\xi^0))$ and $\pr_0(J(\xi^0))$. By definition,
		\begin{equation*}
			\pr_0(B(\xi^0))=(Ns(\xi^0))_0=\sigma_0\tau_1(\xi^0_1),\hspace{.5em}\pr_0(J(\xi^0))=J(\xi^0_0)=F_*^{S^1}([S^1]\times\xi^0_0).
		\end{equation*} By construction, $\xi^0_0\in S_0(S^1)$ is homologous to the map $\Delta^0\owns{0}\mapsto[0]\in S^1$, and $\xi^0_1\in S_1( S^1\times\Delta^1)$ is homologous to the map $\Delta^1\owns{t}\mapsto([0],t)\in S^1\times\Delta^1$. So $\pr_0(B(\xi^0))$, $\pr_0(J(\xi^0))$ are homologous to
		\begin{align*}
			&\Delta^1\to S^1\times\Delta^1\xrightarrow{\tau_1} S^1\times\Delta^1\xrightarrow{\sigma_0}S^1; \hspace{.5em}t\mapsto([0],t)\mapsto ([t],1-t) \mapsto [t],\\
			&\Delta^1\to\Delta^1\times\Delta^0 \rightarrow  S^1\times S^1 \xrightarrow{F^{S^1}} S^1; \hspace{.33333em}\
			t\mapsto  (t,0) \mapsto ([t],[0])\mapsto[t],
		\end{align*}
		respectively. Namely they are both homologous to $[S^1]$. This proves the existence of $\xi^1\in S_2^{ S^1\Delta,\mathrm{nm}}$ satisfying $b(\xi^1)=(J-B)(\xi^0)$. Now suppose $\xi^0,\xi^1,\dots,\xi^n$ ($n\geq1$) have been chosen as desired, to find $\xi^{n+1}$, simply notice that $(J-B)(\xi^n)$ is a $(2n+1)$-cycle:
		\begin{equation*}
			b((J-B)(\xi^n))=-(J-B)(b(\xi^n))=-(J-B)^2(\xi^{n-1})=0,
		\end{equation*}
		where $(J-B)^2=0$ since $J^2=0$, $B^2=0$ and $JB+BJ=0$ (see \eqref{eqn:tau commute with J}).
		Since $$H_{2n+1}(S^{ S^1\Delta,\mathrm{nm}},b)\cong H_{2n+1}(S^1)=0\ \text{ ($n\geq1$)},$$ $(J-B)(\xi^n)$ is exact, i.e. $\xi^{n+1}$ exists.
		
		(2) By construction, $\xi^0$ is homologous to $\xi^{\prime0}$, so $\eta^0$ exists. To inductively find $\eta^n$ for $n\geq1$, simply check that $\xi^n-\xi^{\prime n}+(J-B)(\eta^{n-1})$ is a $2n$-cycle, which is then exact since $H_{2n}(S^1)=0$ ($n\geq1$).
	\end{proof}
	
	\begin{prop}\label{prop:cocyclic exists mixed infty}
		Let $X$ be a topological $S^1$-space. Denote the transposition $X\times S^1\to S^1\times X$ by $\nu$.
		\begin{enumerate}
			\item Choose $\xi=\{\xi^n\}_{n\geq0}$ as in Lemma \ref{lem:cocyclic infty quasi for S1}\eqref{item:lem:cocyclic infty quasi for S1 1}. Define a sequence of linear maps $f^\xi=\{f^{\xi}_n:S_*(X)\to S^{ X\Delta,\mathrm{nm}}_{*+2n}\}_{n\geq0}$ by
			\begin{equation*}
				f^{\xi}_n(a):=\big((F^{X\times\Delta^k}\circ(\nu\times\mathrm{id}_{\Delta^k}))_*(a\times\xi^n_k)\big)_{k\geq0}.
			\end{equation*}
			Then $f^\xi$ is an $\infty$-quasi-isomorphism from $(S_*(X),\partial,J)$ to $(S^{ X\Delta,\mathrm{nm}}_*,b,B)$.
			\item For two choices $\xi,\xi^\prime$, the $\infty$-quasi-isomorphisms $f^{\xi},f^{\xi^\prime}$ are $\infty$-homotopic.
		\end{enumerate}
		
	\end{prop}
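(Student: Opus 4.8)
The plan is to recognize $f^\xi$ as a special case of a single bilinear ``action'' pairing and to reduce every $\infty$-morphism identity to the relations that $\{\xi^n\}$ already satisfies on the universal $\s$-space. Writing $\mathrm{push}_k:=F^{X\times\Delta^k}\circ(\nu\times\mathrm{id}_{\Delta^k})\colon X\times\s\times\Delta^k\to X\times\Delta^k$, I set, for $a\in S_*(X)$ and $\zeta=(\zeta_k)_{k\geq0}\in S^{\s\Delta}_*$,
\[
\Phi(a,\zeta):=\big((\mathrm{push}_k)_*(a\times\zeta_k)\big)_{k\geq0}\in S^{X\Delta}_*,
\]
so that $f^\xi_n(a)=\Phi(a,\xi^n)$. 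Because the cross product and all pushforwards are chain maps, and because $\delta_i^{X\times\Delta^k}=\mathrm{id}_X\times\delta_i^{\Delta^k}$ and $\sigma_i^{X\times\Delta^k}=\mathrm{id}_X\times\sigma_i^{\Delta^k}$ do not see the $\s$-action, $\Phi(a,-)$ is compatible with the cosimplicial structure; this yields the Leibniz identity
\[
b\,\Phi(a,\zeta)=\Phi(\partial a,\zeta)+(-1)^{|a|}\Phi(a,b\zeta),
\]
with $b=\partial+\delta$ read on the appropriate complex on each side. The whole proof then rests on showing that $\Phi(a,-)$ also intertwines the two degree-one operators.

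The key --- and only geometrically substantial --- step is the pair of identities
\[
B\,\Phi(a,\zeta)=(-1)^{|a|}\Phi(a,B\zeta),\qquad \Phi(Ja,\zeta)=(-1)^{|a|}\Phi(a,J\zeta),
\]
where on the left $B$ and $J$ denote the Connes and rotation operators on $S^{X\Delta}$ (resp.\ on $S_*(X)$), and on the right the corresponding operators on the universal model $S^{\s\Delta}$. For the second identity I would push the fundamental cycle $[\s]$ from the $X$-slot into the $\s$-slot using the associativity square for the action (as in Example \ref{exmp:mixed complex equivariant}); the commutativity $F^{\s}\circ\mathrm{swap}=F^{\s}$, valid because $\s$ is abelian, produces the sign. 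The first identity is the crux, since $B=Ns(1-\lambda)$ is built from $\tau$ and $\sigma$, and $\tau_k^{X\times\Delta^k}$ genuinely mixes the action into the coordinate $t_1$; here one must verify the space-level commutation
\[
\tau_k^{X\times\Delta^k}\circ\mathrm{push}_k=\mathrm{push}_k\circ(\mathrm{id}_X\times\tau_k^{\s\times\Delta^k}),
\]
which again collapses to additivity in $\s$. I would carry out this bookkeeping inside the normalized complex $S^{X\Delta,\mathrm{nm}}$, where $B=Ns$ simplifies matters, and I expect the signs $(-1)^{|a|}$ to be exactly the Koszul signs prescribed by Appendix \ref{section:sign rule}.

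Granting the three intertwining identities, part (i) is formal. For $n=0$ the relation $b\xi^0=0$ gives $b\,f^\xi_0=f^\xi_0\,\partial$, so $f^\xi_0$ is a chain map; for $n\geq1$, substituting the identities into $B\,f^\xi_{n-1}+b\,f^\xi_n-f^\xi_{n-1}J-f^\xi_n\partial$ collapses the whole expression to $(-1)^{|a|}\Phi\big(a,\,b\xi^n-(J-B)\xi^{n-1}\big)$, which vanishes precisely by the defining relation $b\xi^n=(J-B)\xi^{n-1}$ of Lemma \ref{lem:cocyclic infty quasi for S1}(i). To see that $f^\xi_0$ is a quasi-isomorphism, I would compose with $\mathrm{pr}_0$, a quasi-isomorphism by Lemma \ref{lem:cocyclic singular quasi}: one has $\mathrm{pr}_0(f^\xi_0(a))=(\mathrm{push}_0)_*(a\times\xi^0_0)$, and since $\varepsilon(\xi^0_0)=1$ the $0$-cycle $\xi^0_0$ is homologous to the identity point $[0]\in\s$, for which $(\mathrm{push}_0)_*(a\times[0])=a$ because the identity acts trivially. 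Hence $\mathrm{pr}_0\circ f^\xi_0$ is chain homotopic to $\mathrm{id}_{S_*(X)}$, and two-out-of-three forces $f^\xi_0$ to be a quasi-isomorphism. Thus $f^\xi$ is an $\infty$-quasi-isomorphism.

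For part (ii) I would feed the primitives $\{\eta^n\}$ of Lemma \ref{lem:cocyclic infty quasi for S1}(ii) into the same machine, setting $h_n(a):=(-1)^{|a|}\Phi(a,\eta^n)$; the extra sign $(-1)^{|a|}$, forced by the odd degree of $\eta^n$, is exactly what makes the boundary terms cancel in pairs. Applying the three identities to $b\,h_n+h_n\partial+B\,h_{n-1}+h_{n-1}J$ reduces it to $\Phi\big(a,\,b\eta^n-(J-B)\eta^{n-1}\big)=\Phi(a,\xi^n-\xi^{\prime n})=f^\xi_n(a)-f^{\xi^\prime}_n(a)$, which is the required $\infty$-homotopy relation (the $n=0$ case using $\xi^0-\xi^{\prime0}=b\eta^0$). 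The principal difficulty in the whole argument is not conceptual but the disciplined control of Koszul signs through $\tau$, $N$ and $s$; the essential content is the single observation that every structure operator on $S^{X\Delta}$ is the image, under the natural pairing $\Phi$, of the corresponding operator on the universal model $S^{\s\Delta}$.
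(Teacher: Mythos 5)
Your proposal is correct and follows essentially the same route as the paper: your pairing $\Phi$ is exactly the paper's $(F_k)_*(a\times-)$, your two intertwining identities are the paper's space-level identities $\tau_k^{X\times\Delta^k}\circ F_k=F_k\circ(\mathrm{id}_X\times\tau_k^{\s\times\Delta^k})$ and the associativity square for the action, and the reduction of the $\infty$-morphism relations to $b\xi^n=(J-B)\xi^{n-1}$, the $\mathrm{pr}_0$ argument for the quasi-isomorphism, and the formula $h_n(a)=(-1)^{|a|}\Phi(a,\eta^n)$ in part (ii) all match the paper's proof. The only difference is organizational (you isolate the bilinear pairing and state the intertwining identities up front), not mathematical.
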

	\begin{proof}
		(1) The verification of $f^{\xi}_0\circ\partial-b\circ f^{\xi}_0=0$ is simpler than $f^{\xi}_n\circ\partial-b\circ f^{\xi}_n=B\circ f^{\xi}_{n-1}-f^{\xi}_{n-1}\circ J$ ($n\geq1$), so we omit it. Let us write $F_k:=F^{X\times\Delta^k}\circ(\nu\times\mathrm{id}_{\Delta^k}):X\times S^1\times\Delta^k\to X\times\Delta^k$. For $n\geq1$,
		\begin{align*}
			\big((f^{\xi}_n\circ\partial-b\circ f^{\xi}_n)(a)\big)_k&=(F_k)_*(\partial a\times\xi^n_k-\partial(a\times\xi^n_k)-\delta(a\times\xi^n_{k-1}))\\
			&=(F_k)_*((-1)^{|a|}a\times(-\partial\xi^n_k-\delta\xi^n_{k-1}))\\
			&=(-1)^{|a|}(F_k)_*(a\times(B(\xi^{n-1}_{k+1})-J(\xi^{n-1}_k))),
		\end{align*}
		where the last equality follows from $b(\xi^n)=(J-B)(\xi^{n-1})$.
		Now introduce maps
		\begin{align*}
			G_k:&\hspace{.16667em}X\times S^1\times\Delta^k\to X\times\Delta^k\\
			&(x,[t],t_1,\dots,t_k)\mapsto(F^X([t+t_1],x),t_2-t_1,\dots,1-t_1),\\
			H_k:&\hspace{.16667em}X\times S^1\times S^1\times\Delta^k\to X\times\Delta^k\\
			&(x,[t],[t^\prime],t_1,\dots,t_k)\mapsto(F^X([t+t^\prime],x),t_1,\dots,t_k),
		\end{align*}
		then 
		\begin{align*}
			G_k&=F_k\circ(\mathrm{id}_X\times\tau_k^{ S^1\times\Delta^k})=\tau_k^{X\times\Delta^k}\circ F_k,\\
			H_k&=F_k\circ(\mathrm{id}_X\times F^{ S^1\times\Delta^k})=F_k\circ(F^X\times\mathrm{id}_{ S^1\times\Delta^k})\circ(\nu\times\mathrm{id}_{ S^1\times\Delta^k}).
		\end{align*}
		It follows that
		\begin{align*}
			(-1)^{|a|}(F_k)_*(a\times B(\xi^{n-1}_{k+1}))&=B((F_k)_*(a\times\xi^{n-1}_{k+1})),\\
			(-1)^{|a|}(F_k)_*(a\times J(\xi^{n-1}_{k}))&=(F_k)_*(J(a)\times \xi^{n-1}_{k}).
		\end{align*}
		This implies $(f^{\xi}_n\circ\partial-b\circ f^{\xi}_n)(a)=(B\circ f^{\xi}_{n-1}-f^{\xi}_{n-1}\circ J)(a)$, so $f^{\xi}$ is an $\infty$-morphism. It remains to show that $f^{\xi}_0$ is a quasi-isomorphism. Since $\xi^{0}_0$ is homologous to the 0-chain $\Delta^0\to S^1$, ${0}\mapsto[0]$, $\pr_0\circ f^{\xi}_0$ is chain homotopic to $\mathrm{id}_{S_*(X)}$. Since $\pr_0$ is a quasi-isomorphism, so is $f^{\xi}_0$.
		
		(2) Choose $\eta$ as in Lemma \ref{lem:cocyclic infty quasi for S1}\eqref{item:lem:cocyclic infty quasi for S1 2}. Define a sequence of linear maps $h^{\eta}=\{h^{\eta}_n:S_*(X)\to S^{ X\Delta,\mathrm{nm}}_{*+2n+1}\}_{n\geq0}$ by
		\begin{equation*}
			h^{\eta}_n(a):=(-1)^{|a|}\big((F^{X\times\Delta^k}\circ(\nu\times\mathrm{id}_{\Delta^k}))_*(a\times\eta^n_k)\big)_{k\geq0}.
		\end{equation*}
		Then similar calculation as before shows $h^{\eta}$ is an $\infty$-homotopy between $f^\xi$ and $f^{\xi^\prime}$.
	\end{proof}
	
	\section{The story of differentiable spaces} \label{section:differentiable space}
	\subsection{Differentiable spaces and de Rham chains} Materials in this subsection are collected from Irie \cite{Irie loop}. The notion of differentiable spaces is a modification of that utilized by Chen \cite{Chen differentiable space}, and the notion of de Rham chains is inspired by an idea of Fukaya \cite{Fukaya loop}.
	
	Let $\mathscr{U}:=\coprod_{n\geq m\geq 0}\mathscr{U}_{n,m}$ where $\mathscr{U}_{n,m}$ denotes the set of oriented $m$-dimensional $C^\infty$-submanifolds of $\mathbb{R}^n$. Let $X$ be a set. A \emph{differentialble structure} $\mathscr{P}(X)$ on $X$ is a family of maps $\{(U,\vphi)\}$ called \emph{plots}, such that:
	\begin{itemize}
		\item Every plot is a map $\vphi$ from some $U\in\mathscr{U}$ to $X$;
		\item If $\vphi:U\to X$ is a plot, $U^\prime\in\mathscr{U}$ and $\theta:U^\prime\to U$ is a submersion, then $\vphi\circ\theta:U^\prime\to X$ is a plot.
	\end{itemize}
	A \emph{differentiable space} is a pair of a set and a differentiable structure on it. A map $f:X\to Y$ between differentiable spaces is called \emph{smooth}, if $(U,f\circ\vphi)\in\mathscr{P}(Y)$ for any $(U,\vphi)\in\mathscr{P}(X)$. A subset of a differentiable space and the product of a family of differentiable spaces admit naturally induced differentiable structures (\cite[Example 4.2(iii)(iv)]{Irie loop}).
	\begin{rem}\label{rem:differentiable space topology} Differentiable structures are defined on \emph{sets} rather than topological spaces.  For later purpose, we say a differentiable structure and a topology on a set $X$ are \emph{compatible} if every plot is continuous.	\end{rem}
	
	\begin{exmp}\label{exmp:differentiable space}Here are some important examples of differentiable spaces.
		\begin{enumerate}
			\item Let $M$ be a $C^\infty$-manifold. Consider two differentiable structures on it:
			\begin{enumerate}
				\item \label{item:differentiable structure 1a}Define $(U,\vphi)\in\mathscr{P}(M)$ if $\vphi:U\to M$ is a $C^\infty$-map;
				\item Define $(U,\vphi)\in\mathscr{P}(M_{\mathrm{reg}})$ if $\vphi:U\to M$ is a ($C^\infty$-)submersion.
			\end{enumerate}
			The set-theoretic identity map $\mathrm{id}_M:M_{\mathrm{reg}}\to M$ is smooth, but its inverse is not.
			\item Let $\mathcal{L}M:=C^\infty(\mathbb{S}^1,M)$ be the smooth free loop space of $M$, where $\mathbb{S}^1=\mathbb{R}/\mathbb{Z}$. There is a differentiable structure $\mathscr{P}(\mathcal{L}^M)$ on $\mathcal{L}M$ defined by: $(U,\vphi)\in\mathscr{P}(\mathcal{L})$ iff $U\times\mathbb{S}^1\to M$, $(u,[t])\mapsto \vphi(u)(t)$ is a $C^\infty$-map.
			\item For each $k\in\mathbb{Z}_{\geq0}$, the smooth free loop space of $M$ with $k$ inner marked points, denoted by $\mathcal{L}_{k+1}M$, is defined as
			\begin{equation*}
				\{(\gamma,t_1,\dots,t_k)\in\mathcal{L}M\times\Delta^k\mid\partial_t^m\gamma(0)=\partial_t^m\gamma(t_j)=0\hspace{.16667em}(1\leq\forall j\leq k,\hspace{.16667em}\forall m\geq1)\}.
			\end{equation*}
			It has induced differentiable structure $\mathscr{P}(\mathcal{L}_{k+1}^M)$ as a subspace of $\mathcal{L}^M\times\Delta^k$, where $\Delta^k$ is viewed as a subspace of $\mathbb{R}^k$ with the differentiable structure in \eqref{item:differentiable structure 1a}.
			\item The smooth free Moore path space of $M$, denoted by $\Pi M$, is defined as
			\begin{equation*}
				\{(T,\gamma)\mid T\in\mathbb{R}_{\geq0},\hspace{.16667em}\gamma\in C^\infty([0,T],M),\hspace{.16667em} \partial_t^m\gamma(0)=\partial_t^m\gamma(T)=0\hspace{.16667em}(\forall m\geq1)\}.
			\end{equation*}
			Consider two differentiable structures $\mathscr{P}(\Pi^M),\mathscr{P}(\Pi_{\mathrm{reg}}^M)$ on $\Pi M$:
			\begin{enumerate}
				\item Define $(U,\vphi)\in\mathscr{P}(\Pi^M)$ if $\vphi=(\vphi_T,\vphi_\gamma):U\to\Pi M$ satisfies the following conditions: 
				\begin{itemize}
					\item $\vphi_T:U\to\mathbb{R}_{\geq0}$ is a $C^\infty$-map.
					\item The map \begin{equation*}\tilde{U}:=\{(u,t) \mid u\in U,t\in[0,\vphi_T(u)]\}\to M;\hspace{.5em}(u,t)\mapsto \vphi_\gamma(u)(t)\end{equation*} extends to a $C^\infty$-map from an open neighborhood of $\tilde{U}$ in $U\times\mathbb{R}$ to $M$.
				\end{itemize}
				\item Define $(U,\vphi)\in\mathscr{P}(\Pi_{\mathrm{reg}}^M)$ if: $(U,\vphi)\in\mathscr{P}(\Pi^M)$ and the map $U\to M$, $u\mapsto \vphi_\gamma(u)(t_0)$ is a submersion for $t_0=0,T$.
			\end{enumerate}
			\item For each $k\in\mathbb{Z}_{\geq0}$, the smooth free Moore loop space of $M$ with $k$ inner marked points, denoted by $\mathscr{L}_{k+1}M$, is defined as 
			\begin{align*}
				\{((T_0,\gamma_0),\dots,(T_k,\gamma_k))\in(\Pi M)^{k+1}\mid \hspace{.16667em}&
				\gamma_j(T_j)=\gamma_{j+1}(0)\hspace{.16667em}(0\leq j\leq k-1),\\ & \gamma_k(T_k)=\gamma_0(0)\},\\
				=\{(T,\gamma,t_1,\dots,t_k)\in\Pi M\times\mathbb{R}^k\mid\hspace{.16667em} &0\leq t_1\leq\dots\leq t_k\leq T,\hspace{.16667em}\gamma(0)=\gamma(T),\\& \partial_t^m\gamma(t_j)=0\hspace{.16667em}(1\leq \forall j\leq k,\hspace{.16667em}\forall m\geq1)\}.
			\end{align*}
			Apparently there are two ways to endow the set $\mathscr{L}_{k+1}M$ with differentiable structures, namely as a subset of $(\Pi M)^{k+1}$ or of $\Pi M\times\mathbb{R}^k$. It basically follows from \cite[Lemma 7.2]{Irie loop} that these two ways are equivalent. Let us denote by $\mathscr{L}_{k+1}^M$ (resp. $\mathscr{L}_{k+1,\mathrm{reg}}^M$) the differentiable space obtained from $\Pi^M$ (resp. $\Pi_{\mathrm{reg}}^M$).
		\end{enumerate}
		Note that the inclusion of sets 
		$\mathcal{L}_{k+1}M=\{T=1\}\subset\mathscr{L}_{k+1}M$, induced by the inclusion $\mathcal{L}_1M\times\Delta^k\subset\Pi M\times \mathbb{R}^k$,
		is also an inclusion of differentiable spaces $\mathcal{L}_{k+1}^M\hookrightarrow\mathscr{L}_{k+1}^M$.
	\end{exmp}
	
	The \emph{de Rham chain complex} $(C^{\mathrm{dR}}_*(X),\partial)$ of a differentiable space $X$ is defined as follows. 
	For $n\in\mathbb{Z}$, let $\tilde{C}^{\mathrm{dR}}_n(X):=\bigoplus_{(U,\vphi)\in\mathscr{P}(X)} \Omega_c^{\dim U-n}(U).$ For any $(U,\vphi)\in\mathscr{P}(X)$ and $\omega\in\Omega_c^{\dim U-n}(U)$, denote the image of $\omega$ under the natural inclusion $\Omega_c^{\dim U-n}(U)\hookrightarrow\tilde{C}^{\mathrm{dR}}_n(X)$ by $(U,\vphi,\omega)$. Let $Z_n\subset\tilde{C}^{\mathrm{dR}}_n(X)$ be the subspace spanned by all elements of the form $(U,\vphi,\pi_!\omega^\prime)-(U^\prime,\vphi\circ\pi,\omega^\prime)$,	where $(U,\vphi)\in\mathscr{P}(X)$, $U^\prime\in\mathscr{U}$, $\omega^\prime\in\Omega_c^{\dim U^\prime-n}(U^\prime)$, and $\pi:U^\prime\to U$ is a submersion. Then define $C_n^{\mathrm{dR}}(X):=\tilde{C}_n^{\mathrm{dR}}(X)/Z_n$.	By abuse of notation we still denote the image of $(U,\vphi,\omega)$ under the quotient map $\tilde{C}_n^{\mathrm{dR}}(X)\to C_n^{\mathrm{dR}}(X)$ by $(U,\vphi,\omega)$. Then $\partial:C_*^{\mathrm{dR}}(X)\to C_{*-1}^{\mathrm{dR}}(X)$ is defined by $\partial(U,\vphi,\omega):=(U,\vphi,d\omega)$.
	The homology of $(C_*^{\mathrm{dR}}(X),\partial)$ is denoted by $H_*^{\mathrm{dR}}(X)$.
	\begin{rem}
		For any oriented $C^\infty$-manifold $M$, there exists $n\in\mathbb{Z}_{\geq0}$ and an embedding $\iota:M\hookrightarrow \mathbb{R}^n$. Then $(\iota(M),\iota^{-1})\in\mathscr{P}(M_{\mathrm{reg}})\subset\mathscr{P}(M)$, and $(\iota(M),\iota^{-1},(\iota^{-1})^*\omega)\in C^{\mathrm{dR}}_*(M_{\mathrm{reg}})\subset C^{\mathrm{dR}}_*(M) $ for any $\omega\in\Omega_c(M)$. Such a de Rham chain is independent of choices of $n$ and $\iota$, and by abuse of notation we write it as $(M,\mathrm{id}_M,\omega)$. If $M$ is closed oriented, we call $(M,\mathrm{id}_M,1)$ the \emph{fundamental de Rham cycle} of $M$ (or $M_{\mathrm{reg}}$).
	\end{rem}
	
	Let $X,Y$ be differentiable spaces. The \emph{cross product} on de Rham chains is a chain map $C_k^{\mathrm{dR}}(X)\otimes C_l^{\mathrm{dR}}(Y)\to C_{k+l}^{\mathrm{dR}}(X\times Y)$, defined by
	\begin{equation}\label{eq:cross product de Rham chain}
		(U,\vphi,\omega)\times (V,\eta,\psi):=(-1)^{l\cdot\dim U}(U\times V,\vphi\times\psi,\omega\times\eta).
	\end{equation}
	\subsection{$S^1$-equivariant homology of differentiable $S^1$-spaces}\label{subsection:s1 equiv homology differentiable space}
	Let $X$ be a \emph{differentiable $S^1$-space}, namely $X$ is a differentiable space with a smooth map $F^X: S^1\times X\to X$, where $S^1$ is endowed with the differentiable structure in Example \ref{exmp:differentiable space}\eqref{item:differentiable structure 1a}. Let $(S^1,\mathrm{id}_{S^1},1)\in C^{\mathrm{dR}}_1(S^1)$ be the fundamental de Rham 1-cycle of $S^1$. Define
	\begin{equation*}
		J:C^{\mathrm{dR}}_*(X)\to C^{\mathrm{dR}}_{*+1}(X);\hspace{.5em}a\mapsto F^X_*((S^1,\mathrm{id}_{S^1},1)\times a),
	\end{equation*}
	then $J$ is clearly an anti-chain map. We claim $J^2=0$. Let $g: S^1\times S^1\to S^1$ be the smooth map $([t],[t^\prime])\mapsto[t+t^\prime]$, then by the same arguments as in Example \ref{exmp:mixed complex equivariant}, to see $J^2=0$, it suffices to prove $g_*((S^1,\mathrm{id}_{S^1},1)\times(S^1,\mathrm{id}_{S^1},1))=0\in C^{\mathrm{dR}}_2(S^1)$. This is easy, as we can see the following:
	\begin{align*}
		g_*((S^1,\mathrm{id}_{S^1},1)\times(S^1,\mathrm{id}_{S^1},1))&\overset{\eqref{eq:cross product de Rham chain}}{=\joinrel=\joinrel=}-g_*(( S^1\times S^1,\mathrm{id}_{ S^1\times S^1},1))\\
		&=-( S^1\times S^1,\mathrm{id}_{S^1}\circ g,1)=-(S^1,\mathrm{id}_{S^1},g_!(1))=0.
	\end{align*}
	The middle equality on the second line holds since $g$ is a submersion. Thus $(C^{\mathrm{dR}}_*(X),\partial,J)$ is a mixed chain complex. One can then define the positive (ordinary), periodic and negative ``$S^1$-equivariant de Rham homology'' of $X$ as the $\HC^{[v^{-1}]}_*$, $\HC^{[[v]]}_*$ and $\HC^{[[v,v^{-1}]}_*$ versions of cyclic homology of $(C_*^{\mathrm{dR}}(X),\partial,J)$. 
	
	Consider $\Delta^k$ as a differentiable subspace of $\mathbb{R}^k$. Then the cocyclic maps $\delta_i,\sigma_i,\tau_k$ among $\{X\times\Delta^k\}_{k\in\mathbb{Z}_{\geq0}}$, defined by the same formulas as in Section \ref{section:A cocyclic complex and an infty-quasi-isomorphism}, are smooth maps between differentiable spaces. 
	So $\{X\times\Delta^k\}_{k\in\mathbb{Z}_{\geq0}}$ is a cocyclic differentiable space and $\{C_*^{\mathrm{dR}}(X\times\Delta^k)\}_{k\in\mathbb{Z}_{\geq0}}$ is a cocyclic chain complex, which gives rise to a mixed complex \begin{equation*}(C^{\mathrm{dR},X\Delta}_*,b,B):=\Big(\prod_{k\geq0}C^{\mathrm{dR}}_{*+k}( X\times\Delta^k),\partial+\delta,Ns(1-\lambda)\Big).\end{equation*} The smooth $S^1$-action $ S^1\times X\to X$ also extends trivially to $ S^1\times X\times\Delta^k\to X\times\Delta^k$ and gives a mixed complex $(C^{X\Delta}_*,b,J)$.
	
	There is counterpart of Theorem \ref{thm:cocyclic topological isom} for differentiable $S^1$-spaces, whose proof is also similar. We omit the details since we will not make essential use of it. 	     
	
	The \emph{smooth singular chain complex}  $(C^{\mathrm{sm}}_*(X),\partial)$ of a differentiable space $X$, introduced in \cite[Section 4.7]{Irie loop}, is defined in a similar way as the singular chain complex of topological spaces, except that only ``strongly smooth'' maps $\Delta^k\to X$ are considered. 
	The homology of $(C^{\mathrm{sm}}_*(X),\partial)$ is denoted by $H^{\mathrm{sm}}_*(X)$.
	
	Smooth singular homology is related to singular homology and de Rham homology in the following way.
	\begin{itemize}
		\item Let $X$ be a differentiable space with a fixed compatible topology (Remark \ref{rem:differentiable space topology}). Then every strongly smooth map $\Delta^k\to X$ is continuous, hence there is a natural inclusion $(C^{\mathrm{sm}}_*(X),\partial)\hookrightarrow(S_*(X),\partial)$.
		\item $C_*^{\mathrm{sm}}$, $C_*^{\mathrm{dR}}$ are functors from the category of differentiable spaces to the category of chain complexes. Given a cocycle $u=(u_k)_{k\geq0}\in C_0^{\mathrm{dR},\mathrm{pt}\Delta}=\prod_{k\geq0}C_k^{\mathrm{dR}}(\Delta^k)$ in the class $1\in\mathbb{R}\cong H_0^{\mathrm{dR}}(\mathrm{pt})\cong H_0^{\mathrm{dR},\mathrm{pt}}$, there is a natural transformation $\iota^u:C_*^{\mathrm{sm}}\to C_*^{\mathrm{dR}}$ defined by 
		$\iota^u(X)_k: C_k^{\mathrm{sm}}(X)\to C_k^{\mathrm{dR}}(X)$, $\sigma\mapsto \sigma_*(u_k)$. The homotopy class of $\iota^u(X)$ does not depend on $u$ (since $H_n^{\mathrm{dR},\mathrm{pt}}=0$ when $n>0$).
	\end{itemize}
	\begin{assm}\label{assm:differentiable space compatible}
		$X$ is a differentiable space with a fixed compatible topology, such that the chain maps discussed above induce isomorphisms $H_*(X)\xleftarrow{\cong} H^{\mathrm{sm}}_*(X)\xrightarrow{\cong} H_*^{\mathrm{dR}}(X)$.
	\end{assm}
	\begin{prop}\label{prop:differentiable topological s1 isom}
		Let $X$ be a set which satisfies Assumption \ref{assm:differentiable space compatible} and admits an $S^1$-action that is both smooth (with respect to differentiable structure) and continuous (with respect to topology). Then there are natural isomorphisms 
		$\HC^{[v^{-1}]}_*(C^{\mathrm{dR},X\Delta})\cong H^{S^1}_*(X)$, $\HC^{[[v,v^{-1}]}_*(C^{\mathrm{dR},X\Delta})\cong \widehat{H}^{S^1}_*(X)$, as well as $\HC^{\lambda}_*(C^{\mathrm{dR},X\Delta})\cong \HC^{[[v]]}_*(C^{\mathrm{dR},X\Delta})\cong G^{S^1}_*(X)$, which are compatible with tautological and Connes-Gysin long exact sequences.
	\end{prop}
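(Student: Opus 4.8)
The plan is to compare the de Rham cocyclic complex $C^{dR,X\Delta}$ with the singular cocyclic complex $S^{X\Delta}$ by interpolating through the smooth singular cocyclic complex, and then to invoke Theorem \ref{thm:cocyclic topological isom}. The key observation is that the Connes operator $B=Ns(1-\lambda)$ is built entirely out of the cocyclic structure maps $\tau_k,\sigma_i$, so \emph{any} natural transformation of cocyclic chain complexes commutes with $B$ (and trivially with $b=\partial+\delta$). Since the hypothesis provides an $\s$-action $F^X$ that is simultaneously smooth and continuous, all structure maps $\delta_i,\sigma_i,\tau_k$ on $\{X\times\Delta^k\}_{k}$ are both smooth and continuous; hence the inclusion $C^{\mathrm{sm}}_*\hookrightarrow S_*$ and the map $\iota^u\colon C^{\mathrm{sm}}_*\to C^{dR}_*$ are natural transformations of cocyclic chain complexes. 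They therefore induce \emph{strict} morphisms of mixed complexes
$$(S^{X\Delta}_*,b,B)\ \xleftarrow{\ \iota\ }\ (C^{\mathrm{sm},X\Delta}_*,b,B)\ \xrightarrow{\ \iota^u\ }\ (C^{dR,X\Delta}_*,b,B).$$

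Next I would verify that both of these are $b$-quasi-isomorphisms. For each of the three functors $S_*,C^{\mathrm{sm}}_*,C^{dR}_*$ the projection $\mathrm{pr}_0$ onto the $k=0$ summand is a $b$-quasi-isomorphism: by Remark \ref{rem:cosimplicial pr quasi} this reduces to checking that $\sigma_0\sigma_1\cdots\sigma_{k-1}\colon C_*(X\times\Delta^k)\to C_*(X)$ is a quasi-isomorphism for each $k\geq1$, which is the homotopy invariance of the respective theory (Lemma \ref{lem:cocyclic singular quasi} for $S_*$, and the corresponding statements for $C^{\mathrm{sm}}_*$ and $C^{dR}_*$ from Irie \cite{Irie loop}). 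As $\mathrm{pr}_0$ acts componentwise it is natural, so it fits into commuting squares whose $k=0$ rows are the maps $C^{\mathrm{sm}}_*(X)\hookrightarrow S_*(X)$ and $\iota^u\colon C^{\mathrm{sm}}_*(X)\to C^{dR}_*(X)$; these are quasi-isomorphisms by Assumption \ref{assm:differentiable space compatible}. A two-out-of-three argument then shows that the two total-complex maps above are $b$-quasi-isomorphisms.

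The rest is formal. By Lemma \ref{lem:mixed complex quasi} the two mixed-complex quasi-isomorphisms induce isomorphisms on all three cyclic homologies $\mathrm{HC}^{[v^{-1}]}_*,\mathrm{HC}^{[[v,v^{-1}]}_*,\mathrm{HC}^{[[v]]}_*$, and by Lemma \ref{lem:cyclic long exact seq naturality} these isomorphisms respect the tautological and Connes-Gysin long exact sequences. Composing the resulting zigzag with the isomorphisms $\mathrm{HC}^{[v^{-1}]}_*(S^{X\Delta})\cong H^\s_*(X)$, $\mathrm{HC}^{[[v,v^{-1}]}_*(S^{X\Delta})\cong\widehat{H}^\s_*(X)$, $\mathrm{HC}^{[[v]]}_*(S^{X\Delta})\cong G^\s_*(X)$ of Theorem \ref{thm:cocyclic topological isom} yields the stated natural isomorphisms for $C^{dR,X\Delta}$, compatibly with long exact sequences; the final identification $\mathrm{HC}^\lambda_*(C^{dR,X\Delta})\cong\mathrm{HC}^{[[v]]}_*(C^{dR,X\Delta})$ is the Connes isomorphism \eqref{eqn:cyclic Connes isom}. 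The only non-formal ingredient is the homotopy invariance of $C^{\mathrm{sm}}_*$ and $C^{dR}_*$ needed to make $\mathrm{pr}_0$ a quasi-isomorphism; granting that (available from Irie's work), the argument is a diagram chase assembled entirely from results already established.
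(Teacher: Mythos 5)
Your proposal is correct and follows essentially the same route as the paper: interpolate between $S^{X\Delta}$ and $C^{dR,X\Delta}$ via the smooth singular cocyclic complex $C^{\mathrm{sm},X\Delta}$, observe that the comparison maps are cocyclic (hence strict mixed-complex morphisms), use $\mathrm{pr}_0$ together with Assumption \ref{assm:differentiable space compatible} and a two-out-of-three argument to see they are $b$-quasi-isomorphisms, and then conclude by Lemma \ref{lem:mixed complex quasi}, Theorem \ref{thm:cocyclic topological isom} and naturality of the long exact sequences. The one ingredient you correctly flag as non-formal --- that $\sigma_0\cdots\sigma_{k-1}$ is a quasi-isomorphism for $C^{\mathrm{sm}}_*$ and $C^{dR}_*$ so that Remark \ref{rem:cosimplicial pr quasi} applies --- is exactly what the paper also relies on via Irie's results.
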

	\begin{proof}
		Consider the mixed complex $(C^{\mathrm{sm},X\Delta}_*,b,B)$ associated to the cocyclic complex $\{C^{\mathrm{sm}}_{*+k}(X\times\Delta^k\}_k$.
		For any $k\geq0$, $X\times\Delta^k$ is a differentiable and topological $S^1$-space satisfying Assumption \ref{assm:differentiable space compatible}. By construction, the inclusions $(C^{\mathrm{sm}}_*(X\times\Delta^k),\partial)\hookrightarrow(S_*(X\times\Delta^k),\partial)$ commute with the cocyclic maps $\delta_i,\sigma_i,\tau_k$, so there is an inclusion $(C^{\mathrm{sm},X\Delta}_*,b,B)\hookrightarrow(S^{X\Delta}_*,b,B)$. On the other hand, given a choice of $u$, the chain maps $\iota^u(X\times\Delta^k)_*:C^{\mathrm{sm}}_*(X\times\Delta^k)\to C^{\mathrm{dR}}_*(X\times\Delta^k)$ commute with cocyclic maps since $\iota^u$ is a natural transformation, so we obtain a morphism $(C^{\mathrm{sm},X\Delta}_*,b,B)\to(C^{\mathrm{dR},X\Delta}_*,b,B)$, whose homotopy class does not depend on $u$. Now consider the following commutative diagram of chain maps
		\begin{center}\begin{tikzcd}
				(S^{X\Delta}_*,b) \arrow[d, "\pr_0"]& (C^{\mathrm{sm},X\Delta}_*,b) \arrow[l] \arrow[r] \arrow[d, "\pr_0"]& (C^{\mathrm{dR},X\Delta}_*,b) \arrow[d, "\pr_0"]\\
				(S_*(X),\partial) & (C^{\mathrm{sm}}_*(X),\partial) \arrow[l] \arrow[r] & (C^{\mathrm{dR}}_*(X),\partial).
		\end{tikzcd}\end{center}
		By Lemma \ref{lem:cocyclic singular quasi} and Remark \ref{rem:cosimplicial pr quasi}, all vertical arrows are quasi-isomorphisms, and by assumption, the arrows in the second row are quasi-isomorphisms. Thus the arrows in the first row are quasi-isomorphisms. In this way we obtain quasi-isomorphisms of mixed complexes $(S^{X\Delta}_*,b,B)\leftarrow(C^{\mathrm{sm},X\Delta}_*,b,B)\to(C^{\mathrm{dR},X\Delta}_*,b,B)$, and get the desired isomorphisms by Lemma \ref{lem:mixed complex quasi}, Theorem \ref{thm:cocyclic topological isom} and Corollary \ref{cor:cocyclic topological isom connes}. Compatibility with long exact sequences is a consequence of Lemma \ref{lem:cyclic long exact seq naturality} and Lemma \ref{lem:Connes exact seq coincide}.
	\end{proof}
	\begin{exmp}\label{exmp:cyclic s1 de Rham}Let $M$ be a closed oriented $C^\infty$-manifold. It is proved in \cite[Section 5, Section 6]{Irie loop} that Assumption \ref{assm:differentiable space compatible} is satisfied for $M,M_{\mathrm{reg}}$ (with manifold topology) and $\mathcal{L}^M$ (with Fr\'echet topology) in Example \ref{exmp:differentiable space}. Moreover, $\mathcal{L}^M$ is an $S^1$-space that Propositon \ref{prop:differentiable topological s1 isom} applies to.
	\end{exmp}
	\subsection{Application to marked Moore loop spaces} Consider the various versions of smooth loop spaces in Example \ref{exmp:differentiable space}. The following lemma is proved in \cite[Section 7]{Irie loop}.
	
	\begin{lem}\label{lem:loop homology isom}
		For any closed oriented $C^\infty$-manifold $M$ and $k\in\mathbb{Z}_{\geq0}$, the zig-zag of smooth maps between differentiable spaces
		\begin{equation*}
			\mathscr{L}_{k+1,\mathrm{reg}}^M\xrightarrow{\mathrm{id}_{\mathscr{L}_{k+1}M}}\mathscr{L}_{k+1}^M\xhookleftarrow{T=1}\mathcal{L}_{k+1}^M\hookrightarrow\mathcal{L}^M\times\Delta^k
		\end{equation*}
		induces a zig-zag of isomorphisms between de Rham homology groups:
		\begin{equation*}
			H^{\mathrm{dR}}_*(\mathscr{L}_{k+1,\mathrm{reg}}^M)\xrightarrow{\cong} H^{\mathrm{dR}}_*(\mathscr{L}_{k+1}^M)\xleftarrow{\cong} H^{\mathrm{dR}}_*(\mathcal{L}_{k+1}^M)\xrightarrow{\cong} H^{\mathrm{dR}}_*(\mathcal{L}^M\times\Delta^k).
		\end{equation*}
	\end{lem}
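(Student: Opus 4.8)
The plan is to show separately that each of the three maps in the zig-zag induces an isomorphism on $H^{dR}_*$, the main tool being \emph{smooth homotopy invariance} of de Rham homology: a smooth homotopy $G\colon X\times[0,1]\to Y$ of differentiable spaces (with $[0,1]\subset\mathbb{R}$ carrying the structure of Example \ref{exmp:differentiable space}(i)(a)) induces $G(\cdot,0)_*=G(\cdot,1)_*$ on $H^{dR}_*$, via a prism/fiber-integration chain homotopy in the spirit of the constructions of \cite{Irie loop}. Granting this, any smooth map admitting a smooth homotopy inverse is a de Rham isomorphism, so the two set-theoretic inclusions reduce to producing explicit smooth deformation retractions, whereas the change-of-differentiable-structure map $\mathscr{L}_{k+1,\mathrm{reg}}^M\to\mathscr{L}_{k+1}^M$ needs a separate regularization argument.

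For $\mathcal{L}_{k+1}^M\hookrightarrow\mathscr{L}_{k+1}^M$ (the locus $T=1$) the homotopy inverse is the rescaling $(T,\gamma,t_1,\dots,t_k)\mapsto(1,\gamma(T\,\cdot\,),t_1/T,\dots,t_k/T)$, which sends a Moore loop of length $T$ to one of length $1$; it is smooth and preserves the higher-order vanishing at the marked points, the straight-line homotopy $s\mapsto(1-s)T+s$ on the length parameter shows it is a smooth homotopy inverse, so $T=1$ is a smooth homotopy equivalence. For $\mathcal{L}_{k+1}^M\hookrightarrow\mathcal{L}^M\times\Delta^k$ the idea is to \emph{flatten} a loop by precomposition with a time reparametrization: fixing a smooth nondecreasing surjection $\rho\colon[0,1]\to[0,1]$ all of whose derivatives vanish at $0$ and $1$, one builds $\phi_t\colon[0,1]\to[0,1]$ by affinely rescaling $\rho$ on each subinterval $[t_j,t_{j+1}]$ cut out by $t\in\Delta^k$ (with $t_0:=0$, $t_{k+1}:=1$), so that $\phi_t$ fixes and is infinitely flat at each of $0,t_1,\dots,t_k,1$, whence $(\gamma,t)\mapsto(\gamma\circ\phi_t,t)$ lands in $\mathcal{L}_{k+1}^M$. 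The straight-line homotopy $\psi_{s,t}:=(1-s)\,\mathrm{id}+s\,\phi_t$ connects $\phi_t$ to the identity, and because $\psi_{s,t}(t_j)=t_j$ for all $s$, the chain rule shows that $\gamma\circ\psi_{s,t}$ retains vanishing derivatives at the $t_j$ whenever $\gamma$ does; thus the homotopy stays inside $\mathcal{L}_{k+1}^M$ when started there and inside $\mathcal{L}^M\times\Delta^k$ in general, exhibiting the inclusion as a smooth homotopy equivalence.

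For the change of structure $\mathscr{L}_{k+1,\mathrm{reg}}^M\to\mathscr{L}_{k+1}^M$, the identity on underlying sets, the regular plots are precisely those whose endpoint evaluations are submersions, and the point is that they are cofinal among all plots: given a plot $\vphi\colon U\to\mathscr{L}_{k+1}M$ one thickens it to a plot on $U\times B$, where $B$ is a small ball, by moving the loop near each endpoint along geodesics of a fixed metric on $M$, turning the two endpoint evaluations into submersions. Together with the defining relations of $C^{dR}_*$ and fiber integration over $B$, this shows that the inclusion of the regular de Rham complex into the full one is a quasi-isomorphism, yielding the remaining isomorphism.

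I expect the main obstacle to be the \emph{joint} smoothness of the flattening family $\phi_t$, and hence of the homotopy $\psi_{s,t}$, as $t$ ranges over all of $\Delta^k$ --- in particular over the boundary and diagonal strata where marked points collide with one another or with the endpoints and some subinterval $[t_j,t_{j+1}]$ collapses to a point. Arranging the rescaled copies of $\rho$ to fit together smoothly across these collapses, so that the maps above are genuine smooth maps of differentiable spaces (plots pull back to plots), is the delicate part; the analogous transversality bookkeeping in the regularization step, where one uses that $M$ is an honest manifold, is the secondary technical point.
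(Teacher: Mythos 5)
First, be aware that the paper contains no proof of this lemma: it is imported wholesale from Irie's work, with the text stating only that it ``is proved in \cite[Section 7]{Irie loop}''. Your overall plan --- smooth homotopy invariance of $H^{dR}_*$, explicit reparametrization homotopies for the two inclusions, and a plot-thickening/fiber-integration argument for the change of differentiable structure --- is the same circle of ideas used there, so the strategy is sound. One inaccuracy in the third step: since $\mathscr{L}^M_{k+1,\mathrm{reg}}$ carries the structure induced from $(\Pi^M_{\mathrm{reg}})^{k+1}$, a regular plot must have submersive evaluation at all $k+1$ concatenation points (the basepoint and every marked point), not merely at ``the two endpoint evaluations''; the thickening has to achieve all of these simultaneously while preserving the matching and infinite-flatness conditions.

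The genuine gaps are in the two explicit homotopy inverses. (1) The rescaling $(T,\gamma,t_1,\dots,t_k)\mapsto(1,\gamma(T\,\cdot\,),t_1/T,\dots,t_k/T)$ is undefined at $T=0$, and length-zero Moore loops cannot be excised: they are honestly in $\mathscr{L}_{k+1}M$ and carry the de Rham cycles $(M,i_k,1)$ that serve as unit and multiplication in Example \ref{exmp:Irie operad}. Worse, $t_j/T$ admits no continuous extension to $T=0$ (compare the limits along $t_j=T$ and along $t_j=0$), so the map is not even continuous there. This is repairable --- first apply the smooth homotopy that appends a constant arc of length $s$ at the basepoint (legitimate by the infinite-order flatness there), pushing everything into $\{T\geq1\}$, and only then rescale --- but as written the step fails. (2) The flattening family obtained by affinely rescaling a fixed flat surjection $\rho$ onto each $[t_j,t_{j+1}]$ is not jointly smooth at the collision strata: on that subinterval $\partial_u^2\phi_t(u)=(t_{j+1}-t_j)^{-1}\rho''\bigl((u-t_j)/(t_{j+1}-t_j)\bigr)$, which blows up as $t_{j+1}-t_j\to0$, so for a loop $\gamma$ with $\gamma'(t_j)\neq0$ the family $\gamma\circ\phi_t$ is not even $C^2$ in $(u,t)$, and the map does not send plots to plots. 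You flag this as ``the delicate part'', but it is not a matter of fitting the rescaled copies of $\rho$ together: the affine-rescaling ansatz itself must be replaced (e.g.\ by a reparametrization obtained by integrating a density that is jointly smooth in $(u,t)$ and vanishes to infinite order along the loci $u=t_j$), and carrying this out is exactly the technical content of \cite[Section 7]{Irie loop}. So the proposal is a correct outline whose two key constructions, as stated, do not work.
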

	The cocyclic structure on $\{\mathcal{L}^M\times\Delta^k\}_{k}$ restricts to $\{\mathcal{L}_{k+1}^M\}_k$.
	There is also a similar structure of cocyclic set on $\{\mathscr{L}_{k+1}M\}_k$ as follows. Regarding $\mathscr{L}_{k+1}M\subset(\Pi M)^{k+1}$, $\delta_i:\mathscr{L}_kM\to \mathscr{L}_{k+1}M$, $\sigma_i:\mathscr{L}_{k+2}M\to \mathscr{L}_{k+1}M$, $\tau_k:\mathscr{L}_{k+1}M \to \mathscr{L}_{k+1}M$ are
	\begin{subequations}\begin{align}
			\delta_i(T,\gamma,t_1,\dots,t_{k-1})&:=
			\begin{cases}
				(T,\gamma,0,t_1,\dots,t_{k-1}) & (i=0)\\
				(T,\gamma,t_1,\dots,t_i,t_i,\dots,t_{k-1}) & (1\leq i\leq k-1)\\
				(T,\gamma,t_1,\dots,t_{k-1},T) & (i=k),
			\end{cases}\\
			\sigma_i(T,\gamma,t_1,\dots,t_{k+1})&:=
			(T,\gamma,t_1,\dots,\widehat{t_{i+1}},\dots,t_{k+1}) \hspace{.16667em}(0\leq i\leq k),\\\label{eqn:cyclic structure on loop}
			\tau_k(T,\gamma,t_1,\dots,t_k)&:=
			(T,\gamma^{t_1},t_2-t_1,\dots,t_k-t_1,T-t_1),
	\end{align}\end{subequations}
	where $\gamma^{t_1}(t):=\gamma(t+t_1)$. These cocyclic maps are smooth for both $\{\mathscr{L}_{k+1}^M\}_k$ and $\{\mathscr{L}_{k+1,\mathrm{reg}}^M\}_k$. Note that if we view $\mathscr{L}_{k+1}M\subset(\Pi M)^{k+1}$, then \eqref{eqn:cyclic structure on loop} can be written as $\tau_k((T_0,\gamma_0),\dots,(T_k,\gamma_k))=((T_1,\gamma_1),\dots,(T_k,\gamma_k),(T_0,\gamma_0))$.
	
	Let us write $(C^{\mathscr{L}}_*,b,B):=\big(\prod_{k\geq 0}C^{\mathrm{dR}}_{*+k}(\mathscr{L}_{k+1,\mathrm{reg}}^M),\partial+\delta,Ns(1-\lambda)\big)$ for the mixed total complex of the cocyclic chain complex $\{C^{\mathrm{dR}}_*(\mathscr{L}_{k+1,\mathrm{reg}}^M)\}_k$.
	
	\begin{prop}\label{prop:Irie loop s1 equiv}
		For any closed oriented $C^\infty$-manifold $M$, there are natural isomorphisms $\HC^{[v^{-1}]}_*(C^{\mathscr{L}})\cong H^{S^1}_*(\mathcal{L}M)$, $\HC^{[[v,v^{-1}]}_*(C^{\mathscr{L}})\cong \widehat{H}^{S^1}_*(\mathcal{L}M)$, and $\HC^{\lambda}_*(C^{\mathscr{L}})\cong \HC^{[[v]]}_*(C^{\mathscr{L}})\cong G^{S^1}_*(\mathcal{L}M)$, which are compatible with long exact sequences.
	\end{prop}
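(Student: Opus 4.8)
The plan is to build a zig-zag of mixed-complex quasi-isomorphisms connecting $C^{\mathscr{L}}$ to the mixed complex $C^{dR,\mathcal{L}^M\Delta}=\prod_{k\geq0}C^{dR}_{*+k}(\mathcal{L}^M\times\Delta^k)$ of Subsection \ref{subsection:s1 equiv homology differentiable space}, and then to invoke Proposition \ref{prop:differentiable topological s1 isom}. By Example \ref{exmp:cyclic s1 dR} that proposition applies to $X=\mathcal{L}^M$, giving $\mathrm{HC}^{[v^{-1}]}_*(C^{dR,\mathcal{L}^M\Delta})\cong H^{\s}_*(\mathcal{L}M)$ together with the periodic, negative and Connes' variants, all compatible with the long exact sequences. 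So it suffices to produce the zig-zag.

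First I would check that the four smooth maps of Lemma \ref{lem:loop homology isom}, read at all $k\geq0$ simultaneously, are compatible with the cocyclic structures. The restriction-to-$\{T=1\}$ and the inclusion maps visibly commute with $\delta_i,\sigma_i$, and they intertwine the cyclic operators: on $\mathscr{L}_{k+1}M$ the operator $\tau_k$ sends $(T,\gamma,t_1,\dots,t_k)$ to $(T,\gamma^{t_1},t_2-t_1,\dots,t_k-t_1,T-t_1)$, which at $T=1$ becomes precisely the formula defining $\tau_k$ on $\mathcal{L}^M\times\Delta^k$ once the reparametrization $\gamma\mapsto\gamma^{t_1}$ is identified with the $\s$-action on $\mathcal{L}M$ under $\mathcal{L}_{k+1}M=\{T=1\}$. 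Hence
$$\{\mathscr{L}_{k+1,\mathrm{reg}}^M\}_k\to\{\mathscr{L}_{k+1}^M\}_k\xleftarrow{T=1}\{\mathcal{L}_{k+1}^M\}_k\hookrightarrow\{\mathcal{L}^M\times\Delta^k\}_k$$
is a zig-zag of cocyclic differentiable spaces, and applying $C^{dR}_*(-)$ yields a zig-zag of cocyclic chain complexes. Passing to the associated mixed total complexes, the induced maps commute with $b=\partial+\delta$ (being chain maps compatible with the $\delta_i$) and with Connes' operator $B=Ns(1-\lambda)$ (being compatible with the $\sigma_i,\tau_k$), so they are morphisms of mixed complexes.

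Next I would upgrade the level-wise isomorphisms of Lemma \ref{lem:loop homology isom} to quasi-isomorphisms of the total complexes $(C_*,b)$. Each map in the zig-zag is, for every fixed cosimplicial degree $k$, a quasi-isomorphism on de Rham homology; equivalently its mapping cone is the total complex of a cosimplicial chain complex acyclic in each cosimplicial degree. The spectral sequence of the (complete, bounded-above) filtration by cosimplicial degree then has vanishing $E_1$-page, so the cone is acyclic by the convergence theorem \cite[Theorem 5.5.10(2)]{Weibel}, exactly as in the proof of Lemma \ref{lem:cocyclic singular quasi}. Thus each map is a quasi-isomorphism of mixed complexes, and Lemma \ref{lem:mixed complex quasi} gives isomorphisms on $\mathrm{HC}^{[v^{-1}]}_*$, $\mathrm{HC}^{[[v,v^{-1}]}_*$ and $\mathrm{HC}^{[[v]]}_*$. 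Composing the resulting isomorphisms with those of Proposition \ref{prop:differentiable topological s1 isom} for $\mathcal{L}^M$, and using the isomorphism $\mathrm{HC}^{\lambda}_*\cong\mathrm{HC}^{[[v]]}_*$ available for any cocyclic chain complex (Example \ref{exmp:cocyclic complex}), yields all the claimed identifications. Compatibility with the tautological and Connes-Gysin long exact sequences follows from Lemma \ref{lem:cyclic long exact seq naturality} and Lemma \ref{lem:Connes exact seq coincide}, since every intervening map is a morphism of mixed complexes.

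I expect the main obstacle to be the bookkeeping of the first step: verifying that the zig-zag of Lemma \ref{lem:loop homology isom} really is cocyclic, in particular that the Moore-loop rotation $\tau_k$ on $\mathscr{L}_{k+1,\mathrm{reg}}^M$ matches the product rotation on $\mathcal{L}^M\times\Delta^k$ after setting $T=1$ and identifying $\gamma\mapsto\gamma^{t_1}$ with the loop-rotation $\s$-action. Once this compatibility is secured, the remainder is a formal consequence of the machinery already in place (Lemmas \ref{lem:mixed complex quasi}, \ref{lem:cyclic long exact seq naturality} and \ref{lem:cocyclic singular quasi}, together with Proposition \ref{prop:differentiable topological s1 isom}).
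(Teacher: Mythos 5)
Your proposal is correct and follows essentially the same route as the paper: the zig-zag of cocyclic differentiable spaces from Lemma \ref{lem:loop homology isom} induces a zig-zag of mixed complex quasi-isomorphisms, and the conclusion follows from Lemma \ref{lem:mixed complex quasi}, Proposition \ref{prop:differentiable topological s1 isom} and Example \ref{exmp:cyclic s1 dR}. You merely make explicit two steps the paper leaves implicit — the check that the maps intertwine the cyclic operators, and the spectral-sequence upgrade from level-wise to total-complex quasi-isomorphisms — both of which are carried out correctly.
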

	\begin{proof}
		The smooth maps $\mathscr{L}_{k+1,\mathrm{reg}}^M\xrightarrow{\mathrm{id}}\mathscr{L}_{k+1}^M\xhookleftarrow{T=1}\mathcal{L}_{k+1}^M\hookrightarrow\mathcal{L}^M\times\Delta^k$ commute with cocyclic maps, inducing a zig-zag of mixed complex morphisms between the mixed total complexes associated to the cocyclic de Rham chain complexes of these cocyclic differentiable spaces. By Lemma \ref{lem:loop homology isom}, this is a zig-zag of mixed complex quasi-isomorphisms. The rest is obvious in view of Lemma \ref{lem:mixed complex quasi}, Proposition \ref{prop:differentiable topological s1 isom} and Example \ref{exmp:cyclic s1 de Rham}. 
	\end{proof}
	
	\section{Preliminaries on operads and algebraic structures}\label{section:operads and algebraic structures}
	Let $V=\{V_i\}_{i\in\mathbb{Z}}$ be a (homologically) graded vector space.
	
	A Lie bracket of degree $n\in\mathbb{Z}$ is a Lie bracket on $V[-n]$, namely a bilinear map $[,]:V\ot V\to V$ of degree $n$ satisfying shifted skew-symmetry and Jacobi identity: \begin{equation*}[a,b]=-(-1)^{(|a|-n)(|b|-n)}[b,a],\hspace{.33333em}\
		[a,[b,c]]=[[a,b],c]+(-1)^{(|a|-n)(|b|-n)}[b,[a,c]].
	\end{equation*}
	Note that in this definition, there is no need to apply sign change \eqref{eqn:sign suspension}.
	
	A structure of \emph{Gerstenhaber algebra} is a Lie bracket of degree 1 and a graded commutative (and associative, by default) product $\boldsymbol{\cdot}$ satisfying the Poisson relation:
	\begin{equation*}
		[a,b\boldsymbol{\cdot} c]=[a,b]\boldsymbol{\cdot} c+(-1)^{(|a|+1)|b|}b\boldsymbol{\cdot}[a,c].
	\end{equation*}
	
	A structure of \emph{Batalin-Vilkovisky (BV) algebra} is a graded commutative product $\boldsymbol{\cdot}$ and a linear map $\Delta:V_*\to V_{*+1}$ (called the BV operator) such that $\Delta^2=0$, and 
	\begin{align}\label{eqn:BV}
		\Delta(a\boldsymbol{\cdot} b\boldsymbol{\cdot} c)= \Delta(a\boldsymbol{\cdot} b)\boldsymbol{\cdot} c+(-1)^{|a|}a\boldsymbol{\cdot}\Delta(b\boldsymbol{\cdot} c)+(-1)^{(|a|+1)|b|}b\boldsymbol{\cdot}\Delta(a\boldsymbol{\cdot} c)\\\nonumber
		-\Delta a\boldsymbol{\cdot} b\boldsymbol{\cdot} c-(-1)^{|a|}a\boldsymbol{\cdot}\Delta b\boldsymbol{\cdot} c-(-1)^{|a|+|b|}a\boldsymbol{\cdot} b\boldsymbol{\cdot}\Delta c.
	\end{align}
	By induction, the defining relation \eqref{eqn:BV} implies that for any $k\geq2$,
	\begin{align}\label{eqn:BV generalized}
		\Delta(a_1\boldsymbol{\cdot}a_2\boldsymbol{\cdot}\cdots\boldsymbol{\cdot} a_k)=\sum_{1\leq i<j\leq k}(-1)^{\varepsilon(i,j)}\Delta(a_i\boldsymbol{\cdot}a_j)\boldsymbol{\cdot}a_1\boldsymbol{\cdot}\cdots\wh{a_i}\cdots\wh{a_j}\cdots \boldsymbol{\cdot}a_k\\\nonumber -(k-2)\sum_{1\leq i\leq k}(-1)^{|a_1|+\cdots+ |a_k|}a_1\boldsymbol{\cdot}\cdots\boldsymbol{\cdot}\Delta a_i\boldsymbol{\cdot}\cdots\boldsymbol{\cdot}a_k,
	\end{align}
	where $\varepsilon(i,j)$ is from the Koszul sign rule. By \cite[Proposition 1.2]{Getzler BV}, a BV algebra is equivalently a Gerstenhaber algebra with a linear map $\Delta:V_*\to V_{*+1}$ such that $\Delta^2=0$ and 
	\begin{equation}\label{eqn:BV induce Gerstenhaber}
		[a,b]=(-1)^{|a|}\Delta(a\boldsymbol{\cdot} b)-(-1)^{|a|}\Delta a\boldsymbol{\cdot} b-a\boldsymbol{\cdot}\Delta b.
	\end{equation} 
	
	Following Getzler \cite{Getzler gravity}, a structure of \emph{gravity algebra} is a sequence of graded symmetric linear maps $V^{\ot k}\to V$ ($k\geq2$) of degree 1, $a_1\ot\cdots\ot a_k\mapsto \{a_1,\dots,a_k\}$ (which we call $k$-th bracket), satisfying the following generalized Jacobi relations:
	\begin{align*}
		\sum_{1\leq i<j\leq k}(-1)^{\varepsilon(i,j)}\{\{a_i,a_j\},a_1,\dots,\widehat{a_i},\dots,\widehat{a_j},\dots,a_k,b_1,\dots,b_l\}\\=\begin{cases}\{\{a_1,\dots,a_k\},b_1,\dots,b_l\} & (l>0)\\\hspace{3em}0 & (l=0).\end{cases}
	\end{align*}
	Note that the relation for $(k,l)=(3,0)$ implies that, with sign change \eqref{eqn:sign suspension}, the second bracket becomes an honest Lie bracket on $V[-1]$.
	
	The following lemma, which goes back to \cite[Theorem 6.1]{Chas-Sullivan string topology}, is well-known to experts.
	\begin{lem}\label{lem:BV induce gravity}
		Let $(V_*,\boldsymbol{\cdot},\Delta)$ be a BV algebra, $W_*$ be a graded vector space, with linear maps $\alpha:W_*\to V_*$, $\beta:V_*\to W_{*+1}$ such that $\Delta=\alpha\circ\beta$ and $\beta\circ\alpha=0$.
		Then:
		\begin{enumerate}
			\item \label{item:lem:BV induce gravity 1} $W_*$ is a gravity algebra where the brackets $W^{\ot k}\to W$ are
			\begin{equation}\label{eqn:BV induces gravity}
				\hspace{.16667em}x_1\ot\cdots\ot x_k\mapsto \{x_1,\dots,x_k\}:=\beta(\alpha(x_1)\boldsymbol{\cdot}\dots\boldsymbol{\cdot}\alpha(x_k))\hspace{.33333em}(k\geq2).
			\end{equation}		
			\item \label{item:lem:BV induce gravity 2} Let $[,]$ be the Gerstenhaber bracket \eqref{eqn:BV induce Gerstenhaber} on $V_*$. Then for any $x_1,x_2\in W$,
			\begin{equation*}
				\alpha(\{x_1,x_2\})=(-1)^{|x_1|}[\alpha(x_1),\alpha(x_2)].
			\end{equation*}
		\end{enumerate}
	\end{lem}
	\begin{proof}
		To prove \eqref{item:lem:BV induce gravity 1}, first note that since $\boldsymbol{\cdot}$ is graded commutative, $\{x_1,\dots,x_k\}$ is graded symmetric in its variables. Next, the generalized Jacobi relations follow from a straightforward calculation based on \eqref{eqn:BV generalized} (see the proof of \cite[Theorem 8.5]{Chen string}), and is omitted. The proof of \eqref{item:lem:BV induce gravity 2} is trivial.
	\end{proof}
	A BV algebra homomorphism between two BV algebras is an algebra homomorphism that commutes with their BV operators. The case of gravity algebras is similar. The following lemma is obvious.
	\begin{lem}\label{lem:BV induce gravity morphism}
		Suppose there is a commutative diagram of linear maps
		\begin{center}
			\begin{tikzcd}
				W_*\arrow[r,"\alpha"]\arrow[d,"f"] & V_*\arrow[r,"\beta"]\arrow[d,"g"] & W_{*+1}\arrow[d,"f"]\\
				W_*^\prime\arrow[r,"\alpha^\prime"] & V_*^\prime\arrow[r,"\beta^\prime"] & W_{*+1}^\prime
			\end{tikzcd}
		\end{center}
		such that $(V_*,W_*,\alpha,\beta)$ and $(V_*^\prime,W_*^\prime,\alpha^\prime,\beta^\prime)$ satisfy the assumptions in Lemma \ref{lem:BV induce gravity}, and $g$ is a BV algebra homomorphism. Then $f$ is a gravity algebra homomorphism (for the induced structures on $W$,$W^\prime$).\qed
	\end{lem}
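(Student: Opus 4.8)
The plan is to verify directly that $f$ intertwines the gravity brackets, since the formulas for those brackets are completely explicit. By Lemma \ref{lem:BV induce gravity}(i), for each $k\geq2$ the $k$-th bracket on $W$ is $\{x_1,\dots,x_k\}=\beta(\alpha(x_1)\boldsymbol{\cdot}\cdots\boldsymbol{\cdot}\alpha(x_k))$, and the $k$-th bracket on $W^\prime$ is given by the same formula with $\alpha,\beta$ replaced by $\alpha^\prime,\beta^\prime$. A gravity algebra homomorphism is a linear map commuting with all brackets, so it suffices to show, for every $k\geq2$ and all $x_1,\dots,x_k\in W$, that $f(\{x_1,\dots,x_k\})=\{f(x_1),\dots,f(x_k)\}$.

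The verification is a one-line diagram chase. Reading off the commuting diagram, the right square gives $f\circ\beta=\beta^\prime\circ g$ and the left square gives $g\circ\alpha=\alpha^\prime\circ f$; and by hypothesis $g$ is in particular a homomorphism of graded algebras, so it distributes over the $k$-fold product. Combining these,
\begin{align*}
f(\{x_1,\dots,x_k\})
&=f\beta\big(\alpha(x_1)\boldsymbol{\cdot}\cdots\boldsymbol{\cdot}\alpha(x_k)\big)
=\beta^\prime g\big(\alpha(x_1)\boldsymbol{\cdot}\cdots\boldsymbol{\cdot}\alpha(x_k)\big)\\
&=\beta^\prime\big(g\alpha(x_1)\boldsymbol{\cdot}\cdots\boldsymbol{\cdot} g\alpha(x_k)\big)
=\beta^\prime\big(\alpha^\prime f(x_1)\boldsymbol{\cdot}\cdots\boldsymbol{\cdot}\alpha^\prime f(x_k)\big)
=\{f(x_1),\dots,f(x_k)\},
\end{align*}
which is exactly what we want.

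There is no real obstacle here; the only point to check is that no spurious signs appear when $g$ passes through the product, and this is guaranteed because a BV algebra homomorphism is by definition a \emph{strict} (sign-preserving) morphism of graded algebras. It is worth remarking that the hypothesis ``$g$ is a BV algebra homomorphism'' is used only through its being an algebra homomorphism compatible with $\alpha,\beta$: the compatibility of $g$ with the BV operators $\Delta,\Delta^\prime$ plays no role, since $\Delta$ and $\Delta^\prime$ do not appear in the bracket formulas of Lemma \ref{lem:BV induce gravity}. This is precisely why the lemma is purely formal.
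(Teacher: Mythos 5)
Your proof is correct and is exactly the diagram chase the paper has in mind when it declares this lemma ``obvious'' and omits the argument: the brackets of Lemma \ref{lem:BV induce gravity} are defined purely in terms of $\alpha$, $\beta$ and the product, so the two commuting squares plus multiplicativity of $g$ give the result immediately. Your closing remark that only the algebra-homomorphism part of the BV hypothesis is used is also accurate (and indeed $g\Delta=\Delta' g$ already follows from $\Delta=\alpha\beta$ and the squares).
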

	Next we need to work in the language of operads. We collect some basics below, and refer the reader to \cite[Section 2]{Irie loop} or standard references \cite{Loday operad book}\cite{MSS operad book} for more details.
	
	Let $(\mathscr{C},\ot,1_{\mathscr{C}})$ be a symmetric monoidal category. 
	A \emph{nonsymmetric operad} (ns operad for short) $\mathcal{O}$ in $\mathscr{C}$ consists of the following data:
	\begin{itemize}
		\item An object $\mathcal{O}(k)$ in $\mathscr{C}$ for each $k\in\mathbb{Z}_{\geq0}$.
		\item Morphisms $\circ_i:\mathcal{O}(k)\ot\mathcal{O}(l)\to\mathcal{O}(k+l-1)$ for each $1\leq i\leq k$ and $l\geq0$,
		called \emph{partial compositions}, that are associative: for $x\in\mathcal{O}(k)$, $y\in\mathcal{O}(l)$, $z\in\mathcal{O}(m)$,
		\begin{subequations}
			\begin{align}\label{eqn:operad associativity 1}
				(x\circ_i y)\circ_{i+j-1} z&=x\circ_i(y\circ_jz)\hspace{.66667em}(1\leq i\leq k,\hspace{.16667em}1\leq j\leq l,\hspace{.16667em}m\geq0),\\\label{eqn:operad associativity 2}
				(x\circ_iy)\circ_{l+j-1}z&=(x\circ_j z)\circ_i y\hspace{.66667em}(1\leq i<j\leq k,\hspace{.16667em}l\geq0,\hspace{.16667em}m\geq0).
			\end{align}
		\end{subequations}
		\item A morphism $1_{\mathcal{O}}:1_{\mathscr{C}}\to\mathcal{O}(1)$, which a two-sided unit for $\circ_i$.
	\end{itemize}
	An \emph{operad} is a ns operad such that each $\mathcal{O}(k)$ admits a right action of the symmetric group $\mathfrak{S}_k$ ($\mathfrak{S}_0$ is the trivial group), in a way compatible with partial compositions.
	
	A (ns) operad in the symmetric monoidal category of dg (resp. graded) vector spaces is called a (ns) \emph{dg (resp. graded) operad}. A Koszul sign $(-1)^{|y||z|}$ should appear in \eqref{eqn:operad associativity 2} in graded and dg cases. 
	Taking homology yields a functor from the category of (ns) dg operads to the category of (ns) graded operads.
	
	\begin{exmp}\label{exmp:operads} Here are some examples of dg operads and graded operads. 
		\begin{enumerate}
			\item (Endomorphism operad $\mathcal{E}nd_V$.) For any dg (resp. graded) vector space $V_*$, there is a dg (resp. graded) operad $\mathcal{E}nd_V$ defined as follows. For each $k\geq0$, $\mathcal{E}nd_V(k)_*:=\Hom_*(V^{\ot k},V)$, where $\Hom_*(V^{\ot 0},V)=\Hom_*(\mathbb{R},V)= V_*$. For $1\leq i\leq k$, $l\geq0$, $f\in\Hom_*(V^{\ot k},V)$, $g\in\Hom_*(V^{\ot l},V)$, and $\sigma\in \mathfrak{S}_k$,
			\begin{align*}
				(f\circ_i g)(v_1\ot\cdots \ot v_{k+l-1})&:=(-1)^{\varepsilon}f(v_1\ot\cdots\ot g(v_i\ot\cdots)\ot\cdots),\\
				(f\cdot\sigma)(v_1\ot\cdots\ot v_k)&:=(-1)^{\varepsilon}f(v_{\sigma^{-1}(1)}\ot\cdots\ot v_{\sigma^{-1}(n)}),\\
				1_{\mathcal{E}nd_V}&:=\mathrm{id}_V\in\Hom_0(V,V).
			\end{align*}
			Let $\mathcal{O}$ be a (ns) graded operad or dg operad. A structure of \emph{algebra over $\mathcal{O}$} on $V$, or say an action of $\mathcal{O}$ on $V$, means a morphism $\mathcal{O}\to\mathcal{E}nd_V$ as (ns) operads.
			\item (Gerstenhaber operad $\mathcal{G}er$, BV operad $\mathcal{BV}$, and gravity operad $\mathcal{G}rav$.) These are graded operads that can be defined in terms of generators subject to the relations defining Gerstenhaber/ BV/ gravity algebras. A Gerstenhaber/ BV/ gravity algebra is exactly an algebra over $\mathcal{G}er$/ $\mathcal{BV}$/ $\mathcal{G}rav$.
			
			\item \label{item:example operad 3}(Ward's construction \cite{Ward}.) There is a dg operad $\mathsf{M}_{\circlearrowleft}$ constructed from certain ``labeled $A_\infty$ trees'', such that $H_*(\mathsf{M}_{\circlearrowleft})\cong \mathcal{G}rav$ as graded operads, and there are explicit homotopies measuring the failure of gravity relations on $\mathsf{M}_{\circlearrowleft}$ (while Jacobi relation for the second bracket strictly holds). 
			For this reason an algebra over $\mathsf{M}_{\circlearrowleft}$ can be viewed as a gravity algebra up to homotopy. $\mathsf{M}_{\circlearrowleft}$ is closely related to the operad of ``cyclic brace operations'' (Section \ref{section:cyclic brace operations}). There are other important properties of $\mathsf{M}_{\circlearrowleft}$ that we will use later (Proposition \ref{prop:cyclic operad induce structure}\eqref{item:statement 5}). Indeed the notation of Ward \cite{Ward} is $\mathsf{M}_{\circlearrowright}$, but we use $\mathsf{M}_{\circlearrowleft}$ for the reason of Remark \ref{rem:cyclic orientation}.
		\end{enumerate}
	\end{exmp}

	\begin{defn}\label{defn:cyclic operad}(\cite[Definition 2.6, 2.9]{Irie loop}.) Let $\mathcal{O}$ be a ns dg operad.
		\begin{enumerate}
			\item A \emph{cyclic structure} $(\tau_k)_{k\geq0}$ on $\mathcal{O}$ is a sequence of morphisms $\tau_k:\mathcal{O}(k)\to\mathcal{O}(k)$ $(k\geq0)$ such that $\tau_k^{k+1}=\mathrm{id}_{\mathcal{O}(k)}$, $\tau_1(1_{\mathcal{O}})=1_{\mathcal{O}}$, and that for any $1\leq i\leq k$, $l\geq0$, $x\in\mathcal{O}(k)$, $y\in\mathcal{O}(l)$,
			\begin{equation*}
				\tau_{k+l-1}(x\circ_i y)=\begin{cases}
					\tau_k x\circ_{i-1} y & (i\geq2)\\
					(-1)^{|x||y|}\tau_l y\circ_l \tau_k x & (i=1,\hspace{.16667em}l\geq1)\\
					\tau_k^2x\circ_k y & (i=1,\hspace{.16667em}l=0).
				\end{cases}
			\end{equation*}
			\item A \emph{multiplication} $\mu$ and a \emph{unit} $\varepsilon$ in $\mathcal{O}$ are elements $\mu\in\mathcal{O}(2)_0$,  $\varepsilon\in\mathcal{O}(0)_0$ satisfying $\partial\mu=0$, $\mu\circ_1\mu=\mu\circ_2\mu$, $\partial\varepsilon=0$ and $\mu\circ_1\varepsilon=\mu\circ_2\varepsilon=1_{\mathcal{O}}$.
		\end{enumerate}
	\end{defn}
	
	\begin{rem}\label{rem:cyclic orientation}
		An operad with a cyclic structure is called a \emph{cyclic operad}. The cyclic relation in Definition \ref{defn:cyclic operad} differs from some authors (in particular, Ward \cite{Ward}) in the orientation of performing cyclic permutation, but they are equivalent. See e.g. \cite[Section 3]{Menichi cyclic}.
	\end{rem}
	
	Let $\mathcal{O}=(\mathcal{O}(k))_{k\geq0}$ be a ns dg operad endowed with a multiplication $\mu$ and a unit $\varepsilon$. Then $\{(\mathcal{O}(k)_*,\partial)\}_{k\ge0}$ is a cosimplicial chain complex where $\delta_i:\mathcal{O}(k-1)_*\to\mathcal{O}(k)_*$, $\sigma_i:\mathcal{O}(k+1)_*\to\mathcal{O}(k)_*$ ($0\leq i\leq k$) are
	\begin{equation}\label{eqn:operad cosimplicial}
		\delta_i(x):=\begin{cases}
			\mu\circ_2 x & (i=0)\\
			x\circ_i\mu & (1\leq i\leq k-1)\\
			\mu\circ_1 x & (i=k),
		\end{cases}\quad \sigma_i(x):=x\circ_{i+1}\varepsilon.
	\end{equation}
	Denote the associated total complex by $(\tilde{\mathcal{O}}_*,b)$. If there is also a cyclic structure $(\tau_k)_{k\geq0}$ on $\mathcal{O}$ such that $\mu$ is cyclically invariant, i.e. $\tau_2(\mu)=\mu$, then $\{(\mathcal{O}(k)_*,\partial),\delta_i,\sigma_i,\tau_k\}_{k\geq0}$ is a cocyclic chain complex.
	Denote the associated mixed complex by $(\tilde{\mathcal{O}}_*,b,B)$.
	
	\begin{prop}\label{prop:cyclic operad induce structure}
		Let $\mathcal{O}=(\mathcal{O}(k)_*,\partial)_{k\ge0}$ be a ns dg operad. Define binary operations $\circ$ and $[,]$ on $\tilde{\mathcal{O}}_*:=\prod_{k\geq0}\mathcal{O}(k)_{*+k}$ by the following: for $x=(x_k)_{k\geq0}$, $y=(y_k)_{k\geq0}$,
		\begin{subequations}
			\begin{align}\label{eqn:cyclic operad total complex pre Lie}
				(x\circ y)_k&:=\sum_{\substack{l+m=k+1\\ 1\leq i\leq l}}(-1)^{(i-1)(m-1)+(l-1)(|y|+m)}x_l\circ_i y_m,\\\label{eqn:cyclic operad total complex bracket}
				[x,y]&:=x\circ y-(-1)^{(|x|-1)(|y|-1)}y\circ x.
			\end{align}
		\end{subequations}	
		Then for $(\tilde{\mathcal{O}}_*,\partial)$, statement \eqref{item:statement 1a} below holds true.
		
		If there is a multiplication $\mu$ and a unit $\varepsilon$ on $\mathcal{O}$, define a binary operation
		$\boldsymbol{\cdot}$ on $\tilde{\mathcal{O}}_*$ by
		\begin{equation}
			\label{eqn:cyclic operad total complex prod}
			(x\boldsymbol{\cdot}y)_k:=\sum_{l+m=k}(-1)^{l|y|}(\mu\circ_1 x_l)\circ_{l+1}y_m.
		\end{equation}
		Then for $(\tilde{\mathcal{O}}_*,b)$, statements \eqref{item:statement 1b}\eqref{item:statement 2a} below hold true.
		
		If there is a cyclic structure $(\tau_k)_{k\ge0}$ on $\mathcal{O}$, then for $\tilde{\mathcal{O}}^{\mathrm{cyc}}_*=\mathrm{Ker}(1-\lambda)\subset\tilde{\mathcal{O}}_*$, statement \eqref{item:statement 3} below holds true.
		
		If there is a multiplication $\mu$, a unit $\varepsilon$ and a cyclic structure $(\tau_k)_{k\ge0}$ on $\mathcal{O}$ such that $\tau_2(\mu)=\mu$, then for $(\tilde{\mathcal{O}}_*,b,B)$ and $\tilde{\mathcal{O}}^{\mathrm{cyc}}_*$, the other statements below hold true.
		\begin{enumerate}
			\item \label{item:statement 1}\begin{enumerate}
				\item \label{item:statement 1a}$(\tilde{\mathcal{O}}_*,\partial,\circ)$ is a dg pre-Lie algebra (with shifted grading) such that $[,]$ is a Lie bracket of degree 1.
				\item \label{item:statement 1b}$(\tilde{\mathcal{O}}_*,b,\circ)$ is a dg pre-Lie algebra (with shifted grading) such that $[,]$ is a Lie bracket of degree 1, and $(\tilde{\mathcal{O}}_*,b,\boldsymbol{\cdot})$ is a dg algebra. 
			\end{enumerate}
			\item\label{item:statement 2} \begin{enumerate}
				\item\label{item:statement 2a}  $\boldsymbol{\cdot}$ and $[,]$ induce a Gerstenhaber algebra structure on $H_*(\tilde{\mathcal{O}},b)$. 
				\item\label{item:statement 2b}  $\boldsymbol{\cdot}$ and Connes' operator $B$ induce a BV algebra structure on $H_*(\tilde{\mathcal{O}},b)$ where the BV operator is $\Delta=B_*$. 
				\item\label{item:statement 2c} The above two structures on $H_*(\tilde{\mathcal{O}},b)$ are related by \eqref{eqn:BV induce Gerstenhaber}.
			\end{enumerate}
			\item\label{item:statement 3}  $\tilde{\mathcal{O}}^{\mathrm{cyc}}_*$ is closed under the operation $[,]$. The restriction of $[,]$ to $\tilde{\mathcal{O}}^{\mathrm{cyc}}_*$ is called the cyclic bracket.
			\item\label{item:statement 4} \begin{enumerate}
				\item\label{item:statement 4a} The BV algebra structure on $H_*(\tilde{\mathcal{O}},b)$ obtained in \eqref{item:statement 2b} naturally induces gravity algebra structures on $\HC^\lambda_*(\tilde{\mathcal{O}})$, $\HC^{[[v]]}_*(\tilde{\mathcal{O}})$ and $\HC^{[v^{-1}]}_*(\tilde{\mathcal{O}})[-1]$.
				\item\label{item:statement 4b} The map $B_{0*}:\HC^{[v^{-1}]}_*(\tilde{\mathcal{O}})[-1]\to \HC^{[[v]]}_*(\tilde{\mathcal{O}})$ in \eqref{eqn:cyclic exact seq tautological} is a gravity algebra homomorphism. The map $I_{\lambda*}:\HC^\lambda_*(\tilde{\mathcal{O}})\cong \HC^{[[v]]}_*(\tilde{\mathcal{O}})$ in \eqref{eqn:cyclic Connes isom} is a gravity algebra isomorphism.
				\item\label{item:statement 4c} The Lie bracket on $\HC^\lambda_{*}(\tilde{\mathcal{O}})[-1]$ induced from \eqref{item:statement 3} coincides with the second bracket of its gravity algebra structure, up to sign change \eqref{eqn:sign suspension}.
			\end{enumerate}
			\item\label{item:statement 5} $\tilde{\mathcal{O}}^{\mathrm{cyc}}_*$ admits an action of the operad $\mathsf{M}_{\circlearrowleft}$ (see Example \ref{exmp:operads}) which covers the cyclic bracket in \eqref{item:statement 3}. Via the isomorphism $H_*(\mathsf{M}_{\circlearrowleft})\cong \mathcal{G}rav$, this induces a gravity algebra structure on $\HC^\lambda_*(\tilde{\mathcal{O}})$ which is the same as that in \eqref{item:statement 4a}.
		\end{enumerate}
	\end{prop}
	\begin{proof}Statements \eqref{item:statement 1}\eqref{item:statement 2a} are exactly \cite[Theorem 2.8 (i)-(iii)]{Irie loop}, which in turn follows from \cite[Lemma 2.32]{Ward}. Statements \eqref{item:statement 2b}\eqref{item:statement 2c} follow from \cite[Theorem 2.10]{Irie loop}, which in turn is a consequence of \cite[Theorem B]{Ward}. Note that \cite[Theorem 2.10]{Irie loop} uses the normalized subcomplex $\tilde{\mathcal{O}}_*^{\mathrm{nm}}$, but there is no difference on homology: as explained in \cite[Section 2.5.4]{Irie loop}, the BV operator is just induced by Connes' operator $B=Ns$ on $\tilde{\mathcal{O}}^{\mathrm{nm}}$. 
		
		Statement \eqref{item:statement 3} is a straightforward consequence of \cite[Corollary 3.3]{Ward}. Alternatively, it is quite handy to use definition of cyclic structures to verify that if $\tau_{k_i}x_i=(-1)^{k_i}x_i$ ($x_i\in\mathcal{O}(k_i)$, $i=1,2$), then $\tau_{k_1+k_2-1}[x_1,x_2]=(-1)^{k_1+k_2-1}[x_1,x_2]$.
		
		Statement \eqref{item:statement 4a} is an application of Lemma \ref{lem:BV induce gravity}\eqref{item:lem:BV induce gravity 1} to a part of the exact sequences \eqref{eqn:cyclic exact seq gysin}\eqref{eqn:cyclic exact seq Connes}. Note that there is a transition between (co)homological gradings.
		\begin{itemize}
			\item For $\HC^{[[v]]}_*$, take $V_*=H_*(\tilde{\mathcal{O}},b)$, $W_*=\HC^{[[v]]}_*(\tilde{\mathcal{O}})$, $\alpha=p_{0*}$, and $\beta=B_*$. 
			\item For $\HC^{[v^{-1}]}_{*-1}$, take $V_*=H_*(\tilde{\mathcal{O}},b)$, $W_*=\HC^{[v^{-1}]}_*(\tilde{\mathcal{O}})[-1]=\HC^{[v^{-1}]}_{*-1}(\tilde{\mathcal{O}})$, $\alpha=B_{0*}$, and $\beta=i_*$.
			\item For $\HC^{\lambda}_*$, take $V_*=H_*(\tilde{\mathcal{O}},b)$, $W_*=\HC^{\lambda}_*(\tilde{\mathcal{O}})$, $\alpha=i_{\lambda*}$, and $\beta=B_\lambda$. Here the condition $\alpha\circ\beta=\Delta$ is satisfied because of Lemma \ref{lem:Connes exact seq coincide}.
		\end{itemize}
		Statement \eqref{item:statement 4b} follows from Lemma \ref{lem:BV induce gravity morphism}, Lemma \ref{lem:cyclic exact seq gysin morphism} and Lemma \ref{lem:Connes exact seq coincide}. Statement \eqref{item:statement 4c} follows from Lemma \ref{lem:BV induce gravity}\eqref{item:lem:BV induce gravity 2} and Statement \eqref{item:statement 2c}.
		
		Statement \eqref{item:statement 5} is a consequence of \cite[Theorem C]{Ward}, where the Maurer-Cartan element $\zeta=(\zeta_k)_{k\geq2}$ is taken as $\zeta_2=-\mu$ and $\zeta_k=0$ $(k\neq2)$. 
		
		To see statement \eqref{item:statement 5} covers statement \eqref{item:statement 3}, we need concrete description of the action of $\mathsf{M}_{\circlearrowleft}$ on $\tilde{\mathcal{O}}^{\mathrm{cyc}}_*$. For arity 2 it is the same as cyclic brace operations (see Example \ref{exmp:cyclic brace algebra}).
	\end{proof}
	\begin{rem}
		The sign in \eqref{eqn:cyclic operad total complex pre Lie} comes from operadic suspension (see Appendix \ref{subsection:operadic suspension}). Indeed, $\tilde{\mathcal{O}}=(\prod_{n\geq0}\mathfrak{s}\mathcal{O}(n))[-1]$.
	\end{rem}
	\begin{rem}
		Indeed, \cite[Theorem 2.8 \& 2.10]{Irie loop} and \cite[Theorem A \& B]{Ward} contain much stronger statements than Proposition \ref{prop:cyclic operad induce structure}\eqref{item:statement 1}\eqref{item:statement 2} which we do not need (e.g. existence of an action of a chain model of the (framed) little 2-disks operad on $\tilde{\mathcal{O}}$ or $\tilde{\mathcal{O}}^{\mathrm{nm}}$). Proposition \ref{prop:cyclic operad induce structure}\eqref{item:statement 1}\eqref{item:statement 2} themselves were known much earlier, e.g. see \cite[Section 1.2 \& Theorem 1.3]{Menichi cyclic}.
	\end{rem}
	
	\begin{exmp}\label{exmp:end operad} Let $(A^*,d,\boldsymbol{\cdot})$ be a dg algebra with unit $1_A$. Then $\mathcal{E}nd_A$ admits a multiplication and a unit given by $\mu(a_1\ot a_2):=a_1\boldsymbol{\cdot}a_2$, $\varepsilon:=1_A$. Viewing $A$ as a dg $A$-bimodule, the cosimplicial maps $\delta_i,\sigma_i$ in Example \ref{exmp:cyclic cohomology of algebra} are the same as \eqref{eqn:operad cosimplicial} for $(\mathcal{E}nd_A,\mu,\varepsilon)$ in \eqref{item:statement 1}. To discuss cyclic structures, suppose  there is a graded symmetric bilinear form $\langle,\rangle:A\times A\to \mathbb{R}$ of degree $m\in\mathbb{Z}$, such that
		\begin{equation*}
			d\langle a,b\rangle=\langle da,b\rangle+(-1)^{|a|}\langle a,db\rangle,\hspace{.66667em}\langle ab,c\rangle=\langle a,bc\rangle\hspace{.5em}(\forall a,b,c\in A).
		\end{equation*}	
		Namely $A$ is a dg version of a Frobenius algebra, but we do not require $\dim_{\mathbb{R}}A$ to be finite or $\langle,\rangle$ to be nondegenerate. Note that since $\langle,\rangle$ is symmetric, the relation $\langle ab,c\rangle=\langle a,bc\rangle$ is equivalent to $\langle,\rangle$ being \emph{cyclic}, i.e. 
		\begin{equation}\label{eqn:<> cyclic invariant}
			\langle ab,c\rangle=(-1)^{|a|(|b|+|c|)}\langle bc,a\rangle.
		\end{equation} Now consider $A^\vee[m]$ with a dg $A$-bimodule structure characterized by \eqref{eqn:A dual bimodule} (the degree of $\vphi\in A^\vee[m]$ is now shifted). The degree 0 map 
		\begin{equation}\label{eqn:bimodule map A to dual}
			\theta:A\to A^\vee[m];\hspace{.5em}\theta(a)(b):=\langle a,b\rangle\hspace{.16667em}(\forall a,b\in A)
		\end{equation}
		is a dg $A$-bimodule map, and $\Hom(-,\theta):\Hom^*(A^{\ot k},A)\to\Hom^*(A^{\ot k},A^\vee[m])$ is a morphism of cosimplicial complexes. $\{\Hom^*(A^{\ot k},A^\vee[m])=\Hom^{*+m}(A^{\ot k+1},\mathbb{R})\}_{k\geq0}$ is moreover cocyclic with cyclic permutations $(\tau_k)_{k\geq0}$ given in Example \ref{exmp:cyclic cohomology of algebra}.
		
		If $\theta$ happens to be an isomorphism, then $\{\Hom(-,\theta)^{-1}\circ\tau_k\circ\Hom(-,\theta)\}_{k\geq0}$ endows $(\mathcal{E}nd_A,\mu,\varepsilon)$ with a cyclic structure.  All statements of Proposition \ref{prop:cyclic operad induce structure} hold for $\mathcal{O}=\mathcal{E}nd_A$. 
		
		If $\theta$ is a quasi-isomorphism, then  $\widetilde{\mathcal{E}nd_A}=\CH^*(A,A)$ and $\CH^*(A,A^\vee[m])$ are quasi-isomorphic through a natural map induced by $\theta$. In this case, let us examine the statements \eqref{item:statement 1}-\eqref{item:statement 5} in Proposition \ref{prop:cyclic operad induce structure} for $\mathcal{O}=\mathcal{E}nd_A$. 
		\begin{enumerate}[label=(\arabic*)$^\prime$]
			\item \label{item:statement A} Statement \eqref{item:statement 1} still holds honestly (it is irrelevant to $\theta$). 
			\item \label{item:statement B} Statement \eqref{item:statement 2a} holds honestly (it is irrelevant to $\theta$). Statements \eqref{item:statement 2b}\eqref{item:statement 2c} ``hold weakly'' in the following sense:
			Connes' operator $B$ on $\CH^*(A,A^\vee[m])$ induces a BV operator on $\mathrm{HH}^*(A,A)\cong\mathrm{HH}^*(A,A^\vee[m])$, making $\mathrm{HH}^*(A,A)$ into a BV algebra, which is compatible with its Gerstenhaber algebra structure. This is proved by Menichi \cite[Theorem 18]{Menichi HH BV}. 
			\item \label{item:statement C} Statement \eqref{item:statement 3} ``holds weakly'' in the sense that the subspace of \emph{weakly cyclic invariants} in $\CH^*(A,A)$, $\Theta^{-1}(\CH_{\mathrm{cyc}}^*(A,A^\vee[m]))$, is closed under the bracket \eqref{eqn:cyclic operad total complex bracket}, and hence is a dg Lie subalgebra. Here $\Theta:\CH^*(A,A)\to\CH^*(A,A^\vee[m])$ is the cochain map induced by \eqref{eqn:bimodule map A to dual}, and $\CH_{\mathrm{cyc}}^*(A,A^\vee[m]):=\mathrm{Ker}(1-\lambda)$ is the subcomplex of cyclic invariants in $\CH^*(A,A^\vee[m])$, with respect to the cocyclic structure on $\{\Hom^{*+m}(A^{\ot k+1},\mathbb{R})\}_{k\geq0}$. This result is rather simple and should be well-known, e.g. it is stated without proof in \cite[Lemma 4]{Penkava}.
			\item \label{item:statement D} Statement \eqref{item:statement 4} ``holds weakly'' in the following sense: there are gravity algebra structures on $\HC_\lambda^*(A,A^\vee[m])\cong \HC_{[[u]]}^*(A,A^\vee[m])\cong \HC_{[[u]]}^*(A,A)$ and on $\HC_{[u^{-1}]}^*(A,A^\vee[m])\cong \HC_{[u^{-1}]}^*(A,A)$, induced by the BV algebra structure on $\mathrm{HH}^*(A,A)\cong\mathrm{HH}^*(A,A^\vee[m])$ described in \ref{item:statement C}.			
			\item \label{item:statement E}Statement \eqref{item:statement 5} ``holds weakly'' by Corollary \ref{cor:homotopy gravity action}, which largely generalizes \ref{item:statement C}. 
		\end{enumerate}  
	\end{exmp}
	\begin{rem}
		Statements \ref{item:statement C}\ref{item:statement E} above hold true even if $\theta:A\to A^\vee[m]$ is not a quasi-isomorphism. If $\theta$ is a quasi-isomorphism, then so is $\Theta:\CH(A,A)\to \CH(A,A^\vee[m])$. If $\Theta$ also restricts to a quasi-isomorphism $\Theta^{-1}(\CH_{\mathrm{cyc}}(A,A^\vee[m]))\to\CH_{\mathrm{cyc}}(A,A^\vee[m])$, then the structures in \ref{item:statement C}\ref{item:statement E} are compatible with those in \ref{item:statement B}\ref{item:statement D}.
	\end{rem}
	\begin{rem}\label{rem:end operad vs cyclic}
		Statement \ref{item:statement C} in Example \ref{exmp:end operad} is irrelevant to the algebra structure on $A$. It holds true when $A$ is just a graded vector space endowed with a symmetric bilinear form $\langle,\rangle:A\times A\to\mathbb{R}$ of degree $m$. In this case, we shall write $\Theta^{-1}(\prod_{k\geq 0}\Hom^{*+m}_{\mathrm{cyc}}(A^{k+1},\mathbb{R}))$ in place of $\Theta^{-1}(\CH_{\mathrm{cyc}}^*(A,A^\vee[m]))$. 
	\end{rem}
	
	\section{Cyclic brace operations}\label{section:cyclic brace operations}
	This section is devoted to the proof of Theorem \ref{thm:main result 3}.  Recall  $\tilde{\mathcal{O}}:=\prod_{k\geq0}\mathcal{O}(k)$ if $\mathcal{O}$ is dg operad, and $\tilde{\mathcal{O}}^{\mathrm{cyc}}:=\mathrm{Ker}(1-\lambda)\subset\tilde{\mathcal{O}}$ if $\mathcal{O}$ is a dg cyclic operad .
	\begin{defn}[Brace operations via concrete formulas]\label{defn:brace operation formula}
		Let $\mathcal{O}$ be a dg operad. For each $n\in\mathbb{Z}_{\geq0}$, define an $(n+1)$-ary operation on $\tilde{\mathcal{O}}$ as follows. When $n=0$, for $a\in\mathcal{O}(r)$, let $a\{\}:=a$. When $n>0$, for $a\in \mathcal{O}(r)$ and $b_j\in\mathcal{O}(t_j)$ ($1\leq j\leq n$), let
		\begin{equation}\label{eqn:brace}
			a\{b_1,\dots,b_n\}:=\sum_{i_1,\dots,i_n}\pm(\cdots((a\circ_{i_1}b_1)\circ_{i_2}b_2)\cdots\circ_{i_n}b_n),
		\end{equation}
		where the summation is taken over tuples $(i_1,\dots,i_n)\in\mathbb{Z}_{\geq1}^n$ satisfying $i_{j+1}\geq i_j+t_j$ and $i_n\leq r-n+1+\sum_{l=1}^{n-1}t_l$. The sign $\pm$ is from iteration of \eqref{eqn:operadic suspension}. 
	\end{defn}
	
	Brace operations were first described by Getzler \cite{Getzler brace} in Hochschild context (generalizing the Gerstenhaber bracket \cite{Gerstenhaber} which corresponds to $n=2$) and later by Gerstenhaber-Voronov \cite{GV} in operadic context. There is also an interpretation of brace operations via planar rooted trees, going back to the ``minimal operad'' of Kontsevich-Soibelman \cite{KS} (see also \cite[Section 7-9]{DW}), which allows for a variation in the cyclic invariant setting (\cite{Ward}). 
	
	Let us fix terminologies about trees before moving to more details. 
	\begin{itemize}
		\item A \emph{tree without tails} is a contractible 1-dimensional finite CW complex. A 0-cell is called a \emph{vertex}; the closure of a 1-cell is called an \emph{edge} (identified with $[0,1]$). 
		\item A \emph{tree with tails} is a
		tree without tails attached with copies of $[0,1)$ called \emph{tails} by gluing each $0\in[0,1)$ to some vertex.
	\end{itemize}
	
	The set of vertices, edges and tails in a tree $T$ is denoted by $V_T$, $E_T$ and $L_T$, respectively. The set of edges and tails at $v\in V_T$ is denoted by $E_v$ and $L_v$, respectively. The \emph{valence} of a vertex $v$ is the number $|E_v\cup L_v|$. The \emph{arity} of a vertex is its valence $-1$.
	\begin{itemize}
		\item An \emph{oriented tree} is a tree with a choice of direction for each edge, from one vertex to the other. Such a choice of directions is called an orientation of the tree.
	\end{itemize}
	\begin{itemize}
		\item A \emph{rooted tree} is a tree with a choice of a distinguished tail called the \emph{root}. 
	\end{itemize}
	
	Every rooted tree is naturally oriented by directions towards the root.
	\begin{itemize}
		\item A \emph{planar tree} is a tree with a cyclic order on $E_v\cup L_v$ for each vertex $v$.
	\end{itemize}
	
	Every planar tree can be embedded into the plane in a way unique up to isotopy, so that at each vertex $v$, the cyclic order on $E_v\cup L_v$ is counterclockwise.
	
	Every planar rooted tree $T$ carries a natural \emph{total order} on $E_T\cup L_T$, which can be obtained by moving counterclockwise along the boundary of a small tubular neighborhood of $T$ in the plane. It starts from the root and is compatible with the cyclic order on $E_v\cup L_v$ for each $v\in V_T$, and also restricts to total orders on $E_T$, $L_T$ and $E_v$, $L_v$ for each $v\in V_T$.
	\begin{itemize}
		\item An $n$-\emph{labeled tree} is a tree $T$ with a bijection between $\{1,2,\dots,n\}$ and $V_T$. If the number of vertices is not specified, it is just called a labeled tree.
	\end{itemize}
	
	The vertex with label $i$ in an $n$-labeled tree $T$ is denoted by $v_i(T)$, with arity $a_i(T)$.
	
	The notion of isomorphisms of trees (with various structures) is obvious. We shall view isomorphic trees as the same.
	
	For $n\in\mathbb{Z}_{\geq1}$, let $\mathsf{B}^s(n)$ be the set of $n$-labeled planar rooted trees without non-root tails, and let $\mathsf{B}(n)$ be the vector space spanned by $\mathsf{B}^s(n)$. Let $\bar{\mathsf{B}}^s(n)$ be the set of $n$-labeled planar rooted trees with tails, and let $\bar{\mathsf{B}}(n)$ be the vector space spanned by $\bar{\mathsf{B}}^s(n)$.
	
	Given $T^\prime\in\bar{\mathsf{B}}(n)$ and $k=(k_1,\dots,k_n)\in\mathbb{Z}_{\geq0}^n$, define a set
	\begin{align}\nonumber
		\mathcal{T}(T^\prime,k):=\{\hspace{.16667em}T^{\prime\prime}\in\bar{\mathsf{B}}^s(n)\mid \text{$T^{\prime\prime}$ can be obtained by attaching tails to $T^\prime$}& \\  \text{so that $a_i(T^{\prime\prime})=k_i$ ($1\leq \forall i\leq n$)}&\hspace{.16667em}\}.\label{eqn:rooted tree attach tails}
	\end{align}
	\begin{defn}[Brace operations via trees]\label{defn:brace operation tree}
		Let $\mathcal{O}$ be a dg operad. Define linear maps \begin{equation*}\kappa_n:\mathsf{B}(n)\to\Hom(\tilde{\mathcal{O}}^{\ot n},\tilde{\mathcal{O}}),\hspace{.5em}\bar{\kappa}_n:\bar{\mathsf{B}}(n)\to\Hom(\tilde{\mathcal{O}}^{\ot n},\tilde{\mathcal{O}})\end{equation*} as follows.
		$\kappa_n$ is the restriction of $\bar{\kappa}_n$. For $T^\prime\in\bar{\mathsf{B}}^s(n)$ and $f_i\in\mathcal{O}(k_i)$ ($1\leq i\leq n$),
		\begin{equation*}
			\bar{\kappa}_n(T^\prime)(f_1,f_2,\dots, f_n):=\sum_{T^{\prime\prime}\in\mathcal{T}(T^\prime,k)}\pm\bar{\kappa}_n(T^{\prime\prime})(f_1,f_2,\dots, f_n),
		\end{equation*}
		where by convention summation over the empty set is zero. If $a_i(T^\prime)=k_i$ ($1\leq i\leq n$), then $\bar{\kappa}_n(T^\prime)(f_1,\dots,f_n)$ is the operadic composition of $f_1,\dots,f_n$ in the obvious way described by $T^\prime$, where $f_i$ is assigned to $v_i(T^\prime)$. The sign $\pm$ is from iteration of \eqref{eqn:operadic suspension}.
	\end{defn}

	Definition \ref{defn:brace operation formula} and Definition \ref{defn:brace operation tree} describe the same operations on $\tilde{\mathcal{O}}$, as explained below. Consider $\beta_n\in\mathsf{B}^s(n+1)$ characterized by: $E_{\beta_n}=\{e_1,\dots,e_n\}$, $V_{\beta_n}=\{v_1,\dots,v_{n+1}\}$ where $v_i$ is labeled by $i$, $L_{\beta_n}=\{l_1\}$, $v_1=l_1\cap e_1\cap\dots\cap e_n$, $v_{i+1}\in e_i$ ($1\leq \forall i\leq n$), and the cyclic order on $E_{v_1}\cup L_{v_1}$ is $(l_1,e_1,\dots,e_n)$. Then $\kappa_{n+1}(\beta_n)\in\Hom(\tilde{\mathcal{O}}^{\ot n+1},\tilde{\mathcal{O}})$ is exactly given by \eqref{eqn:brace}. Moreover, putting $\mathsf{B}(n)$ in degree $n-1$, $\mathsf{B}=\{\mathsf{B}(n)\}$ carries a \emph{reduced} (meaning $\mathsf{B}(0)=0$ and $\mathsf{B}(1)=\mathbb{R}$) graded operad structure (see \cite[Definition 2.11 \& 2.13]{Ward}) so that $\{\beta_n\}_{n\geq0}$ generates $\mathsf{B}$ under operadic compositions and symmetric permutations, and 
	\begin{equation}\label{eqn:brace action}
		\kappa=\{\kappa_n\}:\mathsf{B}\to\mathcal{E}nd_{\tilde{\mathcal{O}}}
	\end{equation}
	is a morphism of operads. $\mathsf{B}$ is called the \emph{brace operad}, which tautologically controls brace operations on \emph{brace algebras}, i.e. algebras over $\mathsf{B}$. We have just seen that $\tilde{\mathcal{O}}$ is naturally a brace algebra. For the purpose of this paper we omitted details of operadic compositions on $\mathsf{B}$  but explained how $\kappa$ is defined.
	
	In \cite[Section 3.2]{Ward}, Ward introduced an operad $\mathsf{B}_{\circlearrowleft}$ which he called \emph{cyclic brace operad}. Let $\mathsf{B}^s_{\circlearrowleft}(n)$ be the set of oriented $n$-labeled planar trees without tails. Then $\mathsf{B}_{\circlearrowleft}(n)$ is the graded vector space spanned by $\mathsf{B}_{\circlearrowleft}^s(n)$ modulo the relation that reversing direction on an edge produces a negative sign. If there is no risk of confusion, we will by abuse of notation not distinguish $T_{\circlearrowleft}\in\mathsf{B}_{\circlearrowleft}^s(n)$ from its image in $\mathsf{B}_{\circlearrowleft}(n)$. 
	There is a morphism of operads $\rho:\mathsf{B}_{\circlearrowleft}\to\mathsf{B}$, which is
	induced by maps
	\begin{equation}\label{eqn:tree add root}
		\rho^s_n:\mathsf{B}^s_{\circlearrowleft}(n)\to \mathsf{B}(n),\hspace{.5em}T_{\circlearrowleft}\mapsto \sum_{T\in\mathcal{R}_1(T_{\circlearrowleft})}(-1)^{\varepsilon(T_{\circlearrowleft},T)} T,
	\end{equation}
	where $\mathcal{R}_1(T_{\circlearrowleft})$ is the set of labeled planar rooted trees that can be obtained by adding a root to the (non-rooted) tree underlying $T_{\circlearrowleft}$, and $\varepsilon(T_{\circlearrowleft},T)$ is the number of edges in $E_{T_{\circlearrowleft}}=E_T$ whose direction from $T_{\circlearrowleft}$ does not agree with the direction from the rooted structure of $T$.
	Here and hereafter, in appropriate contexts we use $f^s$ to denote a set-theoretic map which induces a linear map $f$.
	
	A natural example of cyclic brace algebras, i.e. algebras over $\mathsf{B}_{\circlearrowleft}$, is as follows.
	
	\begin{exmp}[{\cite[Corollary 3.11]{Ward}}]\label{exmp:cyclic brace algebra}
		Let $\tilde{\mathcal{O}}$ be a dg cyclic operad. Consider $\tilde{\mathcal{O}}^{\mathrm{cyc}}\subset\tilde{\mathcal{O}}$ and $\kappa:\mathsf{B}\to\mathcal{E}nd_{\tilde{\mathcal{O}}}$ in \eqref{eqn:brace action}. For $T\in\mathsf{B}^s_\circlearrowleft(n)$, $\rho(T)\in\mathsf{B}(n)$ and $(\kappa\circ\rho)(T)\in\Hom(\tilde{\mathcal{O}}^{\ot n},\tilde{\mathcal{O}})$. Restricting $(\kappa\circ\rho)(T)$ to $\tilde{\mathcal{O}}^{\mathrm{cyc}}$ gives an element in $\Hom((\tilde{\mathcal{O}}^{\mathrm{cyc}})^{\ot n},\tilde{\mathcal{O}})$. Moreover, if $f_i\in\mathcal{O}(k_i)$ ($1\leq i\leq n$) are cyclic invariant, then so is $(\kappa\circ\rho)(T)(f_1,\dots,f_n)$. (Such a claim appears in \cite[Theorem 5.5]{Ward} where it is referred to \cite[Proposition 3.10]{Ward}, but there is no direct proof given in \cite{Ward}. We will give a direct proof in a slightly different situation.) Hence $\kappa\circ\rho$ gives a morphism $\mathsf{B}_\circlearrowleft\to \mathcal{E}nd_{\tilde{\mathcal{O}}^{\mathrm{cyc}}}$.
	\end{exmp}
	\begin{defn}[Cyclic brace operations]\label{defn:cyclic brace operation}
		Let $\mathcal{O}$ be a dg cyclic operad. The cyclic brace operations on $\tilde{\mathcal{O}}^{\mathrm{cyc}}$ are those characterized by the linear maps
		\begin{equation*}
			\kappa_n\circ\rho_n:\mathsf{B}_\circlearrowleft(n)\to\Hom((\tilde{\mathcal{O}}^{\mathrm{cyc}})^{\ot n},\tilde{\mathcal{O}}^{\mathrm{cyc}})
		\end{equation*}
		discussed in Example \ref{exmp:cyclic brace algebra}.
	\end{defn}
	\begin{rem}
		It seems hard to write a direct formula for cyclic brace operations on $\tilde{\mathcal{O}}^{\mathrm{cyc}}$ in terms of operadic compositions, in a way as explicit as \eqref{eqn:brace}.
	\end{rem}
	
	Consider $\bar{\mathsf{B}}^s_{\circlearrowleft}(n)\supset\mathsf{B}^s_{\circlearrowleft}(n)$ and $\bar{\mathsf{B}}_{\circlearrowleft}(n)\supset\mathsf{B}_{\circlearrowleft}(n)$ by extending the definitions to labeled planar trees with tails. There is a forgetful map
	\begin{equation*}
		w_n^s:\bar{\mathsf{B}}^s(n)\to\bar{\mathsf{B}}^s_{\circlearrowleft}(n)\setminus\mathsf{B}^s_{\circlearrowleft}(n)
	\end{equation*}
	forgetting the choice of root but keeping the orientation from rooted structure. Note that $w^s_n$ induces $w_n:\bar{\mathsf{B}}(n)\to\bar{\mathsf{B}}_{\circlearrowleft}(n)/\mathsf{B}_{\circlearrowleft}(n)$.
	There is also a map
	\begin{equation*}
		r^s_n:\bar{\mathsf{B}}^s_{\circlearrowleft}(n)\setminus\mathsf{B}^s_{\circlearrowleft}(n)\to\bar{\mathsf{B}}(n),\hspace{.5em}T_{\circlearrowleft}^\prime\mapsto \sum_{T^\prime\in\mathcal{R}_0(T_{\circlearrowleft}^\prime)}(-1)^{\varepsilon(T_{\circlearrowleft}^\prime,T^\prime)} T^\prime,
	\end{equation*}
	where $\mathcal{R}_0(T_{\circlearrowleft}^\prime)$ is the set of $n$-labeled planar rooted trees obtained by choosing one of the tails in $T_{\circlearrowleft}^\prime$ as the root, and  $\varepsilon(T_{\circlearrowleft}^\prime,T^\prime)$ is defined similar to $\varepsilon(T_{\circlearrowleft},T)$ in \eqref{eqn:tree add root}. 
	
	It is clear that $(w_n\circ r_n)(T_{\circlearrowleft}^\prime)=|\mathcal{R}_0(T_{\circlearrowleft}^\prime)|\cdot T_{\circlearrowleft}^\prime$. To describe $r_n\circ w_n$, consider a map
	\begin{equation*}
		t_n^s:\bar{\mathsf{B}}^s(n)\to\bar{\mathsf{B}}^s(n)
	\end{equation*}
	so that $T^\prime$ and $t_n^s(T^\prime)$ are the same after forgetting the root, and the root of $t_n^s(T^\prime)$ is the first non-root tail of $T^\prime$ (if there are no non-tail roots then $t_n^s(T^\prime)=T^\prime$).
	Then for any $T^\prime\in\mathcal{R}_0(T_{\circlearrowleft}^\prime)$, we have $\mathcal{R}_0(T_{\circlearrowleft}^\prime)=\{T^\prime,t_n^s(T^\prime),\dots,(t_n^s)^p(T^\prime)\}$, where $p=|\mathcal{R}_0(T_{\circlearrowleft}^\prime)|-1$. It follows that 
	\begin{equation}\label{eqn:cyclic permute all}
		(r_n\circ w_n)(T^\prime)=\varepsilon(T_{\circlearrowleft}^\prime,T^\prime)\cdot r_n(T_{\circlearrowleft}^\prime)=\sum_{0\leq i\leq|\mathcal{R}_0(T_{\circlearrowleft}^\prime)|-1}(-1)^{\varepsilon(T^\prime,t_n^i(T^\prime))}\cdot t_n^i(T^\prime).
	\end{equation}
	Here $\varepsilon(T^\prime,t_n^i(T^\prime))$ is the number of edges in $E_{T^\prime}=E_{t_n^i(T^\prime)}$ whose direction towards the root of $T^\prime$ does not agree with the direction towards the root of $t_n^i(T^\prime)$.
	
	Given $k=(k_1,\dots,k_n)\in\mathbb{Z}_{\geq0}^n$, define a map 
	\begin{equation*}
		\nu_k^s:\mathsf{B}^s_{\circlearrowleft}(n)\to\bar{\mathsf{B}}_{\circlearrowleft}(n),\hspace{.5em}T_{\circlearrowleft}\mapsto \sum_{T_{\circlearrowleft}^\prime\in\mathcal{T}(T_{\circlearrowleft},k)} T_{\circlearrowleft}^\prime,
	\end{equation*} 
	where $\mathcal{T}(k,T_{\circlearrowleft})\subset\bar{\mathsf{B}}^s_{\circlearrowleft}(n)$ is defined similar to $\mathcal{T}(k,T)$ in \eqref{eqn:rooted tree attach tails}. 
	
	\begin{lem}\label{lem:cyclic brace action two equiv}
		Let $\mathcal{O}$ be a dg operad. For any $T_\circlearrowleft\in\mathsf{B}^s_{\circlearrowleft}(n)$ and $f_i\in\mathcal{O}(k_i)$ ($1\leq i\leq n$), there holds
		\begin{equation*}
			(\kappa_n\circ\rho_n^s)(T_{\circlearrowleft})(f_1,\dots,f_n)=(\bar{\kappa}_n\circ r_n\circ\nu_k^s)(T_{\circlearrowleft})(f_1,\dots,f_n).
		\end{equation*}
	\end{lem}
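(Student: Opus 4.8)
The plan is to read both sides as the linear map $\bar{\kappa}_n(-)(f_1,\dots,f_n)$ applied to a combination of trees, and then to compare these combinations term by term. Since $\kappa_n$ is the restriction of $\bar{\kappa}_n$, the left-hand side equals $\bar{\kappa}_n(\rho_n(T_\circlearrowleft))(f_1,\dots,f_n)$ and the right-hand side equals $\bar{\kappa}_n(r_n(\nu_k^s(T_\circlearrowleft)))(f_1,\dots,f_n)$. By Definition \ref{defn:brace operation tree}, evaluating $\bar{\kappa}_n$ on $f_1,\dots,f_n$ with $f_i\in\mathcal{O}(k_i)$ only records the completions of a tree to arities $k=(k_1,\dots,k_n)$: it kills any tree whose arities cannot be raised to $k$ by grafting tails, and on the remainder it replaces each tree by the (signed) sum of its arity-$k$ completions, each evaluated as the operadic composition of the $f_i$ read off from the planar rooted structure. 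Because of the associativity axioms \eqref{eqn:operad associativity 1}--\eqref{eqn:operad associativity 2} for the operadic suspension $\mathfrak{s}\mathcal{O}$, this composition depends only on the completed tree, not on the order in which one composes the edges and grafts the tails. So the goal is to prove that both sides produce the \emph{same} signed combination of admissible trees $T''$ -- by which I mean arity-matching, fully-tailed, $n$-labeled planar rooted trees whose underlying labeled planar tree (forget root, tails and orientation) is that of $T_\circlearrowleft$.

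First I would set up the bijection of index sets. On the left, an admissible $T''$ arises from a unique $T\in\mathcal{R}_1(T_\circlearrowleft)$, namely $T=$ (delete every non-root tail of $T''$), together with the completion $T''\in\mathcal{T}(T,k)$ of \eqref{eqn:rooted tree attach tails}. On the right, the same $T''$ arises from a unique pair: $T'_\circlearrowleft\in\mathcal{T}(T_\circlearrowleft,k)$, obtained by demoting the root tail of $T''$ to an ordinary tail and reinstating the orientation of $T_\circlearrowleft$ on the edges, together with the root choice $T''\in\mathcal{R}_0(T'_\circlearrowleft)$. A valence count checks that these two routes really produce the same trees: if the root of $T''$ sits at $v_j$, then in both constructions $v_j$ ends up with $k_j-a_j(T_\circlearrowleft)$ tails (on the left: one root tail grafted by $\rho$ plus $k_j-a_j(T_\circlearrowleft)-1$ grafted inside $\kappa_n$; on the right: $k_j-a_j(T_\circlearrowleft)$ tails grafted by $\nu_k^s$, one of which $r_n$ selects as root), and every other vertex $v_i$ carries $k_i-a_i(T_\circlearrowleft)$ non-root tails on both sides. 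Hence each admissible $T''$ is indexed exactly once on each side.

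Next I would match the orientation signs. The sign $(-1)^{\varepsilon(T_\circlearrowleft,T)}$ from \eqref{eqn:tree add root} counts the edges of $T_\circlearrowleft$ whose given orientation disagrees with the one induced by the chosen root; the sign $(-1)^{\varepsilon(T'_\circlearrowleft,T'')}$ on the right counts exactly the same thing, since $\nu_k^s$ grafts tails without disturbing the orientation and the root selected by $r_n$ is the very tail that is the root of $T''$. As tails carry no orientation and the two pairs $(T,T'')$ and $(T'_\circlearrowleft,T'')$ share both the edge set (that of $T_\circlearrowleft$) and the root, these edge-counts coincide, so the orientation signs agree.

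The hard part will be the operadic-suspension signs, because the two constructions graft the input tails and single out the root in opposite orders: on the left the root is fixed first and the remaining tails are grafted afterwards inside $\bar{\kappa}_n$, whereas on the right all tails are grafted first and the root is chosen last, so a priori the signs iterated from \eqref{eqn:operadic suspension} need not line up. I would dispatch this by the path-independence noted in the first paragraph: for each admissible $T''$ both sides ultimately record $\bar{\kappa}_n(T'')(f_1,\dots,f_n)$, the single operadic composition of the $f_i$ along $T''$ in $\mathfrak{s}\mathcal{O}$, whose sign is intrinsic to $T''$ by associativity and therefore insensitive to whether the tails were grafted before or after the root was chosen. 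Combined with the equality of orientation signs and the bijection of index sets, this identifies the two expansions term by term. Should a fully self-contained verification be required instead, I would instead induct on the number of grafted tails, checking that grafting one additional tail before versus after the choice of root alters the iterated suspension sign by the same factor on both sides; this inductive sign bookkeeping is the step I expect to be the most delicate.
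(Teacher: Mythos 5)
Your proposal is correct and follows essentially the same route as the paper, whose entire proof is the index-set identity $\bigcup_{T\in\mathcal{R}_1(T_{\circlearrowleft})}\mathcal{T}(T,k)=\bigcup_{T_{\circlearrowleft}^\prime\in\mathcal{T}(T_{\circlearrowleft},k)}\mathcal{R}_0(T_{\circlearrowleft}^\prime)$ followed by ``the result follows.'' Your additional verification of the orientation signs and the observation that the operadic-suspension sign is intrinsic to the completed tree $T''$ fills in details the paper leaves implicit.
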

	\begin{proof}
		Consider the set of labeled planar rooted trees whose vertices have arities equal to $k=(k_1,\dots,k_n)$ in accordance with the labeling. Such a set can be represented as
		\begin{equation*}  \bigcup_{T\in\mathcal{R}_1(T_{\circlearrowleft})}\mathcal{T}(T,k)=\bigcup_{T_{\circlearrowleft}^\prime\in\mathcal{T}(T_{\circlearrowleft},k)}\mathcal{R}_0(T_{\circlearrowleft}^\prime),\end{equation*}
		and the result follows.
	\end{proof}
	
	In the rest of this section, we take $\mathcal{O}=\mathcal{E}nd_A$, where $A$ is a dg algebra endowed with a symmetric, cyclic, bilinear form $\langle,\rangle$ of degree $m$. Recall from Example \ref{exmp:end operad} that $\langle,\rangle$ induces $\theta:A\to A^\vee[m]$ and $\Theta:\CH(A,A)\to\CH(A,A^\vee[m])$.
	To deal with signs, we may work with $A[1]$ instead of $A$. As explained in Appendix \ref{section:sign rule}, the symmetric bilinear form $\langle,\rangle$ on $A$ becomes anti-symmetric on $A[1]$, and the cyclic permutation ${\tau}_k$ on $\Hom(A^{\ot k+1},\mathbb{R})$ reads as $\tilde{\tau}_k=(-1)^k\tau_k=\lambda$ on $\Hom(A[1]^{\ot k+1},\mathbb{R})$. Since $\mathfrak{s}(\mathcal{E}nd_A)\cong\mathcal{E}nd_{A[1]}$, there is no need to take operadic suspension of $\mathsf{B}_{\circlearrowleft}$, and  $\mathsf{B}_{\circlearrowleft}(n)$ stands in degree 0 when dealing with $A[1]$.
	
	Since the pairing $\langle,\rangle$ 
	is not necessarily nondegenerate, there is not always a cyclic structure on $\mathcal{E}nd_{A}$ compatible with cyclic permutations on $\{\Hom(A^{\ot k+1},\mathbb{R})\}$ via the map $\Hom(A^{\ot k},A)\to\Hom(A^{\ot k},A^\vee[m])$ induced by $\langle,\rangle$, so the discussion of Example \ref{exmp:cyclic brace algebra} does not directly apply here. However, the following is true. 
	
	\begin{prop}\label{prop:cyclic brace action on CHcyc}
		There is a natural action of $\mathsf{B}_{\circlearrowleft}$ on $\Theta^{-1}(\prod_{k\geq 0}\Hom^{*+m}_{\mathrm{cyc}}(A^{\otimes k+1},\mathbb{R}))$.
	\end{prop}
	\begin{proof}
		(This proposition is irrelevant to the multiplication on $A$; compare Remark \ref{rem:end operad vs cyclic}.) Similar to Example \ref{exmp:cyclic brace algebra}, it suffices to show if $T_\circlearrowleft\in\mathsf{B}^s_\circlearrowleft(n)$ and $f_i\in\Hom(A^{\ot k_i},A)$ is weakly cyclic invariant in the sense that $\lambda(\theta\circ f_i)=\theta\circ f_i$, then $(\kappa\circ\rho)(T_\circlearrowleft)(f_1,\dots,f_n)$ is weakly cyclic invariant. 
		This is immediate from Lemma \ref{lem:cyclic brace action two equiv} and Lemma \ref{lem:cyclic brace invariant} below.
	\end{proof}
	\begin{cor}\label{cor:homotopy gravity action}
		The cochain complex $(\Theta^{-1}(\CH_{\mathrm{cyc}}(A,A^\vee[m])),d+\delta)$ admits an action of $\mathsf{M}_{\circlearrowleft}$. Moreover, if $\theta$ is a quasi-isomorphism and $\Theta$ restricts to a quasi-isomrphism from $\Theta^{-1}(\CH_{\mathrm{cyc}}(A,A^\vee[m]))$ to $\CH_{\mathrm{cyc}}(A,A^\vee[m])$, this $\mathsf{M}_{\circlearrowleft}$-action lifts the gravity algebra structure on $\HC_\lambda^*(A,A^\vee[m])$ induced by the $BV$ algebra structure on $\mathrm{HH}^*(A,A^\vee[m])$ (see Example \ref{exmp:end operad} \ref{item:statement D}).
	\end{cor}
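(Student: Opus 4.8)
The plan for the first assertion is to mimic the proof of Proposition \ref{prop:cyclic operad induce structure}(v), replacing the genuine cyclic operad structure (unavailable here, since $\theta$ need not be an isomorphism) by the $\mathsf{B}_{\circlearrowleft}$-algebra structure on $\Theta^{-1}(\mathrm{CH}_{\mathrm{cyc}}(A,A^\vee[m]))$ furnished by Proposition \ref{prop:cyclic brace action on CHcyc}. The only additional input needed is a Maurer--Cartan element. I would take $\mu\in\mathcal{E}nd_A(2)$, $\mu(a_1\ot a_2)=a_1\boldsymbol{\cdot}a_2$, and first check that $\mu$ is weakly cyclic invariant, i.e.\ $\mu\in\Theta^{-1}(\mathrm{CH}_{\mathrm{cyc}}(A,A^\vee[m]))$: this is exactly the identity $\lambda(\theta\circ\mu)=\theta\circ\mu$, which unwinds to the cyclicity \eqref{eqn:<> cyclic invariant} of $\langle,\rangle$. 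Setting $\zeta_2=-\mu$ and $\zeta_k=0$ $(k\neq2)$ then gives a Maurer--Cartan element in the $\mathsf{B}_{\circlearrowleft}$-algebra $\Theta^{-1}(\mathrm{CH}_{\mathrm{cyc}}(A,A^\vee[m]))$, and \cite[Theorem C]{Ward} produces the desired action of $\mathsf{M}_{\circlearrowleft}$. The twisting by $\zeta$ converts the internal differential $d$ of $\mathcal{E}nd_A$ into $d+\delta$, where $\delta$ is the cosimplicial differential \eqref{eqn:operad cosimplicial} built from $\mu$; this is precisely the differential named in the statement.

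For the second assertion, I would first record that the hypotheses give isomorphisms
$$
H_*\big(\Theta^{-1}(\mathrm{CH}_{\mathrm{cyc}}(A,A^\vee[m])),d+\delta\big)\xrightarrow{\ \cong\ }\mathrm{HC}_\lambda^*(A,A^\vee[m]),
$$
the map being induced by the quasi-isomorphism to which $\Theta$ restricts. Through this isomorphism the $\mathsf{M}_{\circlearrowleft}$-action of the first part induces, via $H_*(\mathsf{M}_{\circlearrowleft})\cong\mathcal{G}rav$ (Example \ref{exmp:operads}(iii)), a gravity algebra structure on $\mathrm{HC}_\lambda^*(A,A^\vee[m])$, and the task is to identify it with the gravity structure of Example \ref{exmp:end operad}(D). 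The latter is, by definition, the one obtained from the BV algebra structure on $\mathrm{HH}^*(A,A)\cong\mathrm{HH}^*(A,A^\vee[m])$ of Example \ref{exmp:end operad}(C) through Lemma \ref{lem:BV induce gravity}, with BV operator equal to the map induced by Connes' operator $B$.

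The identification itself would again follow the template of Proposition \ref{prop:cyclic operad induce structure}(v). On the one hand, \cite[Theorem C]{Ward} describes the gravity operations coming from $\mathsf{M}_{\circlearrowleft}$ in terms of the cyclic brace operations together with the class of $B$; on the other hand, the second bracket is pinned down by the arity-$2$ part of the action, which by Definition \ref{defn:cyclic brace operation} is the cyclic brace operation and hence, on homology, the restriction of the Gerstenhaber bracket to weakly cyclic invariants --- this matches the BV-induced Lie bracket via Lemma \ref{lem:BV induce gravity}(ii). Since both the cyclic brace operations and Connes' operator $B$ are already defined on the chain complex $\Theta^{-1}(\mathrm{CH}_{\mathrm{cyc}}(A,A^\vee[m]))$, the comparison used for a genuine cyclic operad should apply here, the only change being that the isomorphism $\mathrm{HH}^*(A,A^\vee[m])\cong\mathrm{HH}^*(A,A)$ carrying the BV structure is now the quasi-isomorphism $\Theta$ rather than an equality.

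The main obstacle, as I see it, is precisely this last point: because $\theta$ is only a quasi-isomorphism, $\mathcal{E}nd_A$ carries no honest cyclic structure, so Proposition \ref{prop:cyclic operad induce structure}(v) cannot be cited directly and one must check that \cite[Theorem C]{Ward} and the Menichi BV structure of Example \ref{exmp:end operad}(C) interact correctly on the weakly-cyclic-invariant subcomplex. Concretely, the delicate step is to verify that the BV operator appearing in the output of Ward's theorem is identified, under $\Theta$, with Connes' operator $B$ governing the structure of Example \ref{exmp:end operad}(D), and that the resulting gravity structure is independent of the auxiliary choices up to the homotopies built into $\mathsf{M}_{\circlearrowleft}$. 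All remaining compatibilities are formal consequences of the $\mathsf{B}_{\circlearrowleft}$-algebra structure and Lemma \ref{lem:BV induce gravity}.
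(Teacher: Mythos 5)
Your proposal follows essentially the same route as the paper: the $\mathsf{M}_{\circlearrowleft}$-action comes from Ward's Theorem C with Maurer--Cartan element $\zeta_2=-\mu$, the preservation of weakly cyclic invariants reduces to Proposition \ref{prop:cyclic brace action on CHcyc} together with the weak cyclic invariance of $\mu_A$ (i.e.\ \eqref{eqn:<> cyclic invariant}), and the homology-level identification goes through Lemma \ref{lem:BV induce gravity} applied to $\alpha=i_{\lambda*}$, $\beta=B_\lambda$. The one step you flag as an unresolved obstacle --- matching the BV-induced $k$-th brackets with the operations produced by Ward's theorem --- is closed in the paper not by re-identifying the BV operator under $\Theta$ but by a direct chain-level computation: the bracket $\beta(\alpha(x_1)\boldsymbol{\cdot}\cdots\boldsymbol{\cdot}\alpha(x_k))$ is represented by the iterated cup product $\mu_A\{f_1,\mu_A\{f_2,\cdots\}\}$, i.e.\ by a black/white tree with $k-1$ adjacent black vertices, and contracting the black--black edges (per \cite[Appendix A, (A.10)]{Ward}) yields precisely the single-black-vertex trees generating $H_*(\mathsf{M}_{\circlearrowleft})\cong\mathcal{G}rav$; no separate comparison of BV operators is needed since $B_\lambda$ already appears as $\beta$ in the definition of the target structure.
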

	\begin{proof}
		As explained in the proof of \cite[Theorem 5.5]{Ward}, to show $\mathsf{M}_{\circlearrowleft}$ acts on the space of weakly cyclic invariants, it suffices to consider cyclic brace operations, which is nothing but Proposition \ref{prop:cyclic brace action on CHcyc}. In more details, Ward \cite{Ward} defined a dg operad $\mathsf{M}$ using ``$A_\infty$-labeled planar rooted black/white trees'' ($\mathsf{M}$ is isomorphic to the ``minimal operad'' of Kontsevich-Soibelman \cite{KS}), and $\mathsf{M}_{\circlearrowleft}$ is the non-rooted version of $\mathsf{M}$. $\mathsf{M}$ contains $\mathsf{B}$ as a graded suboperad, and acts on $\CH(A,A)$ extending brace operations. What $\mathsf{M}$ does more than $\mathsf{B}$ to $\CH(A,A)$ is generated by the operation $(f,g)\mapsto \mu_A\{f,g\}$ where $\mu_A\in\Hom(A^{\ot 2},A)$ is the multiplication. The action of $\mathsf{M}_{\circlearrowleft}$ on $\Theta^{-1}(\CH_{\mathrm{cyc}}(A,A^\vee[m]))$ comes from an operad morphism $\mathsf{M}_{\circlearrowleft}\to\mathsf{M}$ which extends the morphism $\mathsf{B}_{\circlearrowleft}\to\mathsf{B}$ from \eqref{eqn:tree add root}. Therefore, that $\Theta^{-1}(\CH_{\mathrm{cyc}}(A,A^\vee[m]))$ is closed under the action of $\mathsf{M}_{\circlearrowleft}$ on $\CH(A,A)$ essentially follows from Proposition \ref{prop:cyclic brace action on CHcyc} and weakly cyclic invariance of $\mu_A$, i.e. \eqref{eqn:<> cyclic invariant}.
		
		Now we explain why the $\mathsf{M}_{\circlearrowleft}$-action induces exactly the gravity algebra structure from the BV structure on homology under quasi-isomorphism assumptions; this is just by definition. The gravity algebra structure on $\HC_\lambda^*(A,A^\vee[m])$ follows from Lemma \ref{lem:BV induce gravity} and \eqref{eqn:cyclic exact seq Connes} with $V_*=\mathrm{HH}^{-*}(A,A)=\mathrm{HH}^{-*}(A,A^\vee[m])$, 
		$W_*=\HC_\lambda^{-*}(A,A^\vee[m])$, $\alpha=i_{\lambda*}$, $\beta=B_\lambda$. The product $\boldsymbol{\cdot}$ \eqref{eqn:cyclic operad total complex prod} on $\CH(A,A)$ is just the cup product $(f,g)\mapsto\mu_A\{f,g\}$, so the $k$-th gravity bracket \eqref{eqn:BV induces gravity} on $\HC_\lambda(A,A^\vee[m])$ is induced by the operation $(f_1,f_2,\dots,f_k)\mapsto\mu_A\{f_1,\mu_A\{f_2,\mu_A\{\cdots,\mu_A\{f_{k-1},f_k\}\cdots\}\}\}$ at chain level, which is represented by a certain black/white tree with $k-1$ adjacent black vertices labeled by $\mu_A$. Edges with both black vertices in such a tree should be contracted, creating a new tree with a single black vertex, see \cite[Appendix A, (A.10)]{Ward}. This gives exactly the trees representing the generators of $H_*(\mathsf{M}_{\circlearrowleft})$, see \cite[Definition 5.12, Figure 2]{Ward}.
	\end{proof}
	\begin{lem}\label{lem:cyclic brace invariant}
		Let $T^\prime_{\circlearrowleft}\in\bar{\mathsf{B}}_{\circlearrowleft}(n)$, $T^\prime\in\mathcal{R}_0(T^\prime_{\circlearrowleft})$, $f_i\in\Hom(A[1]^{\ot k_i},A[1])$ where $k_i=a_i(T^\prime)$ ($1\leq i\leq n$). Suppose every $f_i$ is weakly cyclic invariant. Then
		\begin{equation*}
			\theta\circ((\bar{\kappa}_n\circ r_n)(T^\prime_\circlearrowleft)(f_1,\dots,f_n))=\varepsilon(T_{\circlearrowleft}^\prime,T^\prime)\cdot N(\theta\circ(\bar{\kappa}_n(T^\prime)(f_1,\dots,f_n))).
		\end{equation*}	
	\end{lem}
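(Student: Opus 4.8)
Let me understand what Lemma \ref{lem:cyclic brace invariant} is claiming. We have a labeled planar tree with tails $T'_\circlearrowleft$, a rooted version $T' \in \mathcal{R}_0(T'_\circlearrowleft)$ obtained by choosing one tail as root, and weakly cyclic invariant operations $f_i$. The claim relates $\theta$ applied to the "sum over all rootings" operation $(\bar\kappa_n \circ r_n)(T'_\circlearrowleft)$ with $N$ (the norm operator $1 + \lambda + \cdots + \lambda^k$) applied to $\theta$ composed with the single-rooting operation $\bar\kappa_n(T')$.

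So the key insight: $r_n(T'_\circlearrowleft) = \sum_{T'' \in \mathcal{R}_0(T'_\circlearrowleft)} (-1)^{\varepsilon(\cdots)} T''$ sums over all choices of root tail. Each rooted tree $T''$ gives an operadic composition, and after applying $\theta$, the Frobenius/cyclic property \eqref{eqn:<> cyclic invariant} should let us rewrite each term of different rooting as a cyclic permutation $\lambda^j$ of the single term from $T'$. The sum over all rootings then becomes $N$ applied to the single term.

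**The approach.**

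The plan is to fix the reference rooted tree $T'$ and run over the elements of $\mathcal{R}_0(T'_\circlearrowleft)$ via the cyclic-shift map $t_n$ (as in \eqref{eqn:cyclic permute all}). Using \eqref{eqn:cyclic permute all}, I have the clean combinatorial identity
$$
r_n(T'_\circlearrowleft) = \frac{1}{\varepsilon(T_{\circlearrowleft}^\prime,T^\prime)}\sum_{0\leq i\leq|\mathcal{R}_0(T_{\circlearrowleft}^\prime)|-1}(-1)^{\varepsilon(T^\prime,t_n^i(T^\prime))}\, t_n^i(T^\prime),
$$
so it suffices to analyze one term $\theta \circ (\bar\kappa_n(t_n^i(T'))(f_1,\dots,f_n))$ and show it equals $\lambda^{j_i}$ applied to $\theta \circ (\bar\kappa_n(T')(f_1,\dots,f_n))$ for the appropriate power $j_i$, with signs matching.

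First I would reduce to the case $n=1$ (a single vertex), or more precisely establish the "elementary move": if $T'$ and $t_n^s(T')$ differ by shifting the root to the next tail, then $\theta$ applied to their operations differ by one application of $\lambda$. This is where the definition \eqref{eqn:bimodule map A to dual} of $\theta$ and the cyclic invariance \eqref{eqn:<> cyclic invariant} of $\langle,\rangle$ enter: moving the root tail corresponds, under $\theta$, precisely to the cyclic permutation of the inputs/output implemented by $\lambda$. The weak cyclic invariance $\lambda(\theta \circ f_i) = \theta \circ f_i$ of each vertex operation ensures that internal edge contributions are unaffected, so only the global cyclic shift survives. Iterating the elementary move $i$ times gives $\lambda^i$, and summing over $i$ from $0$ to $|\mathcal{R}_0(T'_\circlearrowleft)|-1$ assembles into $N = 1 + \lambda + \cdots + \lambda^{|\mathcal{R}_0|-1}$ (after checking that the number of tails matches the arity so that $N$ truncates correctly). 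The factor $\varepsilon(T_{\circlearrowleft}^\prime,T^\prime)$ on the right accounts for the normalization in \eqref{eqn:cyclic permute all}.

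**The main obstacle.**

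The hard part will be tracking signs through the cyclic shift. I expect three independent sign sources to conspire: (1) the orientation signs $(-1)^{\varepsilon(T^\prime,t_n^i(T^\prime))}$ from reversing edge directions when the root moves, (2) the sign $(-1)^\varepsilon$ built into $\lambda = (-1)^k \tau_k$ and into the operadic/Koszul bookkeeping of \eqref{eqn:operadic suspension}, and (3) the antisymmetry of $\langle,\rangle$ on $A[1]$ noted in the paragraph preceding the lemma. The claim's clean form — a bare $N$ with no extra sign beyond $\varepsilon(T_{\circlearrowleft}^\prime,T^\prime)$ — asserts that sources (1) and (2) cancel the shift-signs in (3) exactly. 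Verifying this cancellation is the technical core; I would do it by first checking the single elementary move carefully (relating the edge-reversal sign for the root-tail shift to one factor of $\lambda$'s sign via \eqref{eqn:<> cyclic invariant} rewritten on $A[1]$), and then arguing that iteration is automatic since each move is identical in structure. Working throughout with $A[1]$ (so that $\tilde\tau_k = \lambda$ and $\mathsf{B}_\circlearrowleft(n)$ sits in degree $0$) should keep the signs manageable and is the reason for the reduction announced in the text.
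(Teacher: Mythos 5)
Your overall strategy is exactly the paper's: use \eqref{eqn:cyclic permute all} to reduce the statement to the single-shift identity
$\theta\circ((\bar{\kappa}_n\circ t_n)(T^\prime)(f_1,\dots,f_n))=\varepsilon(T^\prime,t_n(T^\prime))\cdot\lambda(\theta\circ(\bar{\kappa}_n(T^\prime)(f_1,\dots,f_n)))$,
and then verify that identity using weak cyclic invariance of the $f_i$ and the (anti)symmetry and cyclicity of $\langle,\rangle$ on $A[1]$. That reduction is correct and is where the paper starts as well.

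The gap is in your treatment of the elementary move itself, which is where essentially all of the work lies. You describe it as ``one application of $\lambda$'' with the edge-reversal sign matching ``one factor of $\lambda$'s sign,'' and assert that weak cyclic invariance makes the internal edges irrelevant so that ``only the global cyclic shift survives.'' This picture would only be accurate when the root tail and the first non-root tail are attached to the \emph{same} vertex (the case $k=1$ in the paper's notation, where a single use of cyclic invariance of $\theta\circ f_{i_1}$ suffices). In general they sit at the two ends of a path through $k$ vertices $v_{i_1},\dots,v_{i_k}$, the sign $\varepsilon(T^\prime,t_n(T^\prime))$ equals $(-1)^{k-1}$ (one for each edge on that path whose orientation reverses), and there is no single cyclic permutation to apply: one must cut the tree at the midpoints of these $k-1$ edges, write $\bar{\kappa}_n(T^\prime)(f_1,\dots,f_n)$ as an iterated partial composition of the resulting pieces $F_j^{\pm}$, and telescope the pairing $\langle\,\cdot\,,\cdot\,\rangle$ through the path --- at each of the $k$ vertices applying cyclic invariance of $\theta\circ f_{i_j}$ once and antisymmetry of $\langle,\rangle$ on $A[1]$ once, each swap contributing a factor $-1$. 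Only after this $k$-step computation do the accumulated signs assemble into $(-1)^{k-1}\lambda$. Your proposal names the right ingredients but does not supply this decomposition, and the one-step mechanism you sketch would not produce the sign $\varepsilon(T^\prime,t_n(T^\prime))$ when $k\geq2$; so the core of the argument is still missing.
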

	\begin{proof}
		In view of \eqref{eqn:cyclic permute all}, it suffices to prove the following equality:
		\begin{equation}\label{eqn:cyclic tree permute}
			\theta\circ((\bar{\kappa}_n\circ t_n)(T^\prime)(f_1,\dots,f_n))=\varepsilon(T^\prime,t_n(T^\prime))\cdot\lambda(\theta\circ(\bar{\kappa}_n(T^\prime)(f_1,\dots,f_n))).
		\end{equation}
		If there is only one tail or only one vertex (equivalently, no edges) in $T^\prime$, then $t_n$ acts trivially on $\bar{\mathsf{B}}(n)$, and \eqref{eqn:cyclic tree permute} is obvious. Now suppose there are at least two tails and at least one edge in $T^\prime$. Then there is a unique path in $T^\prime$ connecting the root $l_1$ to the first non-root tail $l_2$, consisting of successive edges with successive vertices $v_{i_1},v_{i_2},\dots,v_{i_k}$ ($k\geq1$), where $v_{i_1}$, $v_{i_k}$ are vertices of $l_1$, $l_2$, respectively. Note that these $k-1$ successive edges are the only edges in $T^\prime$ whose directions towards $l_1$ and $l_2$ disagree, so \begin{equation*}\varepsilon(T^\prime,t_n(T^\prime))=k-1.\end{equation*} 
		
		If $k=1$, \eqref{eqn:cyclic tree permute} simply follows from cyclic invariance of $\theta\circ f_{i_1}$. 
		
		If $k\geq2$, for each $j\in\{1,\dots,k-1\}$, denote the edge joining $v_{i_j}$ to $v_{i_{j+1}}$ by $[v_{i_j},v_{i_{j+1}}]$, which is identified with $[0,1]$. By removing $\frac{1}{2}\in[v_{i_j},v_{i_{j+1}}]$ for all $1\leq j\leq k-1$, $T^\prime$ is cut into $k$ pieces, where the $j$-th ($1\leq j\leq k$) piece $T_j$ contains $v_{i_j}$. By regarding $[v_{i_j},\frac{1}{2})$ and $(\frac{1}{2},v_{i_{j+1}}]$ as tails, these pieces become labeled planar trees, where the planar structures are induced from $T^\prime$, and the vertex labeling is in the same order as $T^\prime$: say the vertices of $T_j$ are labeled by $i_{j,1}<i_{j,2}<\dots<i_{j,n_j}$ in $T^\prime$, then put their labels as $1,2,\dots,n_j$ in $T_j$. Let $T_{j,-}^\prime$ (resp. $T_{j,+}^\prime$) be the labeled planar rooted tree obtained by choosing the tail $(\frac{1}{2},v_{i_j}]$ (resp. $[v_{i_j},\frac{1}{2})$) in $T_j$ as the root, where $(\frac{1}{2},v_{i_1}]$ is indeed $l_1$ and $[v_{i_k},\frac{1}{2})$ is indeed $l_2$. Suppose $(\frac{1}{2},v_{i_{j}}]$ (resp. $[v_{j},\frac{1}{2})$) is the $p_j$-th (resp. $q_j$-th) non-root tail in the total order on $L_{v_{i_{j}}}$ from the planar rooted structure of $T_{j,+}^\prime$ (resp. $T_{j,-}^\prime$). Since $l_2$ is the first non-root tail in $T^\prime$, we have $q_1=\dots=q_k=1$, and
		\begin{equation*}
			p_1+\dots+p_k-k+1=|L_{T^\prime}|-1=k_1+\dots+k_n+1-n=:K.
		\end{equation*}
		Denote
		\begin{equation*}
			F_j^+:=\bar{\kappa}_{n_j}(T_{j,+}^\prime)(f_{i_{j,1}},\dots,f_{i_{j,n_j}}),\hspace{.5em}F_j^-:=\bar{\kappa}_{n_j}(T_{j,-}^\prime)(f_{i_{j,1}},\dots,f_{i_{j,n_j}}).
		\end{equation*}
		Then for any $x_1,\dots,x_{K+1}\in A[1]$, there holds (Koszul sign $(-1)^{\varepsilon}$ is taken with respect to $A[1]$)
		\begin{align}\label{eqn:calculate cyclic brace}
			&\quad \langle(\bar{\kappa}_n\circ t_n)(T^\prime)(f_1,\dots,f_n)(x_1\ot\cdots\ot x_K),x_{K+1}\rangle\\\nonumber
			&=(-1)^{\varepsilon}\langle F_k^+\circ_{p_k}(F_{k-1}^+\circ_{p_{k-1}}(\cdots\circ_{p_2}F_1^+))(x_1\ot\cdots\ot x_K),x_{K+1}\rangle\\\nonumber
			&=(-1)^{\varepsilon}\langle F_k^-(\cdots\ot x_{p_k-1}),F_{k-1}^+\circ_{p_{k-1}}(\cdots\circ_{p_2}F_1^+)(x_{p_k}\ot\cdots)\rangle\\\nonumber
			&=-(-1)^{\varepsilon}\langle F_{k-1}^+\circ_{p_{k-1}}(\cdots\circ_{p_2}F_1^+)(x_{p_k}\ot\cdots),F_k^-(\cdots\ot x_{p_k-1})\rangle,
		\end{align}
		where the second equality follows from cyclic invariance of $\theta\circ f_{i_k}$, and the third equality follows from (anti-)symmetry of $\langle,\rangle$. Iterating the above calculation by cyclic invariance of $\theta\circ f_{i_{k-1}},\theta\circ f_{i_{k-2}},\dots,\theta\circ f_{i_1}$ and (anti-)symmetry of $\langle,\rangle$, we see that \eqref{eqn:calculate cyclic brace} is equal to
		\begin{align*}
			&\quad(-1)^k(-1)^{\varepsilon}\langle x_{p_1+\dots+p_k-k+1},F_1^-\circ_{q_1}(F_2^-\circ_{q_2}(\cdots\circ_{q_{k-1}}F_k^-))(\cdots\ot x_{p_1+\dots+p_k-k})\rangle\\
			&=(-1)^{k-1}(-1)^{\varepsilon}\langle F_1^-\circ_1(F_2^-\circ_1(\cdots\circ_1F_k^-))(x_{K+1}\ot x_1\ot\cdots\ot x_{K-1}),x_{K}\rangle\\
			&=(-1)^{\varepsilon(T^\prime,t_n(T^\prime))}(-1)^{\varepsilon}\langle\bar{\kappa}_n(T^\prime)(f_1,\dots,f_n)(x_{K+1}\ot x_1\ot\cdots),x_K\rangle.
		\end{align*}
		Since $\tilde{\tau}_K$ on $\Hom(A[1]^{\ot K},A[1]^\vee[m])$ corresponds to $\lambda$ on $\Hom(A^{\ot K},A^\vee[m])$, this proves \eqref{eqn:cyclic tree permute}. The proof is now complete.
	\end{proof}
	\begin{rem}
		It is easy to generalize Proposition \ref{prop:cyclic brace action on CHcyc} to $A_\infty$ algebras with cyclic invariant symmetric bilinear forms (not necessarily nondegenerate), and the proof is similar.
	\end{rem}   	
	
	\section{Chain level structures in $S^1$-equivariant string topology}\label{section:application to string topology} In this section we describe chain level structures in $S^1$-equivariant string topology, based on the previous results. Let us first review the initial homology level structures discovered by Chas-Sullivan, and the chain level construction due to Irie. 	
	
	\begin{exmp}[String topology BV algebra and gravity algebra]\label{exmp:string topology gravity} Let $M$ be a closed oriented manifold and $\mathcal{L}M$ be its free loop space. It was discovered by Chas-Sullivan in \cite{Chas-Sullivan string topology}\cite{Chas-Sullivan gravity} that:
		\begin{itemize}
			\item There is a BV algebra structure $(\Delta,\boldsymbol{\cdot})$ on $\mathbb{H}_*(\mathcal{L}M):=H_{*+\dim M}(\mathcal{L}M)$. Here $\Delta$ is induced by the $S^1$-action of rotating loops (i.e. $\Delta=J_*$ where $J$ is defined by \eqref{eqn:s1 action J}), $\boldsymbol{\cdot}$ is induced by concatenation of crossing loops and is called the \emph{loop product}. The associated Gerstenhaber bracket is called the \emph{loop bracket}. We call this BV algebra the \emph{string topology BV algebra}.
			\item There is a gravity algebra structure on $H_{*+\dim M-1}^{S^1}(\mathcal{L}M)$ (as an application of Lemma \ref{lem:BV induce gravity} to a part of the Gysin sequence \eqref{eqn:s1 equivariant gysin}), whose second bracket is the \emph{string bracket} (\cite[Theorem 6.1]{Chas-Sullivan string topology}) up to sign \eqref{eqn:sign suspension}. We call this gravity algebra the \emph{string topology gravity algebra}.
		\end{itemize}
	\end{exmp}
	A similar application of Lemma \ref{lem:BV induce gravity} to a part of the Connes-Gysin sequence \eqref{eqn:cyclic exact seq gysin [[u]]} for the mixed complex $(S_*(\mathcal{L}M),\partial,J)$, together with Lemma \ref{lem:cyclic exact seq gysin morphism} and Lemma \ref{lem:BV induce gravity morphism}, yields the following lemma.
	\begin{lem}\label{lem:gravity algebra negative s1}
		For any closed oriented manifold $M$, there is a gravity algebra structure on $G^{S^1}_{*+\dim M}(\mathcal{L}M)$, such that the natural map $H^{S^1}_{*+\dim M-1}(\mathcal{L}M)\to G^{S^1}_{*+\dim M}(\mathcal{L}M)$ in \eqref{eqn:s1 equivariant tautological} is a morphism of gravity algebras.\qed
	\end{lem}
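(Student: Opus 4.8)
The plan is to obtain the gravity structure directly from Lemma~\ref{lem:BV induce gravity} and the morphism property from Lemma~\ref{lem:BV induce gravity morphism}, with Lemma~\ref{lem:cyclic exact seq gysin morphism} supplying the two commutative squares that link the ordinary and negative Gysin sequences. Throughout, the ambient BV algebra is the string topology BV algebra $(\mathbb{H}_*(\mathcal{L}M),\boldsymbol{\cdot},\Delta)$ of Example~\ref{exmp:string topology gravity}; the essential point is that its BV operator $\Delta=J_*$ is induced by exactly the rotation operator $J$ that also serves as Connes' operator for the mixed complex $(S_*(\mathcal{L}M),\partial,J)$, so that both the ordinary Gysin sequence~\eqref{eqn:s1 equivariant gysin} and the negative Gysin--Connes sequence~\eqref{eqn:s1 equivariant negative gysin} are compatible with the BV structure.

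First I would construct the gravity algebra on $G^{\s}_{*+\dim M}(\mathcal{L}M)$. I apply Lemma~\ref{lem:BV induce gravity} with $V_*=\mathbb{H}_*(\mathcal{L}M)=H_{*+\dim M}(\mathcal{L}M)$, $\Delta=J_*$, $W_*=G^{\s}_{*+\dim M}(\mathcal{L}M)$, and with $\alpha=p_{0*}$ and $\beta=J_*=B_*$ the two structure maps of~\eqref{eqn:s1 equivariant negative gysin} (the homological form of~\eqref{eqn:cyclic exact seq gysin [[u]]}). This is precisely the negative-side instance, for the mixed complex $(S_*(\mathcal{L}M),\partial,J)$, of the recipe recorded in Proposition~\ref{prop:cyclic operad induce structure}(iv)(a): the relation $\beta\circ\alpha=0$ is exactness of~\eqref{eqn:s1 equivariant negative gysin}, while $\alpha\circ\beta=\Delta$ holds because the connecting map $\beta=B_*$ represents $[a]$ by the constant cycle $Ja$ and $\alpha=p_{0*}$ reads off its $v^0$-coefficient, returning $[Ja]=J_*[a]=\Delta[a]$. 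Lemma~\ref{lem:BV induce gravity}(i) then equips $G^{\s}_{*+\dim M}(\mathcal{L}M)$ with a gravity algebra structure.

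Next I would prove the morphism statement via Lemma~\ref{lem:BV induce gravity morphism}. The string topology gravity algebra on $H^{\s}_{*+\dim M-1}(\mathcal{L}M)$ of Example~\ref{exmp:string topology gravity} is built by Lemma~\ref{lem:BV induce gravity} from the same $V_*$, now with $\alpha_{\mathrm{ord}}=B_{0*}$ and $\beta_{\mathrm{ord}}=i_*$ taken from the ordinary Gysin sequence~\eqref{eqn:s1 equivariant gysin}. I then assemble the diagram required by Lemma~\ref{lem:BV induce gravity morphism}, with the ordinary data on top, the negative data on the bottom, the vertical map $g=\mathrm{id}_{\mathbb{H}_*(\mathcal{L}M)}$ (trivially a BV algebra homomorphism), and $f$ the tautological connecting map of~\eqref{eqn:s1 equivariant tautological}:
\begin{center}
\begin{tikzcd}
H^{\s}_{*+\dim M-1}(\mathcal{L}M)\arrow[r,"B_{0*}"]\arrow[d,"f"] & H_{*+\dim M}(\mathcal{L}M)\arrow[r,"i_*"]\arrow[d,equal,"\mathrm{id}"] & H^{\s}_{*+\dim M}(\mathcal{L}M)\arrow[d,"f"]\\
G^{\s}_{*+\dim M}(\mathcal{L}M)\arrow[r,"p_{0*}"] & H_{*+\dim M}(\mathcal{L}M)\arrow[r,"J_*"] & G^{\s}_{*+1+\dim M}(\mathcal{L}M)
\end{tikzcd}
\end{center}
Commutativity of the left square, $p_{0*}\circ f=B_{0*}$, is the right square of Lemma~\ref{lem:cyclic exact seq gysin morphism}, and commutativity of the right square, $f\circ i_*=J_*$, is its left square; both are read off in homological grading for the $\s$-space $\mathcal{L}M$ after shifting by $\dim M$, with the tautological $B_{0*}$ playing the role of $f$. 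Lemma~\ref{lem:BV induce gravity morphism} then yields that $f$ is a gravity algebra homomorphism, which is the asserted statement.

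The only genuinely delicate point is the degree bookkeeping: I must track the shift by $\dim M$, confirm that each $\alpha,\beta$ lands in the degree demanded by Lemma~\ref{lem:BV induce gravity}, and match the two squares above to the correct squares of Lemma~\ref{lem:cyclic exact seq gysin morphism} (so that the \emph{tautological} connecting map is identified with $f$, while the Gysin-sequence maps $B_{0*},i_*,p_{0*},B_*=J_*$ play the roles of the structure maps $\alpha,\beta$). Once this identification is in place, every verification reduces to exactness of the two Gysin sequences together with the cochain-level identities already established in Lemma~\ref{lem:cyclic exact seq gysin morphism}, so no further computation is required.
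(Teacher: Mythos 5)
Your proof is correct and follows exactly the route the paper intends: the paper's entire justification is the sentence preceding the lemma, which cites the same three ingredients you use (Lemma \ref{lem:BV induce gravity} applied to the negative Connes--Gysin sequence \eqref{eqn:cyclic exact seq gysin [[u]]} for $(S_*(\mathcal{L}M),\partial,J)$, plus Lemma \ref{lem:cyclic exact seq gysin morphism} and Lemma \ref{lem:BV induce gravity morphism}). Your identification of $\alpha=p_{0*}$, $\beta=B_*=J_*$ on the negative side and of the tautological connecting map with $f$ via the two squares of Lemma \ref{lem:cyclic exact seq gysin morphism} matches the paper's setup as recorded in Proposition \ref{prop:cyclic operad induce structure}(iv).
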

	
	\begin{exmp}[Irie's construction \cite{Irie loop}] \label{exmp:Irie operad} Given any closed oriented $C^\infty$-manifold $M$, one can associate to $M$ a ns cyclic dg operad $(\mathcal{O}_M,(\tau_k)_{k\geq0},\mu,\varepsilon)$ with a multiplication and a unit, defined by:
		\begin{itemize}
			\item For each $k\in\mathbb{Z}_{\geq0}$, $(\mathcal{O}_M(k)_*,\partial):=\big(C^{\mathrm{dR}}_{*+\dim M}(\mathscr{L}^M_{k+1,\mathrm{reg}}),\partial\big)$.
			\item For each $k\in\mathbb{Z}_{\geq1},k^\prime\in\mathbb{Z}_{\geq0}$ and $j\in\{1,\dots,k\}$, the partial composition
			$
			\circ_j:\mathcal{O}_M(k)\ot \mathcal{O}_M(k^\prime)\to\mathcal{O}_M(k+k^\prime-1)$
			is defined by \begin{equation*}x\circ_j x^\prime:=(\mathrm{con}_j)_*(x\fd{\mathrm{ev}_j}{\mathrm{ev}_0} x^\prime),
			\end{equation*}
			where ${}\fd{\mathrm{ev}_j}{\mathrm{ev}_0}{}$ is the fiber product of de Rham chains with respect to evaluation maps $\mathrm{ev}_j:\mathscr{L}^M_{k+1,\mathrm{reg}}\to M_{\mathrm{reg}}$ and $\mathrm{ev}_0:\mathscr{L}^M_{k^\prime+1,\mathrm{reg}}\to M_{\mathrm{reg}}$ (it is well-defined because of submersive condition), and $\mathrm{con}_j:\mathscr{L}_{k+1}M\fd{\mathrm{ev}_j}{\mathrm{ev}_0} \mathscr{L}_{k^\prime+1}M\to\mathscr{L}_{k+k^\prime}M$ is the concatenation map defined by inserting the second loop into the first loop at the $j$-th marked point.
			\item For each $k\in\mathbb{Z}_{\geq0}$, $\tau_k:\mathcal{O}_M(k)_*\to\mathcal{O}_M(k)_*$ is induced by \eqref{eqn:cyclic structure on loop}.
			\item $1_{\mathcal{O}_M}:=(M,i_1,1)\in\mathcal{O}_M(1)_0$, $\mu:=(M,i_2,1)\in\mathcal{O}_M(2)_0$, $\varepsilon:=(M,i_0,1)\in\mathcal{O}_M(0)_0$. Here for $k\geq0$, $i_k:M\to\mathscr{L}_{k+1}M$ is the map $p\mapsto (0,\gamma_p,0,\dots,0)$, where $\gamma_p$ is the constant loop of length 0 at $p\in M$.
		\end{itemize}
		By \cite[Theorem 3.1(ii)]{Irie loop}, there is an isomorphism $H_*(\tilde{\mathcal{O}}_M,b)\cong \mathbb{H}_*(\mathcal{L}M)$ 
		of BV algebras, where these BV algebra structures are from Proposition \ref{prop:cyclic operad induce structure} and Example \ref{exmp:string topology gravity}, respectively. (The crucial thing about $\tilde{\mathcal{O}}_M$ is the chain level structure which refines the string topology BV algebra, but we do not need to use it.)
		
		Let $(\Omega(M)^*,d,\wedge)$ be the dg algebra of differential forms on $M$. For each $k\geq0$, there is a chain map $I_k:C^{\mathrm{dR}}_{*+\dim M}(\mathscr{L}^M_{k+1,\mathrm{reg}})\to \Hom^{-*}(\Omega(M)^{\ot k},\Omega(M))$, called \emph{iterated integral of differential forms}: for $\eta_1,\dots,\eta_k\in\Omega(M)$,
		\begin{equation*}
			I_k(U,\vphi,\omega)(\eta_1\ot\cdots\ot\eta_k):=(-1)^{\varepsilon_0}(\vphi_0)_!(\omega\wedge\vphi_1^*\eta_1\wedge\cdots\wedge\vphi_k^*\eta_k),
		\end{equation*}
		where $\varepsilon_0:=(\dim U-\dim M)(|\eta_1|+\cdots+|\eta_k|)$ and $\vphi_j:=\mathrm{ev}_j\circ\vphi$ ($0\leq j\leq k$). Moreover, $I=(I_k)_{k\geq0}:\mathcal{O}_M\to \mathcal{E}nd_{\Omega(M)}$ is a morphism of ns dg operads preserving multiplications and units (\cite[Lemma 8.5]{Irie loop}). 
	\end{exmp}
	The paring $\langle\alpha,\beta\rangle:=\int_M\alpha\wedge\beta$ is a graded symmetric bilinear form on $\Omega^*(M)$ of degree $m=-\dim M$, in line with Example \ref{exmp:end operad}. The induced dg $\Omega(M)$-bimodule map $\theta:\Omega^*(M)\to(\Omega(M)^\vee[-\dim M])^*$ is a quasi-isomorphism by Poincar\'e duality, hence induces a quasi-isomorphism
	\begin{equation}\label{eqn:Hochschild pairing differential form}
		\Theta:\CH(\Omega(M),\Omega(M))\xrightarrow{\simeq}\CH(\Omega(M),\Omega(M)^\vee[-\dim M]).
	\end{equation}

	\begin{lem}\label{lem:iterated integral cocyclic}
		The composition
		\begin{equation}\label{eqn:iterated integral composition}
			\theta\circ I_k:\mathcal{O}_M(k)_*\to\Hom^{-*}(\Omega(M)^{\ot k},\Omega(M)^\vee[-\dim M])\hspace{.5em}(k\geq0)
		\end{equation}
		is a morphism of cocyclic complexes. 
	\end{lem}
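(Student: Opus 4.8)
The plan is to verify separately that $\theta\circ I_k$ respects the cosimplicial operators $\delta_i,\sigma_i$ and that it commutes with the cyclic operators $\tau_k$; the former is formal and the latter carries all the content. For the cosimplicial part, recall from Example~\ref{exmp:Irie operad} that $I=(I_k):\mathcal{O}_M\to\mathcal{E}nd_{\Omega(M)}$ is a morphism of ns dg operads preserving the multiplication $\mu$ and unit $\varepsilon$. By Example~\ref{exmp:end operad}, on both $\mathcal{O}_M$ and $\mathcal{E}nd_{\Omega(M)}$ the maps $\delta_i,\sigma_i$ are the operadic ones \eqref{eqn:operad cosimplicial}, built solely from $\circ_i$, $\mu$ and $\varepsilon$; hence each $I_k$ is a chain map commuting with all $\delta_i$ and $\sigma_i$. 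Post-composition with $\theta$ is the map $\Hom(-,\theta)$, which is a morphism of cosimplicial complexes precisely because $\theta$ is a dg $\Omega(M)$-bimodule map (Example~\ref{exmp:end operad}). Composing, $\theta\circ I_k$ is a chain map commuting with every $\delta_i$ and $\sigma_i$.

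The essential point is commutation with $\tau_k$, and here the presence of $\theta$ (equivalently, the pairing $\langle,\rangle$) is indispensable: $I_k$ alone sees no cyclic structure. I would unwind both sides via the adjunction $\Hom(\Omega(M)^{\ot k},\Omega(M)^\vee[-\dim M])\cong\Hom(\Omega(M)^{\ot k+1},\mathbb{R})$, so that $(\theta\circ I_k)(x)$ is the functional $\eta_1\ot\cdots\ot\eta_{k+1}\mapsto\langle I_k(x)(\eta_1\ot\cdots\ot\eta_k),\eta_{k+1}\rangle$. On the loop-space side $\tau_k$ is induced by \eqref{eqn:cyclic structure on loop}, and a direct check gives $\mathrm{ev}_j\circ\tau_k=\mathrm{ev}_{j+1}$ for $0\leq j\leq k-1$ together with $\mathrm{ev}_k\circ\tau_k=\mathrm{ev}_0$. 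Feeding this into the iterated-integral formula and applying the projection formula $\int_M(\vphi_0)_!(\alpha)\wedge\beta=\pm\int_U\alpha\wedge\vphi_0^*\beta$, I would show that $\langle I_k(\tau_k x)(\eta_1\ot\cdots\ot\eta_k),\eta_{k+1}\rangle$ becomes the integral over $U$ of $\omega$ wedged with the pullbacks of $\eta_1,\dots,\eta_{k+1}$ along $\vphi_0,\dots,\vphi_k$ in cyclically rotated slots; the symmetry \eqref{eqn:<> cyclic invariant} of $\langle,\rangle$ then identifies this with $\langle I_k(x)(\eta_{k+1}\ot\eta_1\ot\cdots\ot\eta_{k-1}),\eta_k\rangle$, which is precisely the value of $\tau_k((\theta\circ I_k)(x))$ on $\eta_1\ot\cdots\ot\eta_{k+1}$. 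The geometric identity $\mathrm{ev}_j\circ\tau_k=\mathrm{ev}_{j+1}$ is exactly what makes the rotation of marked points correspond to the Hochschild cyclic permutation of the forms.

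The hard part is not the underlying integral identity but the sign bookkeeping: one must reconcile the Koszul signs from permuting the wedge factors, the sign $(-1)^{\varepsilon_0}$ in the definition of $I_k$, the sign in the projection formula, the sign $(-1)^{\varepsilon}$ in the Hochschild operator $\tau_k$, and the degree shift by $m=-\dim M$ coming from $\Omega(M)^\vee[-\dim M]$. To keep this under control I would pass to $A[1]=\Omega(M)[1]$ as in Section~\ref{section:cyclic brace operations}, where $\langle,\rangle$ becomes anti-symmetric and the cyclic operator reads $\lambda=(-1)^k\tau_k$; this folds the awkward signs into the suspension convention, after which a single careful pass through the reindexing that sends the $i$-th argument to the $(i-1)$-th (mod $k+1$) completes the verification.
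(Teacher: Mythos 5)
Your proof is correct and follows the same route as the paper: the paper likewise observes that $\{\theta\circ I_k\}_k$ is a composition of cosimplicial maps (so only the $\tau_k$-compatibility needs checking) and then dismisses that compatibility as ``a simple computation by definition,'' which is precisely the computation you sketch via $\mathrm{ev}_j\circ\tau_k=\mathrm{ev}_{j+1}$, the projection formula, and the cyclic symmetry \eqref{eqn:<> cyclic invariant}. Your version simply spells out the details the paper leaves implicit.
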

	\begin{proof}
		The composition $\{\theta\circ I_k\}_k$ is one of cosimplicial maps, so it suffices to check $\tau_k\circ\theta\circ I_k=\theta\circ I_k\circ\tau_k$, which is a simple computation by definition.
	\end{proof}
	According to Lemma \ref{lem:iterated integral cocyclic}, $\Theta\circ I:\tilde{\mathcal{O}}_M\to\CH(\Omega(M),\Omega(M)^\vee[-\dim M])$ preserves cyclic invariants. Moreover, the following is true.
	\begin{lem}\label{lem:iterated integral cyclic brace morphism}
		The chain map
		\begin{equation}\label{eqn:iterated integral cyclic brace algebra}
			I:(\tilde{\mathcal{O}}_M^{\mathrm{cyc}})_*\to \Theta^{-1}(\CH^{-*}_{\mathrm{cyc}}(\Omega(M),\Omega(M)^\vee[-\dim M]))
		\end{equation}
		is a morphism of $\mathsf{M}_{\circlearrowleft}$-algebras.
	\end{lem}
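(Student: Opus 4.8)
The plan is to reduce the statement to the single fact that $I$ is a morphism of ns dg operads preserving the multiplication, by observing that both $\mathsf{M}_{\circlearrowleft}$-actions are assembled from the same universal ingredients.

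Recall from the proof of Corollary \ref{cor:homotopy gravity action} that the $\mathsf{M}_{\circlearrowleft}$-action on each side is obtained by pulling back an $\mathsf{M}$-action along the operad morphism $\mathsf{M}_{\circlearrowleft}\to\mathsf{M}$ extending $\mathsf{B}_{\circlearrowleft}\to\mathsf{B}$ of \eqref{eqn:tree add root}. Here $\mathsf{M}$ is generated as a dg operad by its brace suboperad $\mathsf{B}$ together with the binary operation $(f,g)\mapsto\mu\{f,g\}$ built from the multiplication $\mu$. Both the source $\tilde{\mathcal{O}}_M$ and the target $\mathrm{CH}(\Omega(M),\Omega(M))=\widetilde{\mathcal{E}nd_{\Omega(M)}}$ carry such an $\mathsf{M}$-action: the brace part via $\kappa$ of \eqref{eqn:brace action}, and the extra part via the respective multiplications $\mu_{\mathcal{O}_M}$ and $\mu_{\Omega(M)}$ (the wedge product). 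Thus it suffices to check that the induced total map $I\colon\tilde{\mathcal{O}}_M\to\mathrm{CH}(\Omega(M),\Omega(M))$ intertwines these two classes of generating operations.

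First I would verify that $I$ commutes with all brace operations. Since $I$ is a morphism of ns operads it commutes with every partial composition $\circ_i$, while the brace operation $\kappa_n(T)$ is expressed purely through iterated $\circ_i$'s and the signs coming from operadic suspension (Definition \ref{defn:brace operation formula} and Definition \ref{defn:brace operation tree}); operadic suspension being functorial, $I$ respects those signs as well, so $I(\kappa_n(T)(x_1,\dots,x_n))=\kappa_n(T)(I(x_1),\dots,I(x_n))$. Second, because $I$ preserves multiplications (Irie \cite[Lemma 8.5]{Irie loop}) we have $I(\mu_{\mathcal{O}_M})=\mu_{\Omega(M)}$, and hence $I$ also commutes with the remaining generator $(f,g)\mapsto\mu\{f,g\}$. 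Together these show that $I$ is a morphism of $\mathsf{M}$-algebras.

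It remains to descend to the cyclic-invariant subspaces and to pass from $\mathsf{M}$ to $\mathsf{M}_{\circlearrowleft}$. By Lemma \ref{lem:iterated integral cocyclic} the maps $\theta\circ I_k$ commute with the cyclic operators $\tau_k$, so $I$ carries a genuine cyclic invariant $x\in\tilde{\mathcal{O}}_M^{\mathrm{cyc}}$ to a weakly cyclic invariant, that is $I(x)\in\Theta^{-1}(\mathrm{CH}_{\mathrm{cyc}}(\Omega(M),\Omega(M)^\vee[-\dim M]))$; this recovers the map \eqref{eqn:iterated integral cyclic brace algebra}. The $\mathsf{M}_{\circlearrowleft}$-actions on the two subspaces are, by construction, the restrictions of the ambient $\mathsf{M}$-actions pulled back along the same morphism $\mathsf{M}_{\circlearrowleft}\to\mathsf{M}$ (Proposition \ref{prop:cyclic operad induce structure}(v) on the source and Corollary \ref{cor:homotopy gravity action} on the target, agreeing with the cyclic brace operations of Example \ref{exmp:cyclic brace algebra} in arity two). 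Since $I$ is an $\mathsf{M}$-algebra morphism carrying one subspace into the other, its restriction is automatically a morphism of $\mathsf{M}_{\circlearrowleft}$-algebras. The one point demanding care is the sign bookkeeping: one must check that the operadic-suspension signs built into the brace operations are genuinely intertwined by $I$. Conceptually, the only thing to be convinced of is that both $\mathsf{M}_{\circlearrowleft}$-structures are literally restrictions of a single $\mathsf{M}$-structure, after which intertwining over $\mathsf{M}$ forces intertwining over $\mathsf{M}_{\circlearrowleft}$; since $I$ respects exactly the operadic compositions and the multiplication that generate $\mathsf{M}$, no genuine obstacle remains.
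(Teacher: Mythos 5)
Your proposal is correct and follows essentially the same route as the paper: the paper's proof likewise observes that the $\mathsf{B}_{\circlearrowleft}$-part is automatic because $I$ is a morphism of ns operads and (cyclic) brace operations are built from partial compositions, and then invokes the proof of Corollary \ref{cor:homotopy gravity action} to reduce the passage from $\mathsf{B}_{\circlearrowleft}$ to $\mathsf{M}_{\circlearrowleft}$ to the single check $I_2(\mu)=\wedge$, which holds by \cite[Lemma 8.5]{Irie loop}. Your version merely spells out the intermediate $\mathsf{M}$-algebra structure and the restriction to (weakly) cyclic invariants in more detail.
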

	\begin{proof}
		First, \eqref{eqn:iterated integral cyclic brace algebra} is a morphism of $\mathsf{B}_{\circlearrowleft}$-algebras since $I:\mathcal{O}_M\to\mathcal{E}nd_{\Omega(M)}$ is a morphism of ns operads, and the (cyclic) brace operations on the associated complexes are defined using operadic compositions. Then by the proof of Corollary \ref{cor:homotopy gravity action}, to show \eqref{eqn:iterated integral cyclic brace algebra} is a morphism of $\mathsf{M}_{\circlearrowleft}$-algebras, it suffices to show $I_2(\mu)=\wedge$, where $\mu=(M,i_2,1)\in \mathcal{O}_M(2)$ is the multiplication in $\mathcal{O}_M$. But this is obvious from definition.
	\end{proof}
	\begin{thm}\label{thm:chain level gravity string topology}
		For any closed oriented $C^\infty$-manifold $M$, the ns dg operad $\mathcal{O}_M$ with $(\tau_k)_{k\geq0}$, $\mu$, $\varepsilon$ in Example \ref{exmp:Irie operad} gives rise to the following data:
		\begin{enumerate}
			\item \label{item:thm chain level 1}A chain complex $\tilde{\mathcal{O}}^{\mathrm{cyc}}_M:=\mathrm{Ker}(1-\lambda)\subset\tilde{\mathcal{O}}_M$ which is an algebra over $\mathsf{M}_{\circlearrowleft}$. In particular, $H_*(\tilde{\mathcal{O}}^{\mathrm{cyc}}_M)$ carries a gravity algebra structure.
			\item \label{item:thm chain level 2}An isomorphism $H_*(\tilde{\mathcal{O}}^{\mathrm{cyc}}_M)\cong G^{S^1}_{*+\dim M}(\mathcal{L}M)$ of gravity algebras, where the gravity algebra structure on $G^{S^1}_{*+\dim M}(\mathcal{L}M)$ is as in Lemma \ref{lem:gravity algebra negative s1}.
			\item \label{item:thm chain level 3}A morphism $I:(\tilde{\mathcal{O}}^{\mathrm{cyc}}_M)_{*}\to\Theta^{-1}(\CH_{\mathrm{cyc}}^{-*}(\Omega(M),\Omega(M)^\vee[-\dim M]))$ of $\mathsf{M}_{\circlearrowleft}$-algebras, such that the induced map in homology 
			fits into the following commutative diagram of gravity algebra homomorphisms:
			\begin{equation}\label{eqn:iterated integral gravity}			
				\begin{tikzcd}				
					H^{S^1}_{*+\dim M-1}(\mathcal{L}M)\arrow[r,"1"]\arrow[dd,"2"] & \HC^{-*+1}_{[u^{-1}]}(\CH(\Omega(M),\Omega(M)^\vee[-\dim M]))\arrow[d,"3"] \\
					& \HC^{-*}_{[[u]]}(\CH(\Omega(M),\Omega(M)^\vee[-\dim M]))  \arrow[d, equal, "\eqref{eqn:cyclic Connes isom}"]\\
					G^{S^1}_{*+\dim M}(\mathcal{L}M)\arrow[r,"4"] & \HC^{-*}_{\lambda}(\CH(\Omega(M),\Omega(M)^\vee[-\dim M])).
				\end{tikzcd}		
			\end{equation}
			Here arrows 1, 4 are induced by \eqref{eqn:iterated integral composition}, arrow 2 is as in \eqref{eqn:s1 equivariant tautological}, arrow 3 is as in \eqref{eqn:cyclic exact seq tautological}. The gravity algebra structures are those on the (negative) $S^1$-equivariant homology of $\mathcal{L}M$ (Example \ref{exmp:string topology gravity}, Lemma \ref{lem:gravity algebra negative s1}) and on (negative) cyclic cohomology of $\Omega(M)$ (Example \ref{exmp:end operad}) in view of Poincar\'e duality.
		\end{enumerate}
		
		\begin{proof}
			Statement \eqref{item:thm chain level 1} follows from Proposition \ref{prop:cyclic operad induce structure}\eqref{item:statement 5}. Statement \eqref{item:thm chain level 2} follows from Proposition \ref{prop:cyclic operad induce structure}\eqref{item:statement 4} and Proposition \ref{prop:Irie loop s1 equiv}. As for statement \eqref{item:thm chain level 3}, $I$ is defined in Lemma \ref{lem:iterated integral cyclic brace morphism}; Arrows 2, 3 are gravity algebra homomorphisms by Proposition \ref{prop:cyclic operad induce structure}\eqref{item:statement 4} and Example \ref{exmp:end operad}; Arrows 1, 4 are gravity algebra homomorphisms by Lemma \ref{lem:iterated integral cocyclic}, Lemma \ref{lem:cyclic long exact seq naturality} and Lemma \ref{lem:BV induce gravity morphism}; The diagram \eqref{eqn:iterated integral gravity} commutes by Lemma \ref{lem:cyclic long exact seq naturality}. Strictly speaking, since the grading of $\mathcal{O}_M(k)_*$ has been shifted by $\dim M$ from $C^{\mathrm{dR}}_{*}(\mathscr{L}_{k+1,\mathrm{reg}}^M)$, there is a minor sign change for $\delta$ \eqref{eqn:cocyclic complex delta defn} in $\tilde{\mathcal{O}}_M$ compared to $C^{\mathscr{L}}$ (the same thing happens in \cite[Lemma 8.4]{Irie loop}), and thus we should repeat the proof of Proposition \ref{prop:Irie loop s1 equiv} under new signs and use new isomorphisms to make the diagram commute without question of signs, but this is straightforward.
		\end{proof}
	\end{thm}
	
	\begin{rem}
		Theorem \ref{thm:chain level gravity string topology}\eqref{item:thm chain level 1} is an easy combination of work of Irie and Ward, so it is not new. But it was not known before whether the chain level structures in statement \eqref{item:thm chain level 1} correctly fit with known homology level structures in string topology (it was not even known whether $H_*(\tilde{\mathcal{O}}^{\mathrm{cyc}}_M)$ is isomorphic to the $S^1$-equivariant homology of $\mathcal{L}M$), so statement \eqref{item:thm chain level 2} is new. As for statement \eqref{item:thm chain level 3}, some (perhaps not all) of the homology level statements are known, see the discussion after Theorem \ref{thm:main result 3}; the chain level statement is more crucial, and is new because the result that $\mathsf{M}_{\circlearrowleft}$ (nontrivially) acts on $\Theta^{-1}(\CH_{\mathrm{cyc}}^{-*}(\Omega(M),\Omega(M)^\vee[-\dim M]))$ is new (Corollary \ref{cor:homotopy gravity action}).
	\end{rem}
	
	\begin{rem}	It is known that if $M$ is simply connected, then the iterated integral map $I:(\tilde{\mathcal{O}}_M)_*\to\CH^{-*}(\Omega(M),\Omega(M))$ is a quasi-isomorphism  (proved by K. T. Chen \cite{Chen iterated integral} and improved by Getzler-Jones-Petrack \cite{GJP}). In this case Lemma \ref{lem:mixed complex quasi} implies that arrows 1, 4 in \eqref{eqn:iterated integral gravity} are isomorphisms of gravity algebras. 
	\end{rem}   	
	
	Note that arrow 4 in \eqref{eqn:iterated integral gravity} is not exactly induced by $I$, but is the composition $\Theta_*\circ I_*$. The author does not know an answer to the following question.
	\begin{conj}\label{conj:weakly cyclic invariants homology}
		For any closed oriented $C^\infty$-manifold $M$, the quasi-isomorphism \eqref{eqn:Hochschild pairing differential form} restricts to a quasi-isomorphism on (weakly) cyclic invariants,
		\begin{equation*}
			\Theta^{-1}(\CH^{-*}_{\mathrm{cyc}}(\Omega(M),\Omega(M)^\vee[-\dim M]))\xrightarrow{\simeq}\CH^{-*}_{\mathrm{cyc}}(\Omega(M),\Omega(M)^\vee[-\dim M]).
		\end{equation*}
	\end{conj}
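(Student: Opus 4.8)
The plan is to first peel the conjecture down to a comparison of cofibers, and then to attack that comparison by transporting the Poincaré-duality structure to a finite-dimensional model where the pairing is genuinely nondegenerate. Throughout write $A:=\Omega(M)$ and $m:=-\dim M$, set $B:=\mathrm{CH}(A,A^\vee[m])$ with Connes' operator $\lambda$ and $\mathrm{CH}_{\mathrm{cyc}}:=\mathrm{Ker}(1-\lambda)\subset B$, and put $W:=\Theta^{-1}(\mathrm{CH}_{\mathrm{cyc}})$. Since the Poincar\'e pairing is nondegenerate already on forms, $\theta$ is injective, hence so is $\Theta$, which is moreover a quasi-isomorphism by \eqref{eqn:Hochschild pairing differential form}. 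First I would compare the two short exact sequences of complexes
$$0 \to W \to \mathrm{CH}(A,A) \to \mathrm{CH}(A,A)/W \to 0, \qquad 0 \to \mathrm{CH}_{\mathrm{cyc}} \to B \to B/\mathrm{CH}_{\mathrm{cyc}} \to 0,$$
between which $\Theta$ maps, inducing the isomorphism $\mathrm{HH}^*(A,A)\cong\mathrm{HH}^*(A,A^\vee[m])$ on middle terms. By the five lemma applied to the associated long exact sequences, the conjecture ($\Theta|_W$ a quasi-isomorphism) is \emph{equivalent} to the inclusion of cofibers $\mathrm{CH}(A,A)/W\hookrightarrow B/\mathrm{CH}_{\mathrm{cyc}}$ being a quasi-isomorphism. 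Composing with the isomorphism $(B/\mathrm{CH}_{\mathrm{cyc}},b)\xrightarrow{1-\lambda}(\mathrm{Im}(1-\lambda),b')$ coming from \eqref{eqn:Connes short exact seq acyclic}, this is in turn equivalent to
$$\big((1-\lambda)\,\Theta(\mathrm{CH}(A,A)),\,b'\big)\hookrightarrow\big(\mathrm{Im}(1-\lambda),\,b'\big)$$
being a quasi-isomorphism. This is the clean reformulation I would target, since it measures exactly the failure of the image of $\Theta$ to be $\lambda$-stable.

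The main strategy is to exploit that although $\theta$ is not an isomorphism, the Poincar\'e pairing \emph{is} nondegenerate on cohomology: $H^*(M)$ is a finite-dimensional Frobenius algebra. Using a cyclically equivariant homotopy transfer over $\mathbb{R}$ (harmonic forms for a Riemannian metric give a splitting compatible with $\int_M\alpha\wedge\beta$ via the Hodge star), $\Omega(M)$ should be connected by a \emph{cyclic $A_\infty$-quasi-isomorphism} to a finite-dimensional cyclic $A_\infty$-algebra $(H,\{m_k\},\langle,\rangle)$ with $H\cong H^*(M)$ and \emph{nondegenerate} $\langle,\rangle$. For such $H$ the induced $\theta_H\colon H\to H^\vee[m]$ is an isomorphism, so $\Theta_H$ is an isomorphism of complexes, weakly cyclic invariants coincide with honest cyclic invariants, and the analogue of the conjecture for $H$ holds tautologically (this is the regime covered by Ward \cite{Ward}). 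Granting the $A_\infty$-generalization of Proposition \ref{prop:cyclic brace action on CHcyc} indicated at the end of Section \ref{section:cyclic brace operations}, the remaining task is to show that the cyclic $A_\infty$-quasi-isomorphism induces a quasi-isomorphism $\Theta^{-1}(\mathrm{CH}_{\mathrm{cyc}})\to\Theta_H^{-1}(\mathrm{CH}_{\mathrm{cyc}})$ on weakly-cyclic-invariant Hochschild complexes; the conjecture for $\Omega(M)$ would then follow from its triviality for $H$. As independent evidence, for simply connected $M$ the iterated integral $I$ is a quasi-isomorphism (Chen \cite{Chen iterated integral}, Getzler--Jones--Petrack \cite{GJP}), and since $\theta\circ I$ is a morphism of cocyclic complexes (Lemma \ref{lem:iterated integral cocyclic}), $(\theta\circ I)|_{\mathrm{cyc}}\colon\tilde{\mathcal{O}}_M^{\mathrm{cyc}}\to\mathrm{CH}_{\mathrm{cyc}}$ is a quasi-isomorphism by Lemma \ref{lem:mixed complex quasi}; here $\tilde{\mathcal{O}}_M$ carries an \emph{honest} cyclic structure, so $\Theta|_W$ a quasi-isomorphism is equivalent to $I|_{\mathrm{cyc}}\colon\tilde{\mathcal{O}}_M^{\mathrm{cyc}}\to W$ being one, a formulation that may be more tractable.

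The hard part will be the comparison on weakly-cyclic-invariant complexes. Hochschild cochains are functorial under $A_\infty$-quasi-isomorphisms only in a homotopical sense, via explicit but intricate formulas, and one must arrange these to \emph{preserve} the subcomplexes $\Theta^{-1}(\mathrm{CH}_{\mathrm{cyc}})$. Because $W$ is cut out by the non-surjective map $\theta$, passing to this subcomplex is not an exact operation and does not obviously commute with the transfer maps; controlling it amounts to producing a cyclically equivariant lift of the $A_\infty$-morphism at the level of the $(1-\lambda,N)$-bicomplex, which is precisely the obstruction visible in the reformulation $(1-\lambda)\Theta(\mathrm{CH}(A,A))\hookrightarrow\mathrm{Im}(1-\lambda)$ above. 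A secondary obstacle is ensuring the transferred pairing on $H$ is strictly cyclic and nondegenerate (rather than only so up to homotopy), which may require a perturbation argument. Should the $A_\infty$ route prove intractable, I would fall back on the reformulation, attacking $(1-\lambda)\Theta(\mathrm{CH}(A,A))\hookrightarrow\mathrm{Im}(1-\lambda)$ directly by filtering both sides by Hochschild tensor degree and comparing $E_1$-pages, where I expect the same cohomological nondegeneracy of the Poincar\'e pairing to be the decisive input.
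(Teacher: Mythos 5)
The statement you are addressing is not a theorem of the paper but an open conjecture: immediately before stating it the author explicitly writes that he does not know the answer, so there is no proof in the paper to compare yours against. Read on its own terms, your submission is a programme rather than a proof. The reductions in your first paragraph are correct: $W=\Theta^{-1}(\mathrm{CH}_{\mathrm{cyc}})$ is a subcomplex because $\Theta$ is a cochain map and $\mathrm{CH}_{\mathrm{cyc}}$ is one; the five lemma does convert the conjecture into the statement that the injection of cofibers $\mathrm{CH}(A,A)/W\hookrightarrow B/\mathrm{CH}_{\mathrm{cyc}}$ is a quasi-isomorphism; and composing with the isomorphism $(B/\mathrm{CH}_{\mathrm{cyc}},b)\cong(\mathrm{Im}(1-\lambda),b^\prime)$ from the proof of Lemma \ref{lem:Connes exact seq coincide} yields your inclusion $(1-\lambda)\Theta(\mathrm{CH}(A,A))\hookrightarrow\mathrm{Im}(1-\lambda)$. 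Likewise, for simply connected $M$, the equivalence of the conjecture with $I|_{\mathrm{cyc}}:\tilde{\mathcal{O}}_M^{\mathrm{cyc}}\to W$ being a quasi-isomorphism is a legitimate consequence of Lemma \ref{lem:iterated integral cocyclic}, Lemma \ref{lem:mixed complex quasi} and the naturality of \eqref{eqn:cyclic Connes isom}. But these are restatements, not resolutions.

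The genuine gap sits exactly where you flag it, and it is not a technicality to be dispatched later. The subcomplex $W$ is a strict preimage under the non-surjective map $\Theta$; such preimages are not homotopy-invariant, and a cyclic $A_\infty$-quasi-isomorphism from $\Omega(M)$ to a finite-dimensional model $H$ (itself a nontrivial input) gives no induced map, let alone a quasi-isomorphism, between $\Theta^{-1}(\mathrm{CH}_{\mathrm{cyc}})$ and $\Theta_H^{-1}(\mathrm{CH}_{\mathrm{cyc}})$: the transfer formulas for Hochschild cochains are cochain maps only up to coherent homotopy, and the defining condition $\lambda(\Theta f)=\Theta f$ of $W$ is a strict, not a homotopy, condition, so there is no reason the homotopies respect it. This is the same obstruction, in different clothing, as your reformulation $(1-\lambda)\Theta(\mathrm{CH}(A,A))\hookrightarrow\mathrm{Im}(1-\lambda)$: the left-hand side depends on the actual image of $\Theta$, which quasi-isomorphism invariance does not control. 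Your fallback of filtering by tensor degree meets the same wall, since the $E_1$-comparison still requires identifying the image of $1-\lambda$ on $\Theta(\mathrm{Hom}(\Omega(M)^{\ot k},\Omega(M)))$ inside $\mathrm{Im}(1-\lambda)$, and convergence of the resulting spectral sequences on product total complexes is itself an issue. In short: your reductions are sound and potentially useful, but the decisive step is absent, and the statement remains, as in the paper, a conjecture.
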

	
	\appendix
	\section{Sign rules}\label{section:sign rule}
	\subsection{Koszul sign rule}\label{subsection:Koszul sign rule} Compared to ungraded formulas, a sign $(-1)^{|a||b|}$ is produced in graded setting whenever a symbol $a$ travels across another symbol $b$. For example if $A,B$ are graded vector spaces, the graded tensor product of graded linear maps $f:A\to B$ and $g:C\to D$, $f\otimes g:A\otimes C\to B\otimes D$, is defined by $(f\otimes g)(v\otimes w)=(-1)^{|g||v|}f(v)\otimes g(w)$.
	
	\subsection{Sign change rule for (de)suspension}\label{subsection:sign change rule for suspension} Let $C=\{C^i\}_{i\in\mathbb{Z}}$ be a graded vector space. For any $n\in\mathbb{Z}$, define a shifted graded vector space $C[n]=\{C[n]^i\}_{i\in\mathbb{Z}}$ by $C[n]^i:=C^{i+n}$. (In homological grading this turns into $C[-n]_{-i}:=C_{-i-n}$.) $C[-1]$ is often denoted by $\Sigma C$ and is called the suspension of $C$. Let $s:C\to C[-1]$; $x\mapsto sx$ be the shifted identity map which is of degree $1$. By the Kozsul sign rule, for $x_1,\dots,x_k\in C$,
	\begin{equation}\label{eqn:sign suspension}
		s^{\ot k}(x_1\ot\cdots\ot x_k)=(-1)^{\sum_{i=1}^k(k-i)|x_i|}sx_1\ot\cdots\ot sx_k.
	\end{equation}
	Here $|x_i|$ denotes the degree of $x_i$ in $C$, and the sign $(-1)^{(k-i)|x_i|}$ comes from exchanging positions of $k-i$ copies of $s$ with that of $x_i$. The sign change \eqref{eqn:sign suspension} identifies the graded exterior algebra of $C$ with the graded symmetric algebra of $C[-1]$, as	
	\begin{equation*}	
		(s\ot s)((-1)^{|x_1||x_2|}x_2\ot x_1)	=(-1)^{|x_1|}((-1)^{1+|sx_1||sx_2|}sx_2\ot sx_1).
	\end{equation*}
	The same rule applies to the sign change between $A$ and $A[1]$, the desuspension of $A$.
	\subsection{Operadic suspension}\label{subsection:operadic suspension} Let $(\mathcal{O},\circ_i)$ be a dg operad in cohomological grading. The operadic suspension of $\mathcal{O}$ is a dg operad $(\mathfrak{s}\mathcal{O})$ with partial compositions $\tilde{\circ}_i$ satisfying
	\begin{align}\nonumber
		&\mathfrak{s}\mathcal{O}(n)=\mathcal{O}(n)[1-n],\\
		&a\tilde{\circ}_i b=(-1)^{(i-1)(m-1)+(n-1)|b;\mathcal{O}(m)|},\label{eqn:operadic suspension}
	\end{align}
	where $a\in \mathfrak{s}\mathcal{O}(n)$, $b\in\mathfrak{s}\mathcal{O}(m)$, $|b;\mathcal{O}(m)|$ is the degree of $b$ in $\mathcal{O}(m)$. For an explanation of signs (which comes from Koszul sign rule), see e.g. \cite[Section 2.5.4]{Irie loop}. 
	
	When $\mathcal{O}=\mathcal{E}nd_A$ is the endomorphism operad of a dg algebra $A$, there is an isomorphism of dg operads $\mathfrak{s}\mathcal{O}=\mathfrak{s}(\mathcal{E}nd_A)\cong\mathcal{E}nd_{A[1]}$. Therefore, for signs related to $\mathfrak{s}(\mathcal{E}nd_A)$, one may alternatively use Koszul sign rule for $A[1]$ and perform \eqref{eqn:sign suspension} when necessary.
	
	\subsection{Cyclic permutation}\label{subsection:cyclic permutation}
	If $(\mathcal{O},\circ_i)$ is a cyclic dg operad
	, then $(\mathfrak{s}\mathcal{O},\tilde{\circ}_i)$ also carries a cyclic structure where $\tilde{\tau}_k=(-1)^k\tau_k$ under the naive identification $\mathfrak{s}\mathcal{O}(k)=\mathcal{O}(k)$. On the other hand, let $A$ be a dg algebra, consider the cocyclic complex $\{\Hom(A^{\ot k+1},\mathbb{R}),\tau_k\}$ and the operation $\tilde{\tau}_k$ on $\Hom(A[-1]^{\ot k+1},\mathbb{R})$ induced by $\tau_k$ under the linear isomorphism $s:A\to A[-1]$. Then the following equality says $\tilde{\tau}_k=(-1)^k\tau_k$ after sign change \eqref{eqn:sign suspension}:
	\begin{equation*}
		\tilde{\tau}_k\circ s^{\ot k+1}=\tilde{\tau}_k\circ (s^{\ot k}\ot s)=(-1)^k(s\ot s^{\ot k})\circ\tau_k,
	\end{equation*}
	where $s^{\ot k}$ applies to $x_1\ot\cdots\ot x_k\in A^{\ot k}$ and $s$ applies to $x_{k+1}\in A$.
	Therefore, when discussing cyclic homology theories of $A$ under the naive identification $A=A[-1]$, the subspace of cyclic invariants in $C(k)$ is $\mathrm{Ker}(1-\lambda)=\mathrm{Ker}(1-\tilde{\tau}_k)$ and the operator $N$ satisfies $N|_{C(k)}=\sum_{i=0}^k\lambda^i=\sum_{i=0}^k\tilde{\tau}_k^i$.


\begin{thebibliography}{99}
		\bibitem{ATZ}
		H.~Abbaspour, T.~Tradler and M.~Zeinalian, Algebraic string bracket as a Poisson bracket. \emph{J. Noncommut. Geom.} \textbf{4} (2010), no.~3, 331--347 
		
		\bibitem{ACD}
		A.~Adem, R.~L. Cohen and W.~G. Dwyer, Generalized Tate homology, homotopy fixed points and the transfer. \emph{Algebraic topology (Evanston, IL, 1988)}, 1--13, Contemp. Math., 96, Amer. Math. Soc. 
		
		\bibitem{Chas-Sullivan string topology}
		M.~Chas and D.~Sullivan, String topology. arxiv.org/abs/math/9911159, 1999
		
		\bibitem{Chas-Sullivan gravity}
		M.~Chas and D.~Sullivan, Closed string operators in topology leading to Lie bialgebras and higher string	algebra. In \emph{The legacy of Niels Henrik Abel}, pp. 771--784, Springer-Verlag, Berlin, 2004 
		
		\bibitem{Chen iterated integral}
		K.~T. Chen, Iterated integrals of differential forms and loop space homology. \emph{Ann. of Math. (2)} \textbf{97} (1973), 217--246
		
		\bibitem{Chen differentiable space}
		K.~T. Chen, On differentiable spaces. In \emph{Categories in continuum physics (Buffalo, N.Y., 1982)}, 38--42. Lecture Notes in Mathematics, vol 1174. Springer-Verlag, Berlin, 1986
		
		\bibitem{Chen string}
		X.~Chen, An algebraic chain model of string topology. \emph{Trans. Amer. Math. Soc.} \textbf{364} (2012), no.~5, 2749--2781
		
		\bibitem{CV cyclic}
		K.~Cieliebak and E.~Volkov, Eight flavors of cyclic homology. \emph{Kyoto J. Math.} \textbf{61} (2021), no.~2, 495--541
		
		\bibitem{DW}
		V.~Dolgushev and T.~Willwacher, Operadic twisting -- with an application to Deligne’s conjecture. \emph{J. Pure Appl. Algebra} \textbf{219} (2015), no.~5, 1349--1428
		
		\bibitem{Fukaya loop}
		K.~Fukaya, Application of Floer homology of Lagrangian submanifolds to symplectic topology. In \emph{Morse Theoretic Methods in Nonlinear Analysis and in Symplectic Topology}, 231--276. NATO Sci. Ser. II Math. Phys. Chem., 217. Springer, Dordrecht, 2006 
		
		\bibitem{Fukaya cyclic} 
		K.~Fukaya, Cyclic symmetry and adic convergence in Lagrangian Floer theory. \emph{Kyoto J. Math.} \textbf{50} (2010), no.~3, 521--590
		
		\bibitem{Gerstenhaber}
		M.~Gerstenhaber, The cohomology structure of an associative ring. \emph{Ann. of Math. (2}) \textbf{78} (1963), 267--288
		
		\bibitem{GV}
		M.~Gerstenhaber and A.~A. Voronov, Homotopy G-algebras and moduli space operad. \emph{Int. Math. Res. Not.} (1995), no.~3, 141--153
		
		\bibitem{Getzler brace}
		E.~Getzler, Cartan homotopy formulas and the Gauss-Manin connection in cyclic homology. In \emph{Quantum deformations of algebras and their representations (Ramat-Gan, 1991/1992; Rehovot, 1991/1992)}, 65--78. Israel Math. Conf. Proc., 7. Bar-Ilan Univ., Ramat Gan, 1993
		
		\bibitem{Getzler BV}
		E.~Getzler, Batalin-Vilkovisky algebras and two-dimensional topological field theories. \emph{Comm. Math. Phys.} \textbf{159} (1994), no.~2, 265--285
		
		\bibitem{Getzler gravity}
		E.~Getzler, Two-dimensional topological gravity and equivariant cohomology. \emph{Comm. Math. Phys.} \textbf{163} (1994), no.~3, 473--489.
		
		\bibitem{GJP}
		E.~Getzler, J.~D. S. Jones, and S.~Petrack, Differential forms on loop spaces and the cyclic bar complex, \emph{Topology} \textbf{30} (1991), no.~3, 339--371
		
		\bibitem{Goodwillie}
		T.~G. Goodwillie, Cyclic homology, derivations and the free loop space. \emph{Topology} \textbf{24} (1985), no.~2, 187--215
		
		\bibitem{Hatcher}
		A.~Hatcher, \emph{Algebraic Topology}. Cambridge University Press, Cambridge, 2002 
		
		\bibitem{Irie loop}
		K.~Irie, A chain level Batalin-Vilkovisky structure in string topology via de Rham chains. \emph{Int. Math. Res. Not.} (2018), no.~15, 4602--4674 
		
		\bibitem{Jones cyclic}
		J.~D. S. Jones, Cyclic homology and equivariant homology. \emph{Invent. Math.} \textbf{87} (1987), no.~2, 403--423
		
		\bibitem{Kassel}
		C.~Kassel, Cyclic homology, comodules, and mixed complexes. \emph{J. Algebra} \textbf{107} (1987), no.~1, 195--216
		
		\bibitem{K}
		M.~Khalkhali, \emph{Basic noncommutative geometry}, second edition. EMS Ser. Lect. Math., European Mathematical Society (EMS), Zürich, 2013
		
		\bibitem{KS}
		M.~Kontsevich and Y.~Soibelman, Deformations of algebras over operads and the Deligne conjecture. In \emph{Conf\'erence Mosh\'e Flato 1999, Vol. I (Dijon)}, 255--307, Math. Phys. Stud., 21, Kluwer Academic Publishers Group, Dordrecht, 2000 
		
		\bibitem{Loday cyclic book}
		J.-L.~Loday, \emph{Cyclic Homology}. Appendix E by María O. Ronco, Chapter 13 by the author in collaboration with Teimuraz Pirashvili, second edition, Grundlehren Math. Wiss. 301 [Fundamental Principles of Mathematical Sciences], Springer-Verlag, Berlin, 1998 
		
		\bibitem{Loday operad book}
		J.-L.~Loday and B.~Vallette, \emph{Algebraic Operads}. Grundlehren Math. Wiss., 346 [Fundamental Principles of Mathematical Sciences], Springer, Heidelberg, 2012 
		
		\bibitem{MSS operad book}
		M.~Markl, S.~Shnider and J.~Stasheff, \emph{Operads in Algebra, Topology and Physics}. Math. Surveys Monogr., 96, American Mathematical Society, Providence, RI, 2002 
		
		\bibitem{Menichi cyclic}
		L.~Menichi, Batalin-Vilkovisky algebras and cyclic cohomology of Hopf algebras. \emph{K-Theory} \textbf{32} (2004), no.~3, 231--251
		
		\bibitem{Menichi HH BV}
		L.~Menichi, Batalin-Vilkovisky algebra structures on Hochschild cohomology. \emph{Bull. Soc. Math. France} \textbf{137} (2009), no.~2, 277--295
		
		\bibitem{Penkava}
		M.~Penkava, Infinity algebras, cohomology and cyclic cohomology, and infinitesimal deformations. https://arxiv.org/abs/math/0111088, 2001
		
		\bibitem{Ward}
		B.~C. Ward, Maurer-Cartan elements and cyclic operads. \emph{J. Noncommut. Geom.} \textbf{10} (2016), no.~4, 1403--1464 
		
		\bibitem{Weibel}
		C.~A. Weibel, \emph{An Introduction to Homological Algebra}. Cambridge Stud. Adv. Math., 38, Cambridge University Press, Cambridge, 1994
		
		\bibitem{Westerland}
		C.~Westerland, Equivariant operads, string topology, and Tate cohomology. \emph{Math. Ann.} \textbf{340} (2008), no.~1, 97--142
		
		\bibitem{Zhao cyclic}
		J.~Zhao, Periodic symplectic cohomologies. \emph{J. Symplectic Geom.} \textbf{17} (2019), no.~5, 1513--1578 
		
		
		
		
		
		
		
	\end{thebibliography}
\end{document}